\documentclass[12pt,american]{amsart}
\usepackage[T1]{fontenc}
\usepackage[latin9]{inputenc}
\usepackage[letterpaper]{geometry}
\geometry{verbose,tmargin=0.8in,bmargin=0.8in,lmargin=0.8in,rmargin=0.8in}
\usepackage{color}
\usepackage{enumitem}
\usepackage{amstext}
\usepackage{amsthm}
\usepackage{amssymb}
\usepackage{graphicx} 

\makeatletter

\providecommand{\tabularnewline}{\\}

\numberwithin{equation}{section}
\numberwithin{figure}{section}
\numberwithin{table}{section}
\theoremstyle{plain}
\newtheorem{thm}{\protect\theoremname}[section]
  \theoremstyle{plain}
  \newtheorem{prop}[thm]{\protect\propositionname}
  \theoremstyle{definition}
  \newtheorem{example}[thm]{\protect\examplename}
  \theoremstyle{plain}
  \newtheorem{cor}[thm]{\protect\corollaryname}
  \theoremstyle{definition}
  \newtheorem{defn}[thm]{\protect\definitionname}
  \theoremstyle{plain}
  \newtheorem{lem}[thm]{\protect\lemmaname}
  \theoremstyle{plain}
  \newtheorem{conjecture}[thm]{\protect\conjecturename}

\usepackage{tikz}
\usepackage{graphicx}
\usetikzlibrary{trees,positioning,arrows,snakes}
\makeatletter
\def\env@cases{%
  \let\@ifnextchar\new@ifnextchar
  \left\lbrace
  \def\arraystretch{1}%
  \array{@{}l@{\quad}l@{}}}
\makeatother

\setlist{itemsep=0pt,topsep=0pt,parsep=1pt,partopsep=0pt}

\makeatother

\usepackage{babel}
  \providecommand{\conjecturename}{Conjecture}
  \providecommand{\corollaryname}{Corollary}
  \providecommand{\definitionname}{Definition}
  \providecommand{\examplename}{Example}
  \providecommand{\lemmaname}{Lemma}
  \providecommand{\propositionname}{Proposition}
\providecommand{\theoremname}{Theorem}

\begin{document}

\curraddr{Northern Arizona University, Department of Mathematics and Statistics,
Flagstaff, AZ 86011-5717, USA}

\email{dana.ernst@nau.edu}

\email{nandor.sieben@nau.edu}

\keywords{impartial game, maximal subgroup, structure diagram}

\subjclass[2000]{91A46, 20D30}

\title[Impartial achievement and avoidance games for generating finite groups]{Impartial achievement and avoidance games\\
 for generating finite groups}

\author{Dana C. Ernst}

\author{N\'andor Sieben}

\date{\the\month/\the\day/\the\year}
\begin{abstract}
We study two impartial games introduced by Anderson and Harary and
further developed by Barnes. Both games are played by two players
who alternately select previously unselected elements of a finite
group. The first player who builds a generating set from the jointly
selected elements wins the first game. The first player who cannot
select an element without building a generating set loses the second
game. After the development of some general results, we determine
the nim-numbers of these games for abelian and dihedral groups. We
also present some conjectures based on computer calculations. Our
main computational and theoretical tool is the structure diagram of
a game, which is a type of identification digraph of the game digraph
that is compatible with the nim-numbers of the positions. Structure
diagrams also provide simple yet intuitive visualizations of these
games that capture the complexity of the positions. 
\end{abstract}

\maketitle
\global\long\def\gen{\text{\sf GEN}}

\global\long\def\dng{\text{\sf DNG}}

\global\long\def\mex{\operatorname{mex}}

\global\long\def\nim{\operatorname{nim}}

\global\long\def\opt{\operatorname{Opt}}

\global\long\def\nopt{\operatorname{nOpt}}

\global\long\def\pty{\operatorname{pty}}

\global\long\def\type{\operatorname{type}}

\global\long\def\otype{\operatorname{otype}}

\global\long\def\Otype{\operatorname{Otype}}

\global\long\def\spr{\operatorname{spr}}

\global\long\def\ag{\operatorname{Grp}}

\global\long\def\snf{\operatorname{Snf}}

\global\long\def\tsnf{\operatorname{tSnf}}

\global\long\def\diag{\operatorname{diag}}

\section{Introduction}

Anderson and Harary \cite{anderson.harary:achievement} introduced
two impartial games in which two players alternately select previously
unselected elements of a finite group until the group is generated
by the chosen elements. The first player who builds a generating set
from the jointly selected elements wins the achievement game denoted
by $\gen$. The first player who cannot select an element without
building a generating set loses the avoidance game denoted by $\dng$. 

In the original description of the avoidance game given in~\cite{anderson.harary:achievement},
the game ends when a generating set is built. This suggests mis\`ere-play
convention. We want to study both the achievement and the avoidance
game under normal-play convention. So, our version of the avoidance
game does not allow the creation of a generating set, and the game
ends when there are no available moves. Our version of the avoidance
game has the same outcome as the original, and so the difference is
immaterial since Anderson and Harary do not consider game sums. 

The outcome of both games was determined for finite abelian groups
in~\cite{anderson.harary:achievement}. Barnes~\cite{Barnes} provides
criteria for determining the outcome of each game for an arbitrary
finite group. Barnes applies his criteria to determine the outcome
of some of the more familiar finite groups, including abelian, dihedral,
symmetric, and alternating groups, although his analysis is incomplete
for alternating groups in the avoidance game.

Brandenburg studies related games in~\cite{brandenburg:algebraicGames}.
In one of the variations, two players alternate moves, where a move
consists of picking some non-identity element from a finitely generated
abelian group. The game then continues with a group that results by
taking the quotient by the subgroup generated by the chosen element.
The player with the last possible move wins.

The fundamental problem in the theory of impartial combinatorial games
is the determination of the nim-number of the game. This allows for
the calculation of the nim-numbers of game sums and the determination
of the outcome of the games. The major aim of this paper is the development
of some theoretical tools that allow the calculation of the nim-numbers
of the achievement and avoidance games for a variety of familiar groups.

The paper is organized as follows. In Section~\ref{sec:Preliminaries},
we review the basic terminology of impartial games and establish our
notation. We further our general study of avoidance and achievement
games in Sections~\ref{sec:DNG} and \ref{sec:GEN}, respectively.
In particular, we introduce the structure diagram of a game, which
is an identification digraph of the game digraph that is compatible
with the nim-numbers of the positions. Structure diagrams also provide
simple but intuitive visualizations of these games that capture the
complexity of the positions. By making further identifications, we
obtain the simplified structure diagram of a game, which will be our
main computational and theoretical tool in the remainder of the paper.
The main result of Section~\ref{sec:DNG} states that the nim-number
of the avoidance game is 0, 1, or 3 for an arbitrary finite group
(see Corollary~\ref{cor:nim-value-dng-013}). Analogously, in Section~\ref{sec:GEN},
we show that if the order of a group is odd, then the nim-number of
the corresponding achievement game is 1 or 2 (see Corollary~\ref{cor:nimOddGEN12}).
We conjecture that if the group is of even order, then the nim-number
of the achievement game is in $\{0,1,2,3,4\}$ (see Conjecture~\ref{conj:nimEvenGEN01234}).
Section~\ref{sec:Algorithms} describes the algorithms we implemented
via a computer program to generate our initial conjectures and verify
our results. Sections~\ref{sec:Cyclic}, \ref{sec:Dihedral}, and
\ref{sec:Abelian} contain a complete analysis of the nim-numbers
for cyclic, dihedral, and abelian groups, respectively. In Section~\ref{sec:SymmetricAlternating},
we study the symmetric and alternating groups. In particular, we provide
a description of the nim-numbers for the avoidance game for the symmetric
groups. In addition, we provide a partial characterization for the
achievement game for the symmetric groups, as well as both games for
the alternating groups. We conclude with several open questions in
Section~\ref{sec:Questions}.

The authors thank Bret Benesh and the anonymous referee for suggestions
that greatly improved the paper.

\section{Preliminaries\label{sec:Preliminaries}}

We briefly recall the basic terminology of impartial games to introduce
our notation. A comprehensive treatment of impartial games can be
found in \cite{albert2007lessons,SiegelBook}. For our purposes, an
\emph{impartial game} is a finite set $X$ of \emph{positions} together
with a starting position and a collection $\{\opt(P)\subseteq X\mid P\in X\}$
of possible \emph{options}. Two players take turns choosing one of
the available options in $\opt(P)$ of the current position $P$.
The player who encounters an empty option set cannot move and therefore
\emph{loses}. All games must come to an end in finitely many turns,
so we do not allow infinite lines of play. There are two possible
\emph{outcomes} for an impartial game. The game is an \emph{N-position}
if the next player (i.e., the player that is about to move) wins and
it is a \emph{P-position} if the previous player (i.e., the player
that just moved) wins.

The \emph{minimum excludant} $\mex(A)$ of a set $A$ of ordinals
is the smallest ordinal not contained in the set. The\emph{ nim-number}
$\nim(P)$ of a position $P$ is the minimum excludant of the set
of nim-numbers of the options of $P$. That is, 
\[
\nim(P):=\mex(\nopt(P)),
\]
where $\nopt(P):=\{\nim(Q)\mid Q\in\opt(P)\}$. Note that the minimum
excludant of the empty set is $0$, and so the terminal positions
of a game have nim-number 0. The \emph{nim-number of a game} is the
nim-number of its starting position. The nim-number of a game determines
the outcome of a game since a position $P$ is a P-position if and
only if $\nim(P)=0$.

The \emph{sum} of the games $P$ and $R$ is the game $P+R$ whose
set of options is 
\[
\opt(P+R):=\{Q+R\mid Q\in\opt(P)\}\cup\{P+S\mid S\in\opt(R)\}.
\]
This means that in each turn a player makes a valid move either in
game $P$ or in game $Q$. The nim-number of the sum of two games
can be determined as the \emph{nim-sum} 
\[
\nim(P+R)=\nim(P)\oplus\nim(R),
\]
 which requires binary addition without carry.

We write $P=R$ if the outcome of $P+T$ and $R+T$ is the same for
every game $T$. The one pile NIM game with $n$ stones is denoted
by the \emph{nimber} $*n$. The set of options of $*n$ is $\opt(*n)=\{*0,\ldots,*(n-1)\}$.
The following fundamental result shows the significance of the nimbers.
\begin{thm}
\emph{(Sprague--Grundy)} If $P$ is an impartial game, then $P=*\nim(P)$.
\end{thm}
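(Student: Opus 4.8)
The plan is to read the result off from the two facts the excerpt has already recorded: a position $Q$ is a P-position precisely when $\nim(Q)=0$, and $\nim(Q+R)=\nim(Q)\oplus\nim(R)$ for all games $Q,R$. The one small preliminary computation I need is that $\nim(*n)=n$ for every nonnegative integer $n$. I would get this by induction on $n$: since $\opt(*n)=\{*0,\dots,*(n-1)\}$, the inductive hypothesis gives $\nopt(*n)=\{0,1,\dots,n-1\}$, whence $\nim(*n)=\mex\{0,1,\dots,n-1\}=n$.

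With that in hand, fix an arbitrary impartial game $T$ and write $n=\nim(P)$. Applying the nim-sum formula to each side,
\[
\nim(P+T)=\nim(P)\oplus\nim(T)=n\oplus\nim(T)=\nim(*n)\oplus\nim(T)=\nim(*n+T).
\]
In particular $\nim(P+T)=0$ if and only if $\nim(*n+T)=0$, so by the P-position characterization $P+T$ is a P-position exactly when $*n+T$ is. Since the outcome of an impartial game is determined by whether it is a P-position or an N-position, $P+T$ and $*n+T=*\nim(P)+T$ have the same outcome. As $T$ was arbitrary, $P=*\nim(P)$ by the definition of equality of games, which is what we wanted.

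It is worth being honest about where the content actually sits: everything substantive has been packed into the identity $\nim(Q+R)=\nim(Q)\oplus\nim(R)$, which the excerpt quotes but does not establish, so in this route there is essentially no obstacle left. If one instead wanted a self-contained argument, the main obstacle would be exactly that identity, and the standard route is first to show by a mirroring strategy that $Q+Q$ is always a P-position, then to prove by induction on the rank of $P$ that $P+*\nim(P)$ is a P-position — at a first player's move in one summand the second player either matches it in the other summand or, if the move overshoots, corrects within the same summand, always returning to a position of the form $Q+*\nim(Q)$ — and finally to convert ``$P+*\nim(P)$ is a P-position'' into ``$P=*\nim(P)$'' using that adjoining a P-position to any game preserves its outcome.
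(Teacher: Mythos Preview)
The paper does not prove this theorem at all: it is quoted as the classical Sprague--Grundy theorem, with the reader referred to standard texts such as \cite{albert2007lessons,SiegelBook} for a proof. So there is no ``paper's own proof'' to compare against.

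Your derivation is correct and is exactly the natural way to extract the statement from the two facts the paper records (the nim-sum formula and the P-position criterion). You are also right to flag that the entire content has been absorbed into the identity $\nim(Q+R)=\nim(Q)\oplus\nim(R)$, which the paper states without proof; your sketch of the self-contained route via $P+*\nim(P)$ being a P-position is the standard one and is accurate. Nothing to fix.
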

We now recall a few well-known group-theoretic results and definitions
that will be useful in the remainder of the paper. The subgroup of
a group $G$ generated by the subset $S$ is the intersection of all
subgroups of $G$ containing $S$. Note that the empty set generates
the trivial subgroup.

A maximal proper subgroup of a group $G$ is called a \emph{maximal
subgroup} of $G$. It is clear that every proper subgroup of a finite
group is contained in a maximal subgroup. Note that the finite requirement
is necessary. For example, the group $(\mathbb{Q},+)$ has no maximal
subgroups. Maximal subgroups play an important role for us because
of the following easy fact.
\begin{prop}
\label{prop:GeneratingSubset}A subset $S$ of a finite group is a
generating set if and only if $S$ is not contained in any maximal
subgroup. 
\end{prop}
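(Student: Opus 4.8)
The plan is to prove both directions of the equivalence directly from the definition of the subgroup generated by a set. Recall that $\langle S\rangle$ is the intersection of all subgroups containing $S$, so in particular $S\subseteq\langle S\rangle$ and $\langle S\rangle$ is a subgroup. We must show that $\langle S\rangle=G$ precisely when $S$ lies in no maximal subgroup.

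For the forward direction (contrapositive), suppose $S$ is contained in some maximal subgroup $M$. Then $M$ is a subgroup of $G$ containing $S$, so by definition of the generated subgroup we have $\langle S\rangle\subseteq M\subsetneq G$. Hence $S$ is not a generating set. Conversely, suppose $S$ is not a generating set, so that $\langle S\rangle$ is a proper subgroup of $G$. Since $G$ is finite, the remark preceding the proposition tells us that every proper subgroup of $G$ is contained in a maximal subgroup; apply this to $\langle S\rangle$ to obtain a maximal subgroup $M$ with $\langle S\rangle\subseteq M$. Since $S\subseteq\langle S\rangle\subseteq M$, the set $S$ is contained in a maximal subgroup.

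Combining the two directions gives the stated biconditional. I expect no real obstacle here: the only nontrivial input is the fact that a proper subgroup of a finite group sits inside a maximal one, and that is already granted in the text immediately before the proposition (and follows from a straightforward argument: a proper subgroup is contained in a maximal element of the finite poset of proper subgroups containing it, which exists by finiteness). The lone subtlety worth a sentence of care is the edge case $S=\varnothing$, where $\langle\varnothing\rangle$ is the trivial subgroup; the argument above still applies verbatim, since the trivial subgroup is proper exactly when $G$ is nontrivial, and when $G$ is trivial it has no maximal subgroups and $\varnothing$ does generate it, so the equivalence holds vacuously on both sides.
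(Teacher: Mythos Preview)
Your proof is correct. The paper does not actually supply a proof of this proposition; it is stated as an ``easy fact'' immediately after the remark that every proper subgroup of a finite group is contained in a maximal subgroup, and your argument is exactly the standard verification one would expect (including the careful handling of the $S=\varnothing$ edge case, which is more than the paper itself does).
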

Next, we provide a more precise overview of the achievement and avoidance
games. Let $G$ be a finite group. We define the avoidance game $\dng(G)$
as follows. The first player chooses $x_{1}\in G$ such that $\langle x_{1}\rangle\neq G$
and at the $k$th turn, the concerned player selects $x_{k}\in G\setminus\{x_{1},\ldots,x_{k-1}\}$,
such that $\langle x_{1},\ldots,x_{k}\rangle\neq G$. That is, a position
in $\dng(G)$ is a set of jointly selected elements that must be a
non-generating subset of $G$. The player who cannot select an element
without building a generating set loses the game. 

In the achievement game $\gen(G)$, the first player chooses any $x_{1}\in G$
and at the $k$th turn, the concerned player selects $x_{k}\in G\setminus\{x_{1},\ldots,x_{k-1}\}$.
That is, a position in $\gen(G)$ is a set of jointly selected elements.
A player wins on the $n$th turn as soon as $\langle x_{1},\ldots,x_{n}\rangle=G$.

In this paper, we use $\mathbb{Z}_{n}:=\{0,1,\ldots,n-1\}$ to denote
the cyclic group of order $n$ under addition modulo $n$, so that
$\mathbb{Z}_{n}\cong\mathbb{Z}/n\mathbb{Z}$.
\begin{example}
The trivial group $\mathbb{Z}_{1}$ has no maximal subgroups. We cannot
play $\dng(\mathbb{Z}_{1})$ since every subset of the group, including
the empty set, is a generating set. The only position of $\gen(\mathbb{Z}_{1})$
is the empty set, and so the second player wins before the first player
can make a move. This implies that $\gen(\mathbb{Z}_{1})=*0$. 
\end{example}

\begin{example}
Consider the avoidance game on the cyclic group $\mathbb{Z}_{4}$.
No player can choose either 1 or 3 since these elements individually
generate the group. If the first player chooses 0, then the only option
for the second player is 2. After this move, the first player has
no available options. We arrive at the same conclusion if the first
player chooses 2 on the opening move. Regardless, the second player
wins $\dng(\mathbb{Z}_{4})$, which implies that $\dng(\mathbb{Z}_{4})=*0$.
The game digraph for $\dng(\mathbb{Z}_{4})$ is given in Figure~\ref{fig:GENZ4Full}(a).
In the digraph, the vertices are the positions of the game, every
position is connected to its options by arrows, and every position
is labeled by the nimber of the corresponding position.

For the achievement game, it is easy to see that the first player
can win on the opening move by choosing 1 or 3. However, if the first
player happens to choose 0 or 2 on the opening move, then the second
player may choose 1, 3, or the opposite choice that the first player
made on the opening move. The second player wins the game if they
choose either 1 or 3. However, if the second player makes the opposite
choice between 0 and 2 that the first player made, then the first
player wins by choosing either 1 or 3. The game digraph for $\gen(\mathbb{Z}_{4})$
is given in Figure~\ref{fig:GENZ4Full}(b). By looking at the top
node of the digraph, we see that $\gen(\mathbb{Z}_{4})=*1$.
\end{example}
\begin{figure}[h]
\begin{tabular}{ccccc}
\raisebox{1.2cm}{\includegraphics{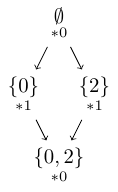}} & $\quad$ & \includegraphics{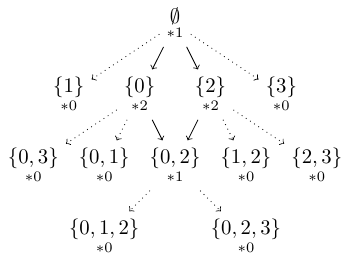} & $\quad$ & \includegraphics{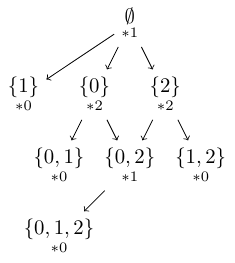}\tabularnewline
(a) $\dng(\mathbb{Z}_{4})$ &  & (b) $\gen(\mathbb{Z}_{4})$ &  & (c) $\gen(\mathbb{Z}_{4})$\tabularnewline
\end{tabular}

\caption{\label{fig:GENZ4Full}Game digraphs of $\protect\dng(\mathbb{Z}_{4})$
and $\protect\gen(\mathbb{Z}_{4})$, and representative game digraph
for $\protect\gen(\mathbb{Z}_{4})$. Nimbers corresponding to each
position of the game are included. The second digraph can be created
from the first by adding the dotted arrows that represent options
that create terminal positions.}
\end{figure}

We call two subsets $P$ and $Q$ of a group $G$ \emph{automorphism
equivalent} if there is an automorphism $\phi$ of $G$ such that
$\phi(P)=Q$. It is clear that an automorphism $\phi$ of $G$ induces
an automorphism of the game digraph, and so $\nim(P)=\nim(Q)$ if
$P$ and $Q$ are automorphism equivalent. For simplicity, we can
eliminate some of the positions of a game digraph without changing
the nim-numbers of the remaining positions. For each position $P$,
the set $\opt(P)$ of options is partitioned into automorphism equivalence
classes. We delete all but one representative from each of these classes.
The resulting digraph will be referred to as a \emph{representative
game digraph.} 
\begin{example}
Consider the achievement game on the cyclic group $\mathbb{Z}_{4}$.
It is clear that the subsets $\{3\}$, $\{0,3\}$, $\{2,3\}$, and
$\{0,2,3\}$ are automorphism equivalent to $\{1\}$, $\{0,1\}$,
$\{1,2\}$, and $\{0,1,3\}$, respectively. As a result, one possible
representative game digraph for $\gen(\mathbb{Z}_{4})$ is provided
in Figure~\ref{fig:GENZ4Full}(c).
\end{example}

We define $\pty(n):=n\bmod2$. The \emph{parity of a subset of a group}
is defined to be the parity of the size of the subset. Observe that
an option of a position in both $\dng$ and $\gen$ has the opposite
parity.

\section{Avoidance games\label{sec:DNG}}

In this section we study the avoidance game $\dng(G)$ on a finite
group $G$. In \cite{anderson.harary:achievement}, Anderson and Harary
proved the following criterion for determining the outcome of $\dng(G)$
for a finite abelian group.
\begin{prop}
\label{prop:Harary}Let $G$ be a finite abelian group. The first
player wins $\dng(G)$ if and only if $G$ is nontrivial of odd order
or $G\cong\mathbb{Z}_{2k}$ with $k$ odd. The second player wins
otherwise.
\end{prop}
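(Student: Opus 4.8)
The plan is to analyze the avoidance game $\dng(G)$ for a finite abelian $G$ directly, exploiting the fact that in an abelian group the set of non-generating elements has a transparent description. First I would dispose of the trivial group (it cannot be played, so the statement is vacuous there, and we may assume $G$ nontrivial). The key structural observation is that, by Proposition~\ref{prop:GeneratingSubset}, a subset fails to generate $G$ exactly when it lies in some maximal subgroup; for abelian $G$ the maximal subgroups are precisely the kernels of surjections onto $\mathbb{Z}_p$ for primes $p$ dividing $|G|$, equivalently the index-$p$ subgroups. So a position in $\dng(G)$ is any set of distinct elements whose span is contained in a maximal (index-$p$) subgroup. I would record the elementary consequence that the Frattini subgroup $\Phi(G)$ (the intersection of all maximal subgroups) consists of ``non-moves that never matter'': every such element lies in every maximal subgroup, so adding it to a position never changes whether the position generates.

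The heart of the argument is a parity/pairing analysis. Since every move changes the parity of the selected set, and the loser is the player who is forced to complete a generating set (i.e., faces an empty option set), the outcome is governed by the lengths of maximal play sequences. I would argue that optimal play in $\dng(G)$ essentially reduces to a single maximal subgroup $M$: once any element outside a fixed maximal subgroup has been avoided for long enough, the game is confined to filling up some maximal subgroup $M$, and the game effectively becomes ``select all of $M$'' with the constraint of not generating $G$. The relevant quantity is then the parity of $|M|$ for the maximal subgroup that optimal play will drive toward, and one must check which player can force the game into a maximal subgroup of favorable parity. Concretely: if $|G|$ is odd, every maximal subgroup has odd order, the unique terminal position (after reindexing) has odd size, so the first player makes the last legal move and the second player is stuck — first player wins. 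If $G\cong\mathbb{Z}_{2k}$ with $k$ odd, the unique maximal subgroup of even order is $\mathbb{Z}_k$'s... more precisely one tracks the index-$2$ subgroup $2G\cong\mathbb{Z}_k$ (odd order) versus the index-$p$ subgroups for odd $p\mid k$ (even order), and shows the first player can steer toward an even-order maximal subgroup, again making the last move. In all remaining cases — $G$ of even order not of the form $\mathbb{Z}_{2k}$, $k$ odd — one shows the second player can mirror or pair moves so that the first player is always the one forced to generate; the obstruction $\mathbb{Z}_2\times\mathbb{Z}_2$-type behavior (more than one index-$2$ subgroup, i.e. $G/\Phi(G)$ has a large $2$-part) is what makes the second player win.

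I expect the main obstacle to be making rigorous the claim that optimal play ``confines the game to a single maximal subgroup'' and computing which player controls the choice of that subgroup's parity. A clean way to handle this is via a pairing strategy argument: pair up elements of $G$ outside a well-chosen maximal subgroup, or pair an element $g$ with $g^{-1}$ (or with $g+c$ for a suitable involution-free translation), so that the responding player always has a canonical reply; one then checks the pairing is consistent with the non-generating constraint. The case division will hinge on the structure of $G/\Phi(G)$, which for abelian $G$ is a product of cyclic groups of prime order, so that counting index-$2$ subgroups amounts to reading off the number of $\mathbb{Z}_2$ factors. I would also double-check the boundary cases $\mathbb{Z}_2$ and $\mathbb{Z}_{2k}$ with $k$ odd by hand against the game digraph, as in the $\dng(\mathbb{Z}_4)$ example, to make sure the parity bookkeeping has the right offset. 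Since this proposition is quoted from~\cite{anderson.harary:achievement}, an acceptable alternative is simply to cite it; but the self-contained argument above is the route I would take if a proof is wanted, and it also foreshadows the nim-number computations of Section~\ref{sec:Cyclic} and Section~\ref{sec:Abelian}.
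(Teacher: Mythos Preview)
The paper does not give its own proof of this proposition; it is stated as a result of Anderson and Harary and simply cited from~\cite{anderson.harary:achievement}. (The statement is later subsumed by the stronger nim-number computations in Proposition~\ref{prop:DNGodd} and Section~\ref{sec:Cyclic}, which are proved via structure diagrams rather than by a direct pairing or steering argument.)

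Your sketch is therefore not comparable to a proof in the paper, but on its own merits it points in the right direction while remaining a plan rather than a proof. The observation that terminal positions are precisely the maximal subgroups, so that the parity of $|M|$ determines who makes the last move once play is confined to $M$, is correct and is exactly the mechanism behind the later structure-diagram arguments. However, the step you yourself flag as the main obstacle --- showing which player can \emph{force} play to terminate in a maximal subgroup of favorable parity --- is genuinely the whole difficulty, and your pairing suggestions are not yet concrete enough to settle it. There is also a slip in the $\mathbb{Z}_{2k}$ case: for the first player to make the last move the terminal maximal subgroup must have \emph{odd} order (namely the index-$2$ subgroup $\langle 2\rangle\cong\mathbb{Z}_k$), not even order as you wrote. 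Your closing remark that one may simply cite~\cite{anderson.harary:achievement} is exactly what the authors do.
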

Barnes~\cite{Barnes} reproved this result using the following criterion
for determining the outcome of $\dng(G)$ for an arbitrary finite
group $G$. Recall that an involution in a group is an element of
order $2$.
\begin{prop}
\label{prop:Barnes}Let $G$ be a nontrivial finite group. Then the
first player wins $\dng(G)$ if and only if there exists $\alpha\in G$
such that $\alpha$ has odd order and $\langle\alpha,\beta\rangle=G$
for every involution $\beta\in G$.
\end{prop}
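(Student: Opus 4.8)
The plan is to translate $\dng(G)$ into a group-theoretic statement about the order of the maximal subgroup at which a play ends. A position is exactly a non-generating subset of $G$, and by Proposition~\ref{prop:GeneratingSubset} the terminal positions are precisely the maximal subgroups of $G$: a non-generating set $S$ lies in some maximal subgroup $M$, and if $S\subsetneq M$ then any element of $M\setminus S$ is a legal move, while if $S=M$ then maximality forces $\langle M,g\rangle=G$ for every $g\notin M$. Since each move enlarges the position by exactly one element, a play terminating at the maximal subgroup $M$ has length $|M|$, so under normal play the first player wins that play iff $|M|$ is odd, which by Cauchy's theorem says that $M$ contains no involution. Using Proposition~\ref{prop:GeneratingSubset} once more, $\langle\alpha,\beta\rangle\neq G$ iff $\alpha$ and $\beta$ lie in a common maximal subgroup, so the criterion in the statement is equivalent to the existence of an odd-order $\alpha$ such that every maximal subgroup containing $\alpha$ has odd order. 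It therefore suffices to prove that the first player can force termination at an involution-free maximal subgroup if and only if such an $\alpha$ exists.

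For the forward direction, given such an $\alpha$ I would let the first player open with $\alpha$. This is legal once we arrange $\langle\alpha\rangle\neq G$: if $\langle\alpha\rangle=G$ then $G$ is cyclic of odd order, hence has no involutions, and $\alpha=e$ is an equally good witness with $\langle e\rangle\neq G$. Every later position contains $\alpha$ and is non-generating, hence lies in a maximal subgroup $M\ni\alpha$, which by hypothesis has odd order; in particular the maximal subgroup at which the play terminates has odd order, and the first player (whose turn can never coincide with an odd-size terminal position) is never stuck, so the first player wins regardless of how either player continues.

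For the reverse direction I would assume no such $\alpha$ exists, that is, every odd-order element of $G$ lies in some maximal subgroup containing an involution, and build a second-player strategy that forces an involution into the position on the second move. The leverage is that once the position contains an involution $t$, every maximal subgroup containing $t$ has even order, so the play must end at an even-order maximal subgroup after an even number of moves, with the second player moving last; moreover the second player is never stranded, since an odd-size set containing an involution is not a maximal subgroup, hence not terminal. Explicitly, after the first move $x_1$: if $x_1$ has even order, the second player answers with the involution $x_1^{|x_1|/2}$, which is legal because it lies in $\langle x_1\rangle\neq G$, unless $x_1$ is itself an involution, in which case the position already contains one and the second player moves arbitrarily; if $x_1$ has odd order, the failure of the criterion supplies a maximal subgroup $M\ni x_1$ containing an involution $t$, and $\langle x_1,t\rangle\subseteq M\neq G$ makes $t$ legal. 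One must also note that $\{x_1\}$ itself is never terminal here: applying the failure of the criterion to $\alpha=e$ yields an involution $\beta$ with $\langle\beta\rangle\neq G$, a proper nontrivial subgroup, so $\{e\}$ is not maximal.

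The step I expect to be the genuine obstacle is getting the second-player strategy right. The naive invariant ``keep the position inside an even-order maximal subgroup'' cannot be maintained once the first player plays outside the current such subgroup, and the resolution is the observation above that planting a single involution on the second move suffices, after which parity does all the remaining work. The other ingredients --- the description of terminal positions, the length count, and the degenerate cyclic-of-prime-order cases, which are covered by the forward direction anyway --- are routine.
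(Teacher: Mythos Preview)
The paper does not prove this proposition; it is quoted from Barnes~\cite{Barnes} as a known result, so there is no in-paper argument to compare against. Your proof is self-contained and correct, and is in fact essentially Barnes's original argument: reduce to the observation that terminal positions are exactly maximal subgroups, so the parity of the terminal subgroup's order decides the winner, and then (forward) have the first player open with the witness $\alpha$ so that any eventual terminal maximal subgroup must contain $\alpha$ and hence be odd, while (reverse) have the second player force an involution into the position on move two so that any terminal maximal subgroup is even.

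Two small remarks on exposition. First, your final sentence about applying the failure of the criterion to $\alpha=e$ to show $\{e\}$ is not maximal is misplaced: the case you are actually worried about there is $x_1$ an involution, where $\{x_1\}$ is not even a subgroup, so nonterminality is automatic; the case $x_1=e$ falls under ``$x_1$ has odd order'' and is already handled by producing the involution $t$. Second, in the forward direction you should state explicitly that the first player's strategy after the opening move is to play \emph{any} legal element: you do say ``regardless of how either player continues,'' which is the right content, but it is worth making explicit that no further strategic choices are needed once $\alpha$ is on the board. Neither point is a gap; the argument stands as written.
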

Note that the last condition of the previous proposition vacuously
holds if the order of $G$ is odd. Since the first player wins precisely
when $\dng(G)\neq*0$, we immediately have the following corollary
of Proposition~\ref{prop:Harary}.
\begin{cor}
\label{cor:HararyBarnes}Let $G$ be a nontrivial finite abelian group.
Then $\dng(G)\neq*0$ if and only if $G$ has odd order or $G\cong\mathbb{Z}_{2k}$
with $k$ odd.
\end{cor}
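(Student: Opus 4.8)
The plan is to obtain Corollary~\ref{cor:HararyBarnes} as an immediate consequence of the two preceding propositions, since it is essentially a restatement with a change of perspective from ``outcome'' to ``nim-number.'' The only substantive thing to establish is that the group-theoretic condition appearing in Proposition~\ref{prop:Barnes} --- the existence of an element $\alpha$ of odd order with $\langle\alpha,\beta\rangle=G$ for every involution $\beta$ --- is equivalent, for a nontrivial finite group, to the arithmetic condition that $G$ has odd order or $G\cong\mathbb{Z}_{2k}$ with $k$ odd. Combined with the remark (already noted in the paper) that $\dng(G)\neq*0$ precisely when the first player wins, this equivalence delivers the corollary.

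First I would dispose of the easy direction. If $|G|$ is odd, then $G$ has no involutions, so the condition on $\beta$ is vacuous and we need only an element of odd order, e.g. $\alpha=e$ (or note every element has odd order); hence the first player wins by Proposition~\ref{prop:Barnes}, i.e. $\dng(G)\neq*0$. If $G\cong\mathbb{Z}_{2k}$ with $k$ odd, then $G$ has a unique involution $\beta$ (the element of order $2$), and $G$ is cyclic, so we may take $\alpha$ to be a generator of the odd-order subgroup $\mathbb{Z}_k\le G$; then $\langle\alpha,\beta\rangle$ contains an element of order $k$ and an element of order $2$ with $\gcd(k,2)=1$, hence has order divisible by $2k$, so equals $G$. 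Thus again $\dng(G)\neq*0$. This already appears implicitly via Proposition~\ref{prop:Harary}, but it is cleaner to argue it directly from Barnes's criterion.

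For the reverse direction I would argue the contrapositive: suppose $G$ is nontrivial of even order and $G\not\cong\mathbb{Z}_{2k}$ with $k$ odd; I must show no $\alpha$ of odd order can satisfy Barnes's condition, i.e. that for every odd-order $\alpha$ there is an involution $\beta$ with $\langle\alpha,\beta\rangle\neq G$. The key structural input is that $\langle\alpha\rangle$, having odd order, is a proper subgroup, so it lies in some maximal subgroup $M$ (using that $G$ is finite, via Proposition~\ref{prop:GeneratingSubset} and the paragraph on maximal subgroups). The heart of the matter --- and the step I expect to be the main obstacle --- is showing that one can choose such an $M$ containing an involution $\beta$; then $\langle\alpha,\beta\rangle\le M\neq G$ as desired. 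Since $G$ has even order, $G$ does contain involutions; the difficulty is that an arbitrary maximal subgroup containing $\langle\alpha\rangle$ need not contain any involution (a maximal subgroup can have odd order). The standard device here is a counting/index argument: if every maximal subgroup containing $\langle\alpha\rangle$ had odd order, one analyzes the Sylow $2$-structure --- a subgroup of odd order is contained in a maximal subgroup of odd order only in rather constrained situations, and when $G$ has more than one involution or its Sylow $2$-subgroup is non-cyclic, one can always find a maximal subgroup of even order containing any given odd-order subgroup. I would isolate this as a lemma: \emph{if $G$ is a finite group of even order that is not of the form $\mathbb{Z}_{2k}$ with $k$ odd, then every subgroup of odd order is contained in a maximal subgroup of even order.} Its proof would split on whether a Sylow $2$-subgroup of $G$ is cyclic: in the non-cyclic case, Sylow theory and the Frattini-type argument give abundant even-order maximal subgroups; in the cyclic case, the Burnside normal $2$-complement theorem forces $G\cong N\rtimes \mathbb{Z}_{2^a}$ and a direct analysis (using that $G\not\cong\mathbb{Z}_{2k}$ means $a\ge 2$ or $N$ is non-normal-complement-trivial) produces the required maximal subgroup.

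Finally I would assemble the pieces: the lemma plus Proposition~\ref{prop:Barnes} show that for $G$ nontrivial and even of a type other than $\mathbb{Z}_{2k}$ ($k$ odd), the second player wins, i.e. $\dng(G)=*0$; together with the forward direction this gives ``$\dng(G)\neq *0$ iff $|G|$ odd or $G\cong\mathbb{Z}_{2k}$, $k$ odd,'' which is exactly the statement. I should also double-check the trivial-group boundary case is correctly excluded by the hypothesis ``nontrivial'' (consistent with the earlier example that $\dng(\mathbb{Z}_1)$ is not playable). If one prefers to avoid the Sylow-theoretic lemma entirely, an alternative --- and probably the route the authors take --- is simply to cite Proposition~\ref{prop:Harary} for abelian $G$ to pin down the equivalence in that case and invoke Proposition~\ref{prop:Barnes} directly, observing that the two displayed criteria are logically identical once one notes the outcome/nim-number dictionary; in that case the corollary is a one-line deduction and the ``main obstacle'' evaporates, the real content having been in Barnes's theorem. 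I would present the short version in the paper and relegate the structural lemma to a remark, since Propositions~\ref{prop:Harary} and~\ref{prop:Barnes} are being assumed as given.
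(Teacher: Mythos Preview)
Your proposed lemma --- that in a finite group of even order not isomorphic to $\mathbb{Z}_{2k}$ with $k$ odd, every odd-order subgroup lies in a maximal subgroup of even order --- is false, and in fact the corollary as literally stated cannot hold for arbitrary finite groups. Take $G=S_3=\mathbb{D}_3$: it has even order $6$ and is non-abelian, hence not isomorphic to any $\mathbb{Z}_{2k}$; yet the paper itself later computes $\dng(\mathbb{D}_3)=*3\neq *0$ (see Section~\ref{sec:Dihedral} and Table~\ref{table:DNGSnJoint}). Concretely, in $S_3$ the subgroup $A_3=\langle(123)\rangle$ has odd order $3$ and is itself maximal, so it is contained in no maximal subgroup of even order; equivalently, $\alpha=(123)$ has odd order and $\langle\alpha,\beta\rangle=S_3$ for every transposition $\beta$, so Barnes's criterion (Proposition~\ref{prop:Barnes}) is satisfied while the arithmetic condition of the corollary is not. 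Your Sylow/Burnside outline therefore cannot be repaired: the two criteria are simply not equivalent beyond the abelian case.

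The paper offers no argument beyond the sentence ``we immediately have the following corollary,'' and the only reading under which that is genuinely immediate is with \emph{abelian} inserted into the hypothesis: then the statement is exactly Proposition~\ref{prop:Harary} rephrased via ``first player wins $\Longleftrightarrow$ nim-number $\neq 0$.'' Your closing remark about the one-line deduction from Propositions~\ref{prop:Harary} and~\ref{prop:Barnes} is thus the correct route for abelian $G$; the substantive ``main obstacle'' you set yourself --- extending the equivalence to all finite groups --- is not an obstacle that can be overcome, because the target statement is false in that generality.
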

The next proposition follows immediately from Proposition~\ref{prop:GeneratingSubset}.
\begin{prop}
The positions of $\dng(G)$ are the subsets of the maximal subgroups
of $G$ and the terminal positions of $\dng(G)$ are the maximal subgroups
of $G$. 
\end{prop}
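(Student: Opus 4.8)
The plan is to unwind the definitions of $\dng(G)$ and apply Proposition~\ref{prop:GeneratingSubset}. Recall that a position in $\dng(G)$ is a set $\{x_1,\ldots,x_k\}$ of jointly selected elements such that at every stage the selected elements do not generate $G$; equivalently, a position is any subset $S$ of $G$ with $\langle S\rangle\neq G$, i.e.\ any non-generating subset. By Proposition~\ref{prop:GeneratingSubset}, $S$ is non-generating if and only if $S$ is contained in some maximal subgroup of $G$. So the first claim reduces to observing that ``$S$ is a subset of a non-generating subset'' and ``$S$ is itself non-generating'' and ``$S$ is contained in a maximal subgroup'' are all the same condition: a subset of a non-generating set is non-generating (since $\langle S\rangle \le \langle S'\rangle$ whenever $S\subseteq S'$), and conversely a non-generating set is a subset of itself.

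For the second claim, I would characterize the terminal positions as those positions $P$ with $\opt(P)=\emptyset$. A position $P$ has an available move precisely when there exists $g\in G\setminus P$ such that $P\cup\{g\}$ is still non-generating, i.e.\ still contained in a maximal subgroup. First I would show a maximal subgroup $M$ is terminal: for any $g\in G\setminus M$, the set $M\cup\{g\}$ generates a subgroup strictly containing the maximal subgroup $M$, hence equals $G$ by maximality, so $M\cup\{g\}$ is generating and not a legal position; thus $\opt(M)=\emptyset$. Conversely, I would show a non-terminal-looking position is not terminal: if $P$ is a position that is \emph{not} a maximal subgroup, then $P$ is contained in some maximal subgroup $M$ with $P\subsetneq M$ (proper containment because if $P=M$ it would be maximal, contradiction — here using that every proper subgroup lies in a maximal subgroup, and $P$ itself need not be a subgroup, but $\langle P\rangle$ is a proper subgroup contained in a maximal $M$, and $P\subseteq\langle P\rangle\subseteq M$; if $P=M$ then $P$ is a maximal subgroup). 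Pick any $g\in M\setminus P$; then $P\cup\{g\}\subseteq M$ is non-generating, so it is a legal option of $P$, whence $\opt(P)\neq\emptyset$.

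Combining the two directions, the terminal positions are exactly the maximal subgroups of $G$, and all positions are exactly the subsets of the maximal subgroups. The only mild subtlety — the step I would be most careful about — is the case analysis showing that a non-maximal-subgroup position $P$ admits proper containment in a maximal subgroup: one must not confuse $P$ with $\langle P\rangle$, and one must invoke the standing finiteness hypothesis so that the proper subgroup $\langle P\rangle$ is guaranteed to sit inside a maximal subgroup. Everything else is immediate from Proposition~\ref{prop:GeneratingSubset} and the definition of maximality.
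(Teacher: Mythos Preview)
Your proof is correct and follows exactly the approach the paper intends: the paper simply states that the proposition ``follows immediately from Proposition~\ref{prop:GeneratingSubset}'' without giving further details, and your argument is a careful unpacking of precisely that deduction. The only difference is that you make explicit the routine verifications (e.g., that a non-maximal-subgroup position is properly contained in some maximal subgroup via $\langle P\rangle$) that the paper leaves to the reader.
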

It turns out that positions that are contained in the same collection
of maximal subgroups are closely related. This motivates the next
two definitions.
\begin{defn}
Let $\mathcal{M}$ be the set of maximal subgroups of $G$. The set
of \emph{intersection subgroups} is defined to be the set 
\[
\mathcal{I}:=\{\cap\mathcal{N}\mid\emptyset\not=\mathcal{N\subseteq\mathcal{M}}\}
\]
containing all the possible intersections of some maximal subgroups. 
\end{defn}
Note that the elements of $\mathcal{I}$ are in fact subgroups of
$G$. If $G$ is nontrivial, then the smallest intersection subgroup
is the Frattini subgroup $\Phi(G)$. Not every subgroup of $G$ is
an intersection subgroup. For example, $\Phi(G)$ may not be trivial.
The set $\mathcal{I}$ of intersection subgroups is partially ordered
by inclusion. We use interval notation to denote certain subsets of
$\mathcal{I}$. For example, if $I\in\mathcal{I}$, then $(-\infty,I):=\{J\in\mathcal{I}\mid J\subset I\}$. 
\begin{defn}
For each $I\in\mathcal{I}$ let
\[
X_{I}:=\mathcal{P}(I)\setminus\cup\{\mathcal{P}(J)\mid J\in(-\infty,I)\}
\]
be the collection of those subsets of $I$ that are not contained
in any other intersection subgroup smaller than $I$. We let $\mathcal{X}:=\{X_{I}\mid I\in\mathcal{I}\}$
and call an element of $\mathcal{X}$ a \emph{structure class}.
\end{defn}
The largest element of $X_{I}$ is $I$. The starting position $\emptyset$
is in $X_{\Phi(G)}$. We say that $X_{I}$ is \emph{terminal} if $I$
is terminal. The \emph{parity of a structure class} is defined to
be $\pty(X_{I}):=\pty(I)$. 
\begin{example}
The subset $P=\{0\}$ generates the trivial subgroup of $G=\mathbb{Z}_{4}$.
If $I=\Phi(G)=\{0,2\}$ is the Frattini subgroup of $G$, then $P\in X_{I}$
since $P\subseteq I$ and $(-\infty,I)$ is empty. This shows that
$P\in X_{I}$ does not imply that $P$ generates $I$. 
\end{example}
\begin{prop}
If $I$ and $J$ are different elements of $\mathcal{I}$, then $X_{I}\cap X_{J}=\emptyset$.
\end{prop}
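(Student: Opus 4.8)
The plan is to argue by contradiction. Suppose $I \neq J$ are intersection subgroups with $X_I \cap X_J \neq \emptyset$, and pick $P$ in the intersection. By the definition of $X_I$ we have $P \subseteq I$, and by the definition of $X_J$ we have $P \subseteq J$; hence $P \subseteq I \cap J$. The key observation is that $\mathcal{I}$ is closed under intersection: if $I = \cap \mathcal{N}$ and $J = \cap \mathcal{N}'$ for nonempty $\mathcal{N}, \mathcal{N}' \subseteq \mathcal{M}$, then $I \cap J = \cap(\mathcal{N} \cup \mathcal{N}')$ with $\mathcal{N} \cup \mathcal{N}'$ a nonempty subset of $\mathcal{M}$, so $I \cap J \in \mathcal{I}$.

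Now, since $I \neq J$, at least one of the strict containments $I \cap J \subset I$ or $I \cap J \subset J$ must hold; without loss of generality say $I \cap J \subset I$. Then $K := I \cap J$ is an intersection subgroup with $K \in (-\infty, I)$, and $P \subseteq K$ means $P \in \mathcal{P}(K)$. But by the definition of $X_I = \mathcal{P}(I) \setminus \cup\{\mathcal{P}(J') \mid J' \in (-\infty, I)\}$, the set $P$ cannot lie in $\mathcal{P}(K)$ for any $K \in (-\infty, I)$. This contradicts $P \in X_I$, completing the proof.

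The only genuine content here is the closure of $\mathcal{I}$ under finite intersection, which is immediate from the description $\mathcal{I} = \{\cap \mathcal{N} \mid \emptyset \neq \mathcal{N} \subseteq \mathcal{M}\}$; everything else is unwinding definitions. I do not anticipate any real obstacle, though one should be slightly careful about the trivial-looking edge case: the argument uses that $I \neq J$ forces a strict containment somewhere, which is clear since $I \cap J = I$ and $I \cap J = J$ cannot both hold when $I \neq J$.
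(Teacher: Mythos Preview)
Your proof is correct and follows essentially the same approach as the paper's own argument: both set $K=I\cap J\in\mathcal{I}$, use $I\neq J$ to get $K\subsetneq I$ (up to relabeling), and obtain a contradiction with $P\in X_I$ from $P\subseteq K$. Your write-up is slightly more explicit about why $I\cap J\in\mathcal{I}$, but the argument is the same.
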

\begin{proof}
Assume $P\in X_{I}\cap X_{J}$ and let $K:=I\cap J\in\mathcal{I}$.
Since $I\ne J$, we must have $K\ne I$ or $K\ne J$. Without loss
of generality, assume that $K\ne I$. Then $P\subseteq K\in(-\infty,I)$,
which contradicts $P\in X_{I}$.
\end{proof}
\begin{cor}
The set $\mathcal{X}$ of structure classes is a partition of the
set of game positions of $\dng(G)$.
\end{cor}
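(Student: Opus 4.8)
The plan is to deduce this from the preceding proposition by establishing two further facts: that every structure class is nonempty, and that the structure classes together exhaust the positions of $\dng(G)$. Nonemptiness is immediate, since $I\in X_I$ for every $I\in\mathcal{I}$: indeed $I\in\mathcal{P}(I)$, and for any $J\in(-\infty,I)$ we have $J\subsetneq I$, so $I\notin\mathcal{P}(J)$.

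Next I would check that each $X_I$ consists of genuine positions of $\dng(G)$. Writing $I=\cap\mathcal{N}$ for some nonempty $\mathcal{N}\subseteq\mathcal{M}$ and choosing any $M\in\mathcal{N}$, we get $I\subseteq M$, so any $P\in X_I\subseteq\mathcal{P}(I)$ is a subset of the maximal subgroup $M$ and hence, by Proposition~\ref{prop:GeneratingSubset}, a non-generating subset of $G$, i.e.\ a position of $\dng(G)$. Conversely, let $P$ be an arbitrary position of $\dng(G)$. By the earlier description of the positions, $P$ is contained in some maximal subgroup, so $\mathcal{N}_P:=\{M\in\mathcal{M}\mid P\subseteq M\}$ is nonempty and $I:=\cap\mathcal{N}_P\in\mathcal{I}$ satisfies $P\subseteq I$. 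The key point is that $I$ is the smallest intersection subgroup containing $P$: if $J\in\mathcal{I}$ and $P\subseteq J$, write $J=\cap\mathcal{N}'$ with $\mathcal{N}'\subseteq\mathcal{M}$ nonempty; then every $M\in\mathcal{N}'$ satisfies $P\subseteq J\subseteq M$, so $\mathcal{N}'\subseteq\mathcal{N}_P$ and therefore $I\subseteq J$. In particular no $J\in(-\infty,I)$ contains $P$, so $P\in\mathcal{P}(I)\setminus\cup\{\mathcal{P}(J)\mid J\in(-\infty,I)\}=X_I$. Combining this with the pairwise disjointness from the preceding proposition and the nonemptiness observed above yields that $\mathcal{X}$ partitions the set of positions of $\dng(G)$.

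The only step needing a word of justification is the claim that $I$ is the least intersection subgroup containing $P$, and this rests solely on $\mathcal{I}$ being closed under intersection --- the intersection of two intersection subgroups is the intersection over the union of the two defining families --- together with the finiteness of $G$, which guarantees that $\mathcal{M}$, and hence $\mathcal{N}_P$, is finite. I do not anticipate any genuine obstacle beyond this bookkeeping.
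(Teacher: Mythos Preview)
Your proof is correct and complete. The paper does not actually give a proof of this corollary at all; it is stated immediately after the disjointness proposition as a direct consequence, so you are simply spelling out the details the authors left implicit, and your argument (nonemptiness via $I\in X_I$, coverage via the minimal intersection subgroup containing a position, together with the preceding disjointness) is exactly the natural one.
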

As we shall see, the structure classes play a pivotal role in the
remainder of this paper. Proposition~\ref{prop:dngCompat} and Corollary~\ref{cor:dngCompat}
imply that the collection of structure classes is compatible with
the option relationship between game positions. This will allow us
to define the structure digraph of $\dng(G)$ (see Definition~\ref{def:structureDigraph}),
which will capture the option relationship among structure classes.
Then in Proposition~\ref{prop:folding}, we show that two elements
of a structure class have the same nim-number if and only if they
have the same parity. In other words, each structure class is associated
with two nim-numbers. By appending this data to the corresponding
structure digraph, we can visualize the nim-number relationship among
structure classes using a structure diagram. By defining an appropriate
equivalence relation on the collection of structure classes (see Definition~\ref{def:simplifiedStructureDiagram}),
we will be able to make identifications that allow us to greatly simplify
the task of computing the nim-number of $\dng(G)$ for a wide class
of groups.
\begin{prop}
\label{prop:dngCompat}Assume $X_{I},X_{J}\in\mathcal{X}$ and $P\in X_{I}\not=X_{J}$.
Then $\opt(P)\cap X_{J}\not=\emptyset$ if and only if $\opt(I)\cap X_{J}\not=\emptyset$.
\end{prop}
\begin{proof}
First, assume that $\opt(P)\cap X_{J}\not=\emptyset$. Then there
exists a $g\in G\setminus P$ such that $P\cup\{g\}\in X_{J}$. That
is, $P\cup\{g\}\subseteq J$ but $P\cup\{g\}$ is not contained in
any $K\in(-\infty,J)$. This implies that $I\subset J$, otherwise
we would have $P\subseteq I\cap J\in(-\infty,I)$, contradicting $P\in X_{I}$.
There is no $K$ satisfying $I\cup\{g\}\subseteq K\in(-\infty,J)$
since we would then have $P\cup\{g\}\subseteq I\cup\{g\}\subseteq K\in(-\infty,J)$,
which contradicts $P\cup\{g\}\in X_{J}$. Thus, $I\cup\{g\}\in X_{J}$,
which shows that $\opt(I)\cap X_{J}\not=\emptyset$.

Now, assume that $\opt(I)\cap X_{J}\not=\emptyset$. Then $I\cup\{g\}\in X_{J}$
for some $g\in J\setminus I$, that is, $I\cup\{g\}\subseteq J$ but
$I\cup\{g\}$ is not contained in any $K\in(-\infty,J)$. Then clearly
$P\cup\{g\}\subseteq J$. There is no $K$ satisfying $P\cup\{g\}\subseteq K\in(-\infty,J)$,
otherwise we would have $P\subseteq K\cap I\in(-\infty,I)$ contradicting
$P\in X_{I}$. Hence $P\cup\{g\}\in X_{J}$, and so $\opt(P)\cap X_{J}\not=\emptyset$,
as desired.
\end{proof}
\begin{cor}
\label{cor:dngCompat}Assume $X_{I},X_{J}\in\mathcal{X}$ and $P,Q\in X_{I}\ne X_{J}$.
Then $\opt(P)\cap X_{J}\not=\emptyset$ if and only if $\opt(Q)\cap X_{J}\ne\emptyset$. 
\end{cor}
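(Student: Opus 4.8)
The plan is to derive this directly from Proposition~\ref{prop:dngCompat}, which already carries all of the real content. Since $P$ and $Q$ both lie in the structure class $X_I$ and $X_I\ne X_J$, I would invoke Proposition~\ref{prop:dngCompat} twice: once with the position $P$ and once with the position $Q$. The first application yields that $\opt(P)\cap X_J\ne\emptyset$ holds if and only if $\opt(I)\cap X_J\ne\emptyset$, and the second yields that $\opt(Q)\cap X_J\ne\emptyset$ holds if and only if $\opt(I)\cap X_J\ne\emptyset$. The pivotal observation is that the condition $\opt(I)\cap X_J\ne\emptyset$ refers only to the (largest element $I$ of the) class $X_I$ and to $X_J$, and not to any particular position in $X_I$; it therefore serves as a common intermediate statement. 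Chaining the two biconditionals through it gives $\opt(P)\cap X_J\ne\emptyset$ if and only if $\opt(Q)\cap X_J\ne\emptyset$, which is exactly the assertion.

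There is essentially no obstacle to overcome: the corollary merely records the symmetric reformulation of Proposition~\ref{prop:dngCompat}, namely that ``$X_J$ is reachable in a single move'' is a property of the structure class $X_I$ as a whole rather than of its individual members. The only point to check is that the hypothesis of Proposition~\ref{prop:dngCompat} --- that the two structure classes in question are distinct --- is satisfied for both invocations, and this is immediate from the standing assumption $X_I\ne X_J$ (the relevant classes in each application are precisely $X_I$ and $X_J$). This is the statement that will later justify, in Definition~\ref{def:structureDigraph}, drawing an arrow from $X_I$ to $X_J$ exactly when some (equivalently, every) position in $X_I$ has an option lying in $X_J$.
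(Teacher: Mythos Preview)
Your proposal is correct and matches the paper's intended argument: the corollary is stated without proof because it is immediate from Proposition~\ref{prop:dngCompat}, and your two applications of that proposition chained through the common condition $\opt(I)\cap X_J\ne\emptyset$ are exactly the obvious derivation.
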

This motivates the following definition.
\begin{defn}
\label{def:structureDigraph}We say that $X_{J}$ is an \emph{option}
of $X_{I}$ and we write $X_{J}\in\opt(X_{I})$ if $\opt(I)\cap X_{J}\not=\emptyset$.
The \emph{structure digraph} of $\dng(G)$ has vertex set $\{X_{I}\mid I\in\mathcal{I}\}$
and edge set $\{(X_{I},X_{J})\mid X_{J}\in\opt(X_{I})\}$.
\end{defn}
If $I\not=P\in X_{I}$, then $P\cup\{g\}\in\opt(P)\cap X_{I}$ for
all $g\in I\setminus P\not=\emptyset$. So, $I$ is the only element
of $X_{I}$ without an option in $X_{I}$. Note that there are no
loops in the structure digraph and $X_{\Phi(G)}$ is the only source
vertex. 
\begin{example}
The set of maximal subgroups of $G=\mathbb{Z}_{6}$ is $\mathcal{M}=\{\{0,2,4\},\{0,3\}\}$.
The intersection subgroups are in $\mathcal{I}=\{\{0\},\{0,2,4\},\{0,3\}\}$.
Note that $\Phi(G)=\{0\}$. The elements of $\mathcal{X}$ are 
\[
X_{\{0\}}=\{\emptyset,\{0\}\},\quad X_{\{0,2,4\}}=\{\{2\},\{4\},\{0,2\},\{0,4\},\{2,4\},\{0,2,4\}\},\quad X_{\{0,3\}}=\{\{3\},\{0,3\}\}.
\]
The structure digraph is visualized by
\[
X_{\{0,2,4\}}\longleftarrow X_{\{0\}}\longrightarrow X_{\{0,3\}}.
\]
\end{example}
The following lemma will be useful in the proof of Proposition~\ref{prop:folding}.
\begin{lem}
\label{lem:mex}Let $A$ and $B$ be subsets of $\{0,1,2,\ldots\}$
such that $\mex(A)\in B$. Then $\mex(A\cup\{\mex(B)\})=\mex(A)$.
\end{lem}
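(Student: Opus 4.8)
The plan is to prove the two statements $\mex(A\cup\{\mex(B)\})=\mex(A)$ by showing that adding the single element $\mex(B)$ to $A$ cannot change the minimum excludant. Write $m:=\mex(A)$. By definition of $\mex$, we know $m\notin A$, and every ordinal (in fact every natural number) strictly less than $m$ lies in $A$. The hypothesis $m\in B$ tells us $m\neq\mex(B)$, since $\mex(B)\notin B$. Combining these two facts, the element $\mex(B)$ that we adjoin to $A$ is different from $m$, so $m\notin A\cup\{\mex(B)\}$.

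It remains to check that every natural number $k<m$ still lies in $A\cup\{\mex(B)\}$; but this is immediate since $k\in A$ already by the definition of $m=\mex(A)$, hence $k\in A\cup\{\mex(B)\}$. Therefore $A\cup\{\mex(B)\}$ contains every natural number below $m$ and does not contain $m$, which is exactly the statement that $\mex(A\cup\{\mex(B)\})=m=\mex(A)$.

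There is no real obstacle here: the only thing one must be careful about is to invoke the hypothesis $\mex(A)\in B$ at the right moment, namely to guarantee $\mex(B)\neq\mex(A)$ so that the adjoined element does not accidentally fill the gap at $m$. If that hypothesis were dropped, the conclusion could fail (e.g. $A=\{0,2\}$, $B=\{0,2\}$ gives $\mex(A)=1$ but $\mex(A\cup\{\mex(B)\})=\mex(\{0,1,2\})=3$), so the lemma is sharp and the hypothesis is used essentially.
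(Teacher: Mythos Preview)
Your proof is correct and follows exactly the same idea as the paper's own proof: use the hypothesis $\mex(A)\in B$ to conclude $\mex(B)\neq\mex(A)$, from which the equality $\mex(A\cup\{\mex(B)\})=\mex(A)$ is immediate. You have simply spelled out in more detail why that last implication holds.
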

\begin{proof}
Since $\mex(A)\in B$, $\mex(B)\not=\mex(A)$. This implies that $\mex(A\cup\{\mex(B)\})=\mex(A)$.
\end{proof}
\begin{figure}
\begin{tabular}{ccc}
\includegraphics{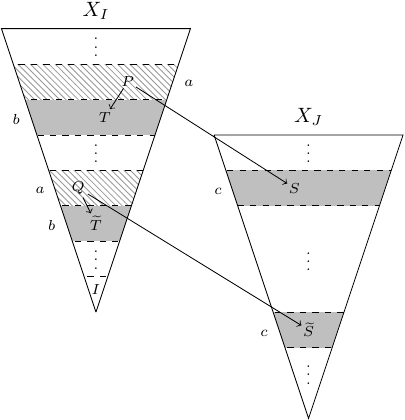} &  & \includegraphics{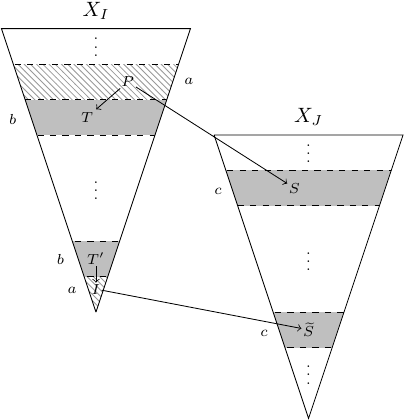}\tabularnewline
(a) $Q\not=I$ & $\qquad$ & (b) $Q=I$\tabularnewline
\end{tabular}

\caption{\label{fig:dngCompat}Unfolded structure diagrams for Proposition~\ref{prop:folding}.
We define $a:=\protect\nim(Q)$, $b:=\protect\nim(T)$ and $c:=\protect\nim(S)$.
Shading types represent parities.}
\end{figure}

\begin{prop}
\label{prop:folding} If $P,Q\in X_{I}\in\mathcal{X}$ and $\pty(P)=\pty(Q)$,
then $\nim(P)=\nim(Q)$.
\end{prop}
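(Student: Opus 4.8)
The plan is to prove the statement by strong induction on $|I \setminus P|$ (equivalently, on the height of $P$ in the game digraph restricted to $X_I$, measured downward from $I$), with a simultaneous outer induction over the intersection subgroups $I \in \mathcal{I}$ ordered so that smaller subgroups are handled first. Since $\nim(P)$ is computed from the nim-numbers of the options of $P$, and options of $P$ lie either in $X_I$ itself or in some $X_J \in \opt(X_I)$ with $J \supsetneq I$, I can assume as an inductive hypothesis that (a) for every $X_J \in \opt(X_I)$, all elements of $X_J$ of a given parity have a common nim-number, and (b) for every $R \in X_I$ with $|I \setminus R| < |I \setminus P|$, the conclusion holds for $R$ in place of $P$. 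The base case is $P = I$ (when $|I \setminus P| = 0$): the only elements of $X_I$ with the same parity as $I$ and no option in $X_I$ must be compared, but in fact $I$ is the unique element of $X_I$ with no option inside $X_I$, so if $Q = I$ there is nothing to prove, and if $Q \ne I$ then $Q$ has strictly smaller $|I \setminus Q|$, which reduces to the inductive step with the roles adjusted.

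The heart of the argument is the inductive step, and here I would split into the two cases depicted in Figure~\ref{fig:dngCompat}: $Q \ne I$ and $Q = I$. In the case $Q \ne I$, pick any $g \in I \setminus Q$ so that $T := Q \cup \{g\} \in \opt(Q) \cap X_I$ has $|I \setminus T| < |I \setminus Q|$; by the inner inductive hypothesis, every element of $X_I$ with parity $\pty(T) = \pty(Q)^{c}$ (the opposite parity) has nim-number $b := \nim(T)$, and every element of $X_I$ with parity $\pty(Q) = \pty(P)$ has a nim-number which I must show equals $a := \nim(Q)$. Now compare $\nopt(P)$ and $\nopt(Q)$. The options of $Q$ and of $P$ that land in a common $X_J \in \opt(X_I)$ contribute, by inductive hypothesis (a), the same set of nim-numbers, namely the nim-number attached to $X_J$ at parity $\pty(P)^c = \pty(Q)^c$; by Corollary~\ref{cor:dngCompat}, $P$ and $Q$ have options in exactly the same $X_J$'s outside $X_I$. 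The options of $Q$ and of $P$ that stay inside $X_I$ all have the opposite parity, hence (by the inner induction, applied to those strictly-smaller elements) all have nim-number $b$. Consequently $\nopt(P)$ and $\nopt(Q)$ differ only possibly by whether $b$ is present, but $b \in \nopt(Q)$ since $T$ is such an option; so $\nopt(P) \cup \{b\}$ and $\nopt(Q)$ have the same mex, and adding or removing $b$ does not change the mex of $\nopt(P)$ as long as $b \ne \mex(\nopt(P))$. This is exactly the content of Lemma~\ref{lem:mex} with $A = \nopt(P) \setminus \{b\}$ restricted appropriately: more precisely, writing $A$ for the set of nim-numbers common to $\nopt(P)$ and $\nopt(Q)$ and $B$ for $\nopt(T)$ (note $a = \mex(\nopt(Q)) \in B$ because the options of $T$ — which include $Q$ together with elements that by induction carry the relevant nim-numbers — realize $a$), Lemma~\ref{lem:mex} gives $\mex(A \cup \{b\}) = \mex(A)$, and one checks $\nim(P) = \mex(A) = a = \nim(Q)$.

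For the case $Q = I$, the element $I$ has no option inside $X_I$, so $\nopt(I)$ consists only of nim-numbers drawn from the options $X_J \in \opt(X_I)$, each contributing its parity-$\pty(I)^c$ nim-number. The position $P \ne I$ additionally has options inside $X_I$, all of the opposite parity $\pty(P)^c = \pty(I)^c$; by the inner induction these options carry nim-numbers among those already attached to $X_I$ at that opposite parity, and among them appears $b := \nim(S)$ for a suitable $S = P \cup \{g\} \in X_I$. Again by Corollary~\ref{cor:dngCompat} the outside-$X_I$ options of $P$ match those of $I$ exactly. So $\nopt(P) = \nopt(I) \cup \{\text{opposite-parity nim-numbers inside } X_I\}$; invoking Lemma~\ref{lem:mex} once more (with $A = \nopt(I)$ and $B$ the option-set realizing $\mex(A)$ at the appropriate vertex) shows the added opposite-parity values do not disturb the mex, hence $\nim(P) = \mex(\nopt(I)) = \nim(I) = \nim(Q)$.

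The step I expect to be the main obstacle is the bookkeeping in the first case: one must verify carefully that the mex of $\nopt(P)$ is genuinely unaffected by the presence or absence of the extra opposite-parity value $b$, which requires knowing that $\mex(\nopt(P))$ itself has the \emph{same} parity as $P$ — intuitively true because an option changes parity and nim-numbers along a same-parity chain interleave correctly, but it needs to be extracted cleanly from Lemma~\ref{lem:mex} rather than hand-waved. Concretely, the delicate point is establishing $\mex(\nopt(Q)) \in \nopt(T)$, i.e. that the nim-number of $Q$ reappears as the nim-number of some option of $T$; this is where the inner inductive hypothesis (that same-parity elements of $X_I$ below a certain height share a nim-number, and that $T$'s options include such elements together with $Q$) must be deployed with care. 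Once that membership is in hand, the two applications of Lemma~\ref{lem:mex} close the argument.
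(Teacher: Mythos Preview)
Your overall architecture matches the paper's: structural induction, Corollary~\ref{cor:dngCompat} to align the external options, and Lemma~\ref{lem:mex} to absorb the one extra internal nim-value. But your Case~1 ($Q\ne I$) is tangled and contains an actual error, while the paper dispatches it directly.

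The error: you write that the options of $T=Q\cup\{g\}$ ``include $Q$.'' They do not---options are obtained by \emph{adding} an element, so $Q\notin\opt(T)$. Your justification that $a=\nim(Q)\in\nopt(T)$ therefore fails as stated. (It can be repaired: an option $T\cup\{h\}\in X_I$ has the same parity as $Q$ and, by the inner induction, nim-number $a$---but this needs $T\ne I$, which you have not ensured.)

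More to the point, none of this is needed. The statement is symmetric in $P$ and $Q$, so take without loss of generality $|P|\le|Q|$. Then $Q\ne I$ forces $P\ne I$ as well, so \emph{both} $P$ and $Q$ have an option inside $X_I$. Combined with Corollary~\ref{cor:dngCompat} for the options outside $X_I$ and the inner induction for the options inside, you get $\nopt(P)=\nopt(Q)$ outright. Lemma~\ref{lem:mex} plays no role in this case.

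Lemma~\ref{lem:mex} is needed only in Case~2 ($Q=I$, $P\ne I$), where $\nopt(I)\subseteq\nopt(P)$ but $\nopt(P)$ picks up the extra value $\nim(T')$ for any $T'\in X_I$ with $I\in\opt(T')$. The paper's application is crisp: set $A:=\nopt(I)$ and $B:=\nopt(T')$; then $\mex(A)=\nim(I)\in B$ \emph{because $I$ is literally an option of $T'$}, and $\nim(T')=\mex(B)$, so Lemma~\ref{lem:mex} yields $\mex(\nopt(P))=\mex(A\cup\{\mex(B)\})=\mex(A)=\nim(I)$. Your Case~2 sketch gestures at this but leaves the choice of $B$ vague (``the option-set realizing $\mex(A)$ at the appropriate vertex''); the concrete choice $B=\nopt(T')$ with $T'$ one step below $I$ is exactly what makes the hypothesis $\mex(A)\in B$ immediate, with no parity reasoning required.
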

\begin{proof}
We proceed by structural induction. Our inductive hypothesis states
that if $\pty(M)=\pty(N)$ and either $M,N\in X_{I}$ with $|M|,|N|>\min\{|P|,|Q|\}$
or $M,N\in X_{J}\in\opt(X_{I})$ for some $J\in\mathcal{I}$, then
$\nim(M)=\nim(N)$. Without loss of generality, assume that $|P|\leq|Q|$.

We will consider two cases indicated by the diagrams in Figure~\ref{fig:dngCompat}:
$Q\neq I$ and $Q=I$. Each figure is referred to as an ``unfolded
structure diagram'' and is meant to help visualize the structure
of the proof. In the figures, each triangle represents a structure
class and each horizontal band represents the collection of subsets
from the given structure class that have the same size. An arrow from
one set to another in the figure indicates that the second set is
an option of the first set. In the first case ($Q\neq I$), we will
show that $\nopt(P)=\nopt(Q)$. The second case ($Q=I$) is more complicated
as we will only have $\nopt(Q)\subseteq\nopt(P)$. In this case, we
will make use of Lemma~\ref{lem:mex} to conclude that we still have
$\mex(\nopt(P))=\mex(\nopt(I))$.

First, assume that $Q\neq I$. Every option of $P$ is either an element
of $X_{I}$ or of some $X_{J}\in\opt(X_{I})$. If $T\in\opt(P)\cap X_{I}$,
then we can choose $\widetilde{T}\in\opt(Q)\cap X_{I}$. By induction,
$\nim(T)=\nim(\widetilde{T})$ since $\pty(T)=\pty(\tilde{T})$. On
the other hand, if $S\in\opt(P)\cap X_{J}$ for some $X_{J}\in\opt(X_{I})$,
then by Corollary~\ref{cor:dngCompat}, there exists $\tilde{S}\in\opt(Q)\cap X_{J}$.
Since $\pty(S)=\pty(\tilde{S})$, we must have $\nim(S)=\nim(\tilde{S})$
by induction. We have shown that $\nopt(P)\subseteq\nopt(Q)$. A similar
argument shows that $\nopt(Q)\subseteq\nopt(P)$.

Now, assume that $Q=I$. If $P=Q=I$, then the desired result follows
trivially, so assume that $P\neq Q=I$. Note that this implies that
$|P|<|Q|$. In this case, we might not have $\nopt(Q)=\nopt(P)$ because
$\opt(I)\cap X_{I}=\emptyset$. Instead we fix a $T^{\prime}\in X_{I}$
such that $I\in\opt(T^{\prime})$. Such a $T'$ exists since $I\not=\emptyset$.
We are going to show that $\nopt(P)=\nopt(I)\cup\{\nim(T')\}$. 

First, we verify $\nopt(P)\subseteq\nopt(I)\cup\{\nim(T')\}$. Every
option of $P$ is either an element of $X_{I}$ or of some $X_{J}\in\opt(X_{I})$.
In the latter case, if $S\in\opt(P)\cap X_{J}$ for some $X_{J}\in\opt(X_{I})$,
then there exists $\tilde{S}\in\opt(Q)\cap X_{J}$ by Corollary~\ref{cor:dngCompat}.
Since $\pty(S)=\pty(\tilde{S})$, we must have $\nim(S)=\nim(\tilde{S})\in\nopt(I)$
by induction. In the former case, if $T\in\opt(P)\cap X_{I}$, then
$\nim(T)=\nim(T')$ by induction and $\nim(T)\in\{\nim(T')\}$.

Now, we verify $\nopt(P)\supseteq\nopt(I)\cup\{\nim(T')\}$. Suppose
$\widetilde{S}\in\opt(I)$. Then $\widetilde{S}\in X_{J}$ for some
$X_{J}\in\opt(X_{I})$ since the only options of $I$ must exist outside
$X_{I}$. Then by Corollary~\ref{cor:dngCompat}, there exists $S\in\opt(P)\cap X_{J}$.
Since $\pty(\widetilde{S})=\pty(S)$, we must have $\nim(\widetilde{S})=\nim(S)$
by induction. This implies that $\nopt(I)\subseteq\nopt(P)$. Since
$P\neq I$, there exits $T\in\opt(P)\cap X_{I}$. By induction, $\nim(T')=\nim(T)$
and so $\{\nim(T')\}\subseteq\nopt(P)$ by induction.

Finally
\[
\begin{aligned}\nim(P) & =\mex(\nopt(P))=\mex(\nopt(I)\cup\{\nim(T')\})\\
 & =\mex(\nopt(I)\cup\{\mex(\nopt(T'))\})\\
 & =\mex(\nopt(I))=\nim(Q)
\end{aligned}
\]
by Lemma~\ref{lem:mex} since $\mex(\nopt(I))=\nim(I)\in\nopt(T')$.
\end{proof}
The upshot of Proposition~\ref{prop:folding} is that each structure
class is associated with two nim-numbers. In light of this, we can
append this information to a structure digraph to form a (folded)
\emph{structure diagram}, which is visualized as follows. A structure
class $X_{I}$ is represented by a triangle pointing down if $I$
is odd and by a triangle pointing up if $I$ is even. The triangles
are divided into a smaller triangle and a trapezoid. The smaller triangle
represents the odd positions of $X_{I}$ and the trapezoid represents
the even positions of $X_{I}$. The numbers are the nim-numbers of
these positions. There is a directed edge from $X_{I}$ to $X_{J}$
provided $X_{J}\in\opt(X_{I})$.

For a nontrivial group $G$, $\Phi(G)$ and $\Phi(G)\setminus\{e\}$
are positions in $X_{\Phi(G)}$. Hence $X_{\Phi(G)}$ contains both
even and odd positions. Corollary~\ref{cor:dngCompat} now implies
that every structure class contains both even and odd positions. The
nim-number of the game can be easily read from the structure diagram.
It is the number in the trapezoid part of the triangle representing
the source vertex of the structure digraph. 

\begin{figure}
\begin{tabular}{ccccc}
\includegraphics{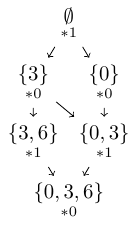} & $\qquad$ & \includegraphics{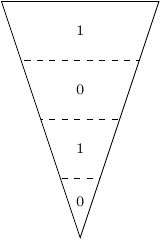} & $\qquad$ & \includegraphics{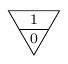}\tabularnewline
(a) &  & (b) &  & (c)\tabularnewline
\end{tabular}\caption{\label{fig:Z9}Representative game digraph, unfolded structure diagram,
and structure diagram for $\protect\dng(\mathbb{Z}_{9})$. }
\end{figure}

\begin{example}
Figure~\ref{fig:Z9} shows a representative game digraph and the
corresponding unfolded and folded structure diagrams for $\dng(\mathbb{Z}_{9})$.
Since $\mathbb{Z}_{9}$ only has a single maximal subgroup, there
is a unique structure class. The small triangle in the structure diagram
represents the collection of odd positions with nim-number $0$. This
collection includes the representative positions $\{0,3,6\},\{3\}$,
and $\{0\}$. The trapezoid represents the collection of even positions
with nim-number $1$. This collection includes positions $\{3,6\},\{0,3\}$,
and $\emptyset$. Every position in a collection is connected to another
position on the next level. The chain shown in the figure ends on
an odd level with the terminal position $I=\{0,3,6\}$. This is why
the large triangle representing $X_{I}$ points down and the small
triangle representing the odd positions is on the bottom. 

The structure diagram can be used to find the nim-numbers. Since the
only terminal position is in the smaller triangle, these positions
have nim-number $0$. The positions in the trapezoid are only connected
to the positions in the smaller triangle so they have nim-number $\mex\{0\}=1$.
The non-terminal positions in the smaller triangle are only connected
to positions in the trapezoid so they have nim-number $\mex\{1\}=0$.
We conclude that $\dng(\mathbb{Z}_{9})=*1$.
\end{example}
We want to recognize similar structure classes. 
\begin{defn}
\label{def:simplifiedStructureDiagram}The \emph{type} of the structure
class $X_{I}$ is the triple 
\[
\type(X_{I}):=(\pty(I),\nim(P),\nim(Q))
\]
 where $P,Q\in X_{I}$ with $\pty(P)=0$ and $\pty(Q)=1$. The \emph{option
type} of $X_{I}$ is the set 
\[
\otype(X_{I}):=\{\type(X_{K})\mid X_{K}\in\opt(X_{I})\},
\]
and the \emph{full option type} of $X_{I}$ is the set
\[
\Otype(X_{I}):=\otype(X_{I})\cup\{\type(X_{I})\}.
\]
We say $X_{I}$ and $X_{J}$ are \emph{type equivalent} if $\type(X_{I})=\type(X_{J})$
and $\Otype(X_{I})=\Otype(X_{J})$. 
\end{defn}
\begin{figure}
\begin{tabular}{ccc}
$X_{I}$ & \raisebox{-4mm}{\includegraphics{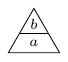}} & \raisebox{-4mm}{\includegraphics{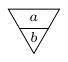}}\tabularnewline
$\type(X_{I})$ & $(0,a,b)$ & $(1,a,b)$\tabularnewline
\end{tabular}\caption{\label{fig:genericTypes}Visualization of structure classes and their
corresponding types.}
\end{figure}

Once the structure digraph is known, the types of the structure classes
can be computed recursively from the bottom up using the formulas
$\type(X_{I})=(\pty(I),a,b)$, where
\[
\begin{gathered}A=\{\tilde{a}\mid(\epsilon,\tilde{a},\tilde{b})\in\otype(X_{I})\},\quad B=\{\tilde{b}\mid(\epsilon,\tilde{a},\tilde{b})\in\otype(X_{I})\},\\
a:=\mex(B),\,b:=\mex(A\cup\{a\})\text{ if }\pty(I)=0\\
b:=\mex(A),\,a:=\mex(B\cup\{b\})\text{ if }\pty(I)=1.
\end{gathered}
\]

Figure~\ref{fig:genericTypes} shows the type-dependent visualization
of structure classes that occur as nodes of a structure diagram. Note
that if $X_{I}$ is terminal, then $\type(X_{I})$ must be either
$(0,0,1)$ or $(1,1,0)$ depending on the parity of $X_{I}$. A structure
digraph can be quite complicated, but we can simplify it by identifying
some vertices.
\begin{defn}
The \emph{simplified structure digraph} of $\dng(G)$ is built from
the structure digraph of $\dng(G)$ by the identification of type
equivalent structure classes followed by the removal of any resulting
loops. The \emph{simplified structure diagram }is built from the structure
diagram using the same identification process. 
\end{defn}
\begin{figure}
\begin{tabular}{cc}
\raise20pt\hbox{\includegraphics{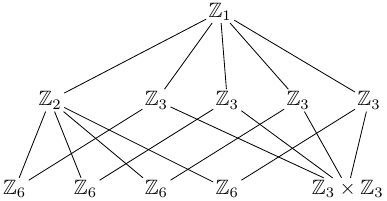}} & \includegraphics{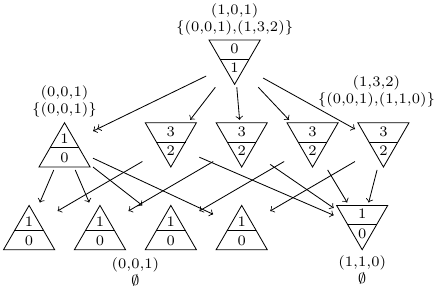}\tabularnewline
(a) Intersection subgroup poset. Subgroups & (b) Structure diagram with\tabularnewline
\hspace{6mm}are denoted by their isomorphism types. & \hspace{-7mm}$\type$ and $\otype$.\tabularnewline
 & \tabularnewline
\includegraphics{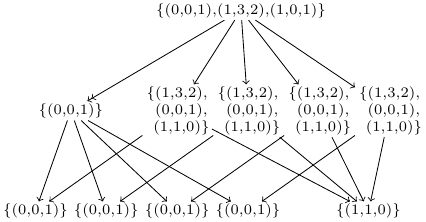} & \includegraphics{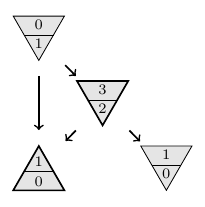}\tabularnewline
(c) The $\Otype$ of each structure class. & (d) Simplified structure diagram.\tabularnewline
\end{tabular}

\caption{\label{fig:49dng}The process for obtaining the simplified structure
diagram for $\protect\dng(\mathbb{Z}_{6}\times\mathbb{Z}_{3})$. The
double headed arrow $X_{\Phi(G)}\twoheadrightarrow X_{\mathbb{Z}_{2}}$
indicates that $X_{\Phi(G)}$ is also connected to the options of
$X_{\mathbb{\mathbb{Z}}_{2}}$. }
 
\end{figure}

\begin{example}
Figure~\ref{fig:49dng} illustrates the main steps of finding the
simplified structure diagram of $\dng(G)$ with $G=\mathbb{Z}_{6}\times\mathbb{Z}_{3}$.
Subfigure~(a) shows the Hasse diagram of the poset of intersection
subgroups. For this group every proper subgroup is an intersection
subgroup. Each intersection subgroup corresponds to a structure class
shown in Subfigure~(b). 

The arrows coming out of a structure class $X_{I}$ are determined
by finding the structure classes containing $I\cup\{g\}$ for $g\in G\setminus I$
according to Definition~\ref{def:structureDigraph}. Next, we compute
$\type(X_{I})$ and $\otype(X_{I})$ for all $X_{I}$. These values
are also shown in Subfigure~(b). For example, $X_{\Phi(G)}$ is the
structure class at the top of the diagram with $\type(X_{\Phi(G)})=(1,0,1)$
and $\otype(X_{\Phi(G)})=\{(0,0,1),(1,3,2)\}$. 

Subfigure~(c) depicts $\Otype(X_{I})$ for all $X_{I}$, which are
computed as the union of $\type(X_{I})$ and $\otype(X_{I})$. Note
that each structure class $X_{I}$ with $\type(X_{I})=(0,0,1)$ have
the same $\Otype(X_{I})$ even though they do not have the same $\otype(X_{I})$. 

The final step is the identification of the structure classes according
to $\type$ and $\Otype$. This results in the simplified structure
diagram of Subfigure~(d). We have shaded the triangles in the simplified
structure diagram to signify that we have identified type equivalent
structure classes. In addition, if more than one structure class has
been identified, the boundary of the corresponding triangle is bold. 
\end{example}
\begin{prop}
The type of a structure class $X_{I}$ of $\dng(G)$ is in 
\[
T=\{t_{1}:=(0,0,1),t_{2}:=(1,0,1),t_{3}:=(1,1,0),t_{4}:=(1,3,2)\}.
\]
\end{prop}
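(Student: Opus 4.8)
The plan is to induct on the structure digraph, working from terminal structure classes upward, and to track which of the four candidate types can occur and what options such a class can have. The key observation is that, by Proposition~\ref{prop:folding}, each structure class $X_I$ carries exactly two nim-numbers, $\nim(P)$ for even $P$ and $\nim(Q)$ for odd $Q$, and these two numbers are computed as mutual mex's: the nim-number of an even position is $\mex$ of the nim-numbers of its odd options (which live either in $X_I$ itself or in option classes), and symmetrically for odd positions. Since options always have opposite parity (noted at the end of Section~\ref{sec:Preliminaries}), the even nim-value of $X_I$ depends only on the odd nim-values of $X_I$ and of its option classes, and vice versa. So I would first record the base case: if $X_I$ is terminal then $I$ is a maximal subgroup with no options, so its terminal position $I$ has nim-number $0$; if $\pty(I)=0$ this forces $\type(X_I)=(0,0,1)=t_1$, and if $\pty(I)=1$ it forces $\type(X_I)=(1,1,0)=t_3$.

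Next I would handle the inductive step for a non-terminal $X_I$, splitting on $\pty(I)$. Recall that $I$ itself is the unique element of $X_I$ with no option inside $X_I$, while every other $P\in X_I$ does have an option in $X_I$. First suppose $\pty(I)=1$, so the odd nim-value of $X_I$ is $\nim(I)$ and $I$'s options all lie in option classes; the even positions of $X_I$ have odd options both inside $X_I$ (value $\nim(I)$) and in option classes. One shows the even nim-value lies in $\{0,1,2,3\}$ and the odd value in $\{0,1\}$, and then a short case analysis on whether $0$, $1$ appear among the option-class nim-values pins the type down to $t_2=(1,0,1)$, $t_3=(1,1,0)$, or $t_4=(1,3,2)$. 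The point is that the odd nim-value $\nim(I)=\mex$ of a set of even nim-values; by the induction hypothesis every option class has even nim-value in $\{0,1,2,3\}$ and in fact (from the list $T$) in $\{0,1,2\}$, so $\nim(I)\in\{0,1,2,3\}$, but one checks $\nim(I)\le 1$ because an even position of $X_I$ adjacent to $I$ contributes $\nim(I)$ back and the mex structure forces the small value; the even nim-value is then $\mex$ of a set containing $\nim(I)$ together with option-class odd values, all lying in $\{0,1,2\}$ by the list, giving a value in $\{0,1,2,3\}$, and matching $(\nim(\text{even}),\nim(\text{odd}))$ against $T$ leaves only $t_2,t_3,t_4$.

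Finally suppose $\pty(I)=0$, so $X_I$ points up: the even nim-value of $X_I$ is $\nim(I)$, computed as $\mex$ of odd option-class values, and the odd positions of $X_I$ have even options inside $X_I$ (value $\nim(I)$) plus even option-class values. The same kind of bookkeeping — using the induction hypothesis that every option class already has type in $T$, hence even nim-value in $\{0,1,2\}$ and odd nim-value in $\{0,1,2\}$ — forces $\nim(I)\in\{0,1\}$ and then the odd value of $X_I$ to be $0$ or $1$; comparing with $T$ shows the only possibility with $\pty(I)=0$ is $t_1=(0,0,1)$. (This is also where one uses Lemma~\ref{lem:mex}-style reasoning about the interplay of the two values, exactly as in the proof of Proposition~\ref{prop:folding}.) I expect the main obstacle to be the even-parity non-terminal case: verifying that the even nim-value genuinely cannot exceed $3$ and cannot equal $2$ while the odd value is forced to be exactly the complementary small value, which requires carefully using that $\nim(I)$ re-enters the mex computation for adjacent even positions and that every option class's contribution is constrained by the already-established list $T$. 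Once that bookkeeping is done, the conclusion that every type lies in $\{t_1,t_2,t_3,t_4\}$ follows by structural induction.
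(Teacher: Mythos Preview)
Your inductive framework is right, and the base case is fine, but there is a genuine gap in the even-parity step. You claim that when $\pty(I)=0$ the bookkeeping ``forces $\nim(I)\in\{0,1\}$'', but it does not: if an even $X_I$ were allowed to have, say, options of types $t_1$ and $t_3$, then the odd option-values would be $\{1,0\}$ and $\nim(I)=\mex\{0,1\}=2$, giving a type $(0,2,\ldots)\notin T$. The induction hypothesis alone does not rule this out.

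What is missing is the single group-theoretic observation that drives the paper's proof: if $X_J\in\opt(X_I)$ then $I$ is a subgroup of $J$, so $|I|$ divides $|J|$; hence $\pty(X_J)=1$ forces $\pty(X_I)=1$. Equivalently, an even structure class can only have even options, so its $\otype$ is contained in $\{t_1\}$. With this Lagrange-type constraint in hand, the even case is immediate (only $t_1$ options, giving $t_1$), and for the odd case one simply runs through the finitely many subsets of $T$ for $\otype(X_I)$ and computes the resulting type---exactly the table in the paper. Your proposal never invokes this divisibility fact, and without it the ``short case analysis'' you allude to cannot close. (A smaller slip: the even nim-values appearing in $T$ are $\{0,1,3\}$, not $\{0,1,2\}$ as you wrote.)
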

\begin{proof}
Recall that the type of a terminal structure class is either $t_{1}$
or $t_{3}$. We show that if $\otype(X_{I})\subseteq T$, then $\type(X_{I})\in T$,
as well. This implies the statement by structural induction. 

If $X_{J}$ is an option of $X_{I}$, then $I$ is a subgroup of $J$,
and so $\pty(X_{J})=1$ implies $\pty(X_{I})=1$. Hence if $(1,a,b)\in\otype(X_{I})$
for some $a$ and $b$, then $\type(X_{I})$ must be of the form $(1,c,d)$.
The following table shows the possibilities for $\otype(X_{I})$ and
$\type(X_{I})$:

\smallskip\centerline{%
\begin{tabular}{|c|c|c|c|c|c|c|c|}
\cline{1-2} \cline{4-5} \cline{7-8} 
$\otype(X_{I})$ & $\type(X_{I})$ &  & $\otype(X_{I})$ & $\type(X_{I})$ &  & $\otype(X_{I})$ & $\type(X_{I})$\tabularnewline
\cline{1-2} \cline{4-5} \cline{7-8} 
$\{t_{1}\}$ & $t_{1}$ or $t_{2}$ &  & $\{t_{1},t_{3}\}$ & $t_{4}$ &  & $\{t_{1},t_{2},t_{3}\}$ & $t_{4}$\tabularnewline
\cline{1-2} \cline{4-5} \cline{7-8} 
$\{t_{2}\}$ & $t_{2}$ &  & $\{t_{1},t_{4}\}$ & $t_{2}$ &  & $\{t_{1},t_{2},t_{4}\}$ & $t_{2}$\tabularnewline
\cline{1-2} \cline{4-5} \cline{7-8} 
$\{t_{3}\}$ & $t_{3}$ &  & $\{t_{2},t_{3}\}$ & $t_{4}$ &  & $\{t_{1},t_{3},t_{4}\}$ & $t_{4}$\tabularnewline
\cline{1-2} \cline{4-5} \cline{7-8} 
$\{t_{4}\}$ & $t_{3}$ &  & $\{t_{2},t_{4}\}$ & $t_{2}$ &  & $\{t_{2},t_{3},t_{4}\}$ & $t_{4}$\tabularnewline
\cline{1-2} \cline{4-5} \cline{7-8} 
$\{t_{1},t_{2}\}$ & $t_{2}$ &  & $\{t_{3},t_{4}\}$ & $t_{3}$ &  & $\{t_{1},t_{2},t_{3},t_{4}\}$ & $t_{4}$\tabularnewline
\cline{1-2} \cline{4-5} \cline{7-8} 
\end{tabular}$\quad$%
\begin{tabular}{|c|c|}
\hline 
\includegraphics[scale=0.7]{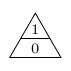} & \includegraphics[scale=0.7]{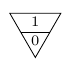}\tabularnewline
$t_{1}$ & $t_{3}$\tabularnewline
\hline 
\includegraphics[scale=0.7]{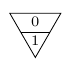} & \includegraphics[scale=0.7]{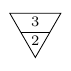}\tabularnewline
$t_{2}$ & $t_{4}$\tabularnewline
\hline 
\end{tabular}}\smallskip

\noindent We show the calculation for the case $\otype(X_{I})=\{t_{4}\}$
which is the fourth case in the first column of the table. Since a
structure class with type $t_{4}=(1,3,2)$ is odd, $X_{I}$ must be
odd, and so $\type(X_{I})=(1,c,d)$ as shown below:

\noindent \begin{center}\includegraphics{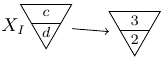}\end{center}

\noindent We know that a position and any of its option have the opposite
parity. So, $d=\mex\{3\}=0$, and hence $c=\mex\{d,2\}=\mex\{0,2\}=1$.
Thus, $\type(X_{I})=(1,1,0)=t_{3}$.

Now we show the calculation for the case $\otype(X_{I})=\{t_{1},t_{4}\}$
which is the second case in the second table. Again since $t_{4}\in\otype(X_{I})$,
$\type(X_{I})$ is of the form $(1,c,d)$. Then $d=\mex\{0,3\}=1$,
and hence $c=\mex\{d,1,2\}=\mex\{1,2\}=0$. Thus, $\type(X_{I})=(1,0,1)=t_{2}$. 

The rest of the calculations in the table are done similarly.
\end{proof}
The next three results strengthen Corollary~\ref{cor:HararyBarnes}.
\begin{cor}
\label{cor:nim-value-dng-013}The nim-number of $\dng(G)$ is $0$,
$1$, or $3$.
\end{cor}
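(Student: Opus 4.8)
The plan is to read the nim-number of $\dng(G)$ directly off the simplified structure diagram by tracking the type of the source vertex $X_{\Phi(G)}$. By Proposition~\ref{prop:folding}, the nim-number of $\dng(G)$ equals the nim-number of the even-parity positions of the structure class $X_{\Phi(G)}$ containing the starting position $\emptyset$; that is, if $\type(X_{\Phi(G)}) = (\pty(\Phi(G)), \nim(P), \nim(Q))$ with $\pty(P) = 0$ and $\pty(Q) = 1$, then $\dng(G) = *\nim(P)$. So the task reduces to determining which values $\nim(P)$ can take over the allowed types.

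The key step is the preceding proposition, which asserts that every structure class of $\dng(G)$ has type in $T = \{t_1 = (0,0,1), t_2 = (1,0,1), t_3 = (1,1,0), t_4 = (1,3,2)\}$. Reading off the middle coordinate (the nim-number of an even position) across these four types, the only possible values are $0$, $0$, $1$, and $3$. Since $X_{\Phi(G)}$ is itself a structure class, its type lies in $T$, and hence $\dng(G) = *\nim(P)$ with $\nim(P) \in \{0, 1, 3\}$. One should note the degenerate case: if $G$ is trivial, $\dng(G)$ is not played, so we tacitly assume $G$ nontrivial (or simply observe that there is nothing to prove). If $G$ is nontrivial, $\Phi(G)$ is a genuine intersection subgroup and $X_{\Phi(G)}$ exists, so the argument applies.

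There is essentially no obstacle here — the corollary is a one-line consequence of the type classification proposition once one recalls from Proposition~\ref{prop:folding} and the discussion following it that the nim-number of the game is the number in the trapezoid of the source triangle, i.e., the second coordinate of $\type(X_{\Phi(G)})$. The only thing worth spelling out is why the value $2$ (the third coordinate of $t_4$) does not arise: it is the nim-number of an \emph{odd} position, never of the even starting position $\emptyset$, so it is irrelevant to the nim-number of the game. The statement that this \emph{strengthens} Corollary~\ref{cor:HararyBarnes} is because that earlier corollary only distinguished $*0$ from non-$*0$, whereas here we pin the non-trivial outcomes down to exactly $*1$ or $*3$.

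In summary, I would write: assume $G$ nontrivial (otherwise $\dng(G)$ is undefined), recall that $\dng(G) = *\nim(P)$ for $P \in X_{\Phi(G)}$ with $\pty(P) = 0$, invoke the type classification to conclude $\type(X_{\Phi(G)}) \in T$, and read off that the second coordinate — hence $\nim(P)$ — lies in $\{0, 1, 3\}$.
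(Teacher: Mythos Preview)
Your proposal is correct and follows exactly the paper's approach: the paper's proof is the single line ``The nim-number of $\dng(G)$ is the second digit of $\type(X_{\Phi(G)})$,'' relying on the preceding type-classification proposition. Your write-up simply unpacks this sentence with more detail.
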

\begin{proof}
The nim-number of $\dng(G)$ is the second digit of $\type(X_{\Phi(G)})$.
\end{proof}
\begin{prop}
\label{prop:DNGodd}If $G$ is nontrivial with odd order, then the
simplified structure diagram of $\dng(G)$ is 

\centerline{\includegraphics{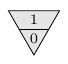}}

\noindent and hence $\dng(G)=*1$.
\end{prop}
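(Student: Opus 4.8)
The plan is to argue that when $G$ has odd order, every structure class has type $t_1 = (0,0,1)$ or $t_2 = (1,0,1)$, so that after identification the simplified structure diagram collapses to the claimed two-vertex picture, and then read off the nim-number from $\type(X_{\Phi(G)})$. First I would note the key parity fact: since $|G|$ is odd, every subgroup of $G$ has odd order, so every intersection subgroup $I \in \mathcal I$ satisfies $\pty(I) = 1$. Hence $\pty(X_I) = 1$ for all structure classes, which immediately rules out $t_1$ as a \emph{type} — wait, $t_1 = (0,0,1)$ has first coordinate $0$, so in fact \emph{no} structure class can have type $t_1$ here; the only candidates from the list $T = \{t_1, t_2, t_3, t_4\}$ with first coordinate $1$ are $t_2, t_3, t_4$. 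So the real content is to show that $t_3$ and $t_4$ never occur.

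The main step is an induction, mirroring the proof of the preceding proposition, showing $\type(X_I) = t_2 = (1,0,1)$ for every $I \in \mathcal I$. For the base case, a terminal structure class $X_I$ (where $I$ is a maximal subgroup) has type $t_1$ or $t_3$ depending on $\pty(I)$; since $\pty(I) = 1$, we get $t_3 = (1,1,0)$. Hmm — this shows terminal classes have type $t_3$, not $t_2$, so the claim "$\type = t_2$ always" is false, and I need to be more careful. The correct statement is: the \emph{non-maximal} intersection subgroups, and in particular $\Phi(G)$, have type $t_2$, while the maximal ones have type $t_3$; these two types get identified into the two vertices of the diagram. So the induction should show: if $X_I$ is not terminal then $\otype(X_I) \subseteq \{t_2, t_3\}$ (by induction all options have type $t_2$ or $t_3$), and a short $\mex$ computation — exactly as in the table of the previous proof, rows $\{t_3\}$, and the combined case — gives $\type(X_I) = t_2$. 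Concretely: every non-terminal $I$ has at least one option, every option has type $t_2$ or $t_3$; the even positions of $X_I$ have options that are odd positions in $X_I$ (nim $0$, by induction within the class once we establish it) together with odd positions of option classes (nim $\nim(Q)$ for the option's odd representative, which is $1$ if the option has type $t_2$ and $0$ if type $t_3$); and the odd positions of $X_I$ other than $I$ itself have options that are even positions (nim... ) — this is precisely the bookkeeping already carried out abstractly via $\opt$ and $\Otype$, so I would invoke the previous proposition's table rather than redo it: one checks that whenever $\otype(X_I) \subseteq \{t_2, t_3\}$ and $X_I$ is odd, the table entry is $t_2$ (rows $\{t_2\}$, $\{t_3\}$, $\{t_2,t_3\}$ give $t_2$, $t_3$, $t_4$ — so I must additionally rule out $t_3$ non-terminal). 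Since a non-terminal odd class has options strictly below it and $\Phi(G)$ is the unique source, the case that produces $t_3$ (namely $\otype = \{t_3\}$, a class all of whose options are terminal-type) still yields type $t_3$, which is fine — it just means all such classes are type-equivalent to the terminal ones.

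So the cleaner formulation I would actually write: \emph{every} structure class of $\dng(G)$ has type $t_2$ or $t_3$ (induction using the table, since the only way to leave $\{t_2, t_3\}$ would be via $\otype \supseteq \{t_1\}$ or a $t_4$, but $t_1$ is impossible by the parity remark and $t_4$ arises only from $\otype$ containing... checking the table, $t_4$ is produced by $\{t_1,t_3\}$, $\{t_2,t_3\}$, etc. — so I need $t_3$ not to appear as an option of a class that also sees a $t_2$, OR I need to track that $t_4$ once excluded stays excluded). The honest main obstacle is exactly this: the table shows $\{t_2, t_3\}$ is \emph{not} closed — $\otype(X_I) = \{t_2, t_3\}$ forces $\type(X_I) = t_4$. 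The resolution must use the specific structure of odd groups beyond mere parity: I expect one shows that in a group of odd order, if a non-terminal structure class has a terminal-type ($t_3$) option then \emph{all} its options are $t_3$ (equivalently, the intersection-subgroup lattice below any non-maximal $I$ is "uniform" enough), perhaps because every maximal subgroup contains $\Phi(G)$ and the relevant $\mex$ computations with all-even subgroups never mix a $1$ and a $2$. This uniformity — pinning down exactly why $t_4$ (and the mixed option set that creates it) cannot occur when $|G|$ is odd — is the crux, and once it is in hand the two remaining types $t_2$ (with source $X_{\Phi(G)}$) and $t_3$ (terminal) are each collapsed by type-equivalence to a single vertex, giving the displayed diagram and $\dng(G) = *1$ since $\type(X_{\Phi(G)}) = t_2 = (1,0,1)$ has second coordinate $1$.
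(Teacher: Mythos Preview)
Your proposal contains a genuine gap, and the difficulty you struggle with is self-created. You correctly observe that all structure classes are odd, so $t_1$ is excluded. But you then guess that non-terminal classes have type $t_2=(1,0,1)$ while terminal ones have type $t_3=(1,1,0)$, and spend the rest of the argument wrestling with the fact that $\{t_2,t_3\}$ is not closed under the table (since $\otype=\{t_2,t_3\}$ produces $t_4$). You end by conceding this is unresolved and speculating that some extra structural fact about odd groups is needed.

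No such fact is needed, because $t_2$ never appears. The paper's proof is a one-line induction: terminal classes have type $t_3$ (as you correctly compute), and the table row $\otype(X_I)=\{t_3\}$ yields $\type(X_I)=t_3$. By induction every $X_I$ has $\type(X_I)=t_3$ and $\Otype(X_I)=\{t_3\}$, so all structure classes are type equivalent and the simplified diagram collapses to a \emph{single} vertex, not the two-vertex picture you assume. The nim-number is then the second coordinate of $t_3=(1,1,0)$, namely $1$.

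There is also an arithmetic slip at the end that happens to mask the error: you write that $t_2=(1,0,1)$ ``has second coordinate $1$'', but its second coordinate is $0$. Had your analysis been correct and $X_{\Phi(G)}$ really had type $t_2$, you would conclude $\dng(G)=*0$, contradicting the statement.
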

\begin{proof}
It is clear that every structure class is odd. Structural induction
on the structure classes shows that $\type(X_{I})=(1,1,0)$ and $\Otype(X_{I})=\{(1,1,0)\}$
for all $X_{I}\in\mathcal{X}$. 
\end{proof}
\begin{prop}
\label{prop:DNGevenFrat}If $G$ has an even Frattini subgroup, then
the simplified structure diagram of $\dng(G)$ is 

\centerline{\includegraphics{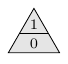}}

\noindent and hence $\dng(G)=*0$.
\end{prop}
\begin{proof}
Every structure class is even since every intersection subgroup contains
the Frattini subgroup. Structural induction on the structure classes
shows that $\type(X_{I})=(0,0,1)$ and $\Otype(X_{I})=\{(0,0,1)\}$
for all $X_{I}\in\mathcal{X}$.
\end{proof}

\section{Achievement games\label{sec:GEN}}

In this section we study the achievement game $\gen(G)$ on the finite
group $G$. For achievement games, we must include an additional structure
class $X_{G}$ containing terminal positions. A subset $S\subseteq G$
belongs to $X_{G}$ whenever $S$ generates $G$ while $S\setminus\{s\}$
does not for some $s\in S$. Note that this is a slightly abusive
notation because, for example, $X_{G}$ does not always contain $G$.
For nontrivial groups, the positions of $\gen(G)$ are the positions
of $\dng(G)$ together with the elements of $X_{G}$. If $G$ is the
trivial group, then $\Phi(G)=G$ is not a game position of $\gen(G)=*0$
and $X_{G}=\{\emptyset\}$ is the only structure class. The following
is immediate. 
\begin{prop}
The set $\mathcal{Y}:=\mathcal{X}\cup\{X_{G}\}$ is a partition of
the set of game positions of $\gen(G)$. 
\end{prop}
We show that the partition $\mathcal{Y}$ is compatible with the option
relationship between positions. The next result is analogous to Proposition~\ref{prop:dngCompat}.
\begin{prop}
\label{prop:genCompat}Assume $X_{I},X_{J}\in\mathcal{Y}$ and $P\in X_{I}\neq X_{J}$
. Then $\opt(P)\cap X_{J}\not=\emptyset$ if and only if $\opt(I)\cap X_{J}\not=\emptyset$. 
\end{prop}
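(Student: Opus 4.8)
The plan is to reduce as much as possible to the avoidance game, where Proposition~\ref{prop:dngCompat} already does the work, and to isolate the one genuinely new phenomenon: options that land in the extra class $X_G$. I would split into three cases according to which (if any) of $X_I,X_J$ equals $X_G$.

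The case $X_I=X_G$ is degenerate: every element of $X_I$ is a generating set, hence a terminal position, so $\opt(P)=\emptyset$; interpreting $I$ as any fixed element of $X_G$ (recall $X_G$ need not contain $G$, so a convention is needed here), that element is terminal as well, so $\opt(I)=\emptyset$ and both sides of the equivalence are false. In the case $X_I,X_J\in\mathcal X$, I would note that for $J\in\mathcal I$ the subgroup $J$ is proper, so every member of $X_J$ is a non-generating set; hence an option $P\cup\{g\}$ of $P$ lying in $X_J$ is also a legal position, and thus an option, in $\dng(G)$, and the same holds with $I$ in place of $P$. Consequently $\opt(P)\cap X_J$ and $\opt(I)\cap X_J$ are the same whether computed in $\gen(G)$ or in $\dng(G)$, and the assertion in this case is precisely Proposition~\ref{prop:dngCompat}.

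The substantive case is $X_I\in\mathcal X$ and $X_J=X_G$. The crux is the identity
\[
I=\bigcap\{M\in\mathcal M\mid P\subseteq M\}
\]
for any $P\in X_I$: the right-hand side is an intersection of a nonempty family of maximal subgroups (nonempty because $P$ is non-generating), hence lies in $\mathcal I$ and contains $P$; it is contained in $I$ because $I$ is itself an intersection of maximal subgroups each containing $P$; and it cannot lie in $(-\infty,I)$ without contradicting $P\in X_I=\mathcal P(I)\setminus\bigcup_{J\in(-\infty,I)}\mathcal P(J)$. From this the sets $\mathcal M_P:=\{M\in\mathcal M\mid P\subseteq M\}$ and $\mathcal M_I:=\{M\in\mathcal M\mid I\subseteq M\}$ coincide: $\mathcal M_I\subseteq\mathcal M_P$ since $P\subseteq I$, and conversely any $M\supseteq P$ contains $\bigcap\mathcal M_P=I$. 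Now, using Proposition~\ref{prop:GeneratingSubset}: for $g\notin P$ the set $P\cup\{g\}$ lies in $X_G$ iff it generates $G$ (it is automatically a minimal generating set, since deleting $g$ leaves the non-generating set $P$), and $P\cup\{g\}$ generates $G$ iff $g\notin\bigcup\mathcal M_P$; such a $g$ exists iff $\bigcup\mathcal M_P\ne G$ (and then $g\notin P$ is automatic). The identical description applies to $I$ in place of $P$, so $\opt(P)\cap X_G\ne\emptyset$ iff $\bigcup\mathcal M_P\ne G$ iff $\bigcup\mathcal M_I\ne G$ iff $\opt(I)\cap X_G\ne\emptyset$.

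The main obstacle I anticipate is establishing the identity $I=\bigcap\{M\in\mathcal M\mid P\subseteq M\}$ and the ensuing equality $\mathcal M_P=\mathcal M_I$; once these are available, everything else is the routine bookkeeping indicated above. A secondary nuisance is making the statement meaningful when $X_I=X_G$, which I would handle by declaring $I$ to be an arbitrary (terminal) element of $X_G$, this being harmless precisely because every element of $X_G$ is terminal.
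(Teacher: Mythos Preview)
Your argument is correct. The only genuine difference from the paper's proof is in the substantive case $X_I\in\mathcal X$, $X_J=X_G$. There the paper argues directly: one direction is the trivial inclusion $\langle P\cup\{g\}\rangle\subseteq\langle I\cup\{g\}\rangle$, and for the other it assumes $\langle I\cup\{g\}\rangle=G$ but $\langle P\cup\{g\}\rangle\ne G$, picks a maximal $M\supseteq P\cup\{g\}$, and observes that $P\subseteq M\cap I\in(-\infty,I)$, contradicting $P\in X_I$. You instead extract the structural lemma $I=\bigcap\{M\in\mathcal M:P\subseteq M\}$ (equivalently $\mathcal M_P=\mathcal M_I$) and then read off the result symmetrically via $\opt(P)\cap X_G\ne\emptyset\iff\bigcup\mathcal M_P\ne G$. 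The two arguments have the same content---the paper's contradiction step is exactly what forces $\mathcal M_P\subseteq\mathcal M_I$---but your formulation isolates a reusable fact and makes the equivalence visibly symmetric, at the cost of a few extra lines. Your explicit treatment of the degenerate case $X_I=X_G$ is also more careful than the paper, which silently restricts to $I\in\mathcal I$.
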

\begin{proof}
It suffices to consider the case when $J=G$ since the proofs of the
other cases are the same as that of Proposition~\ref{prop:dngCompat}.
We show that $\opt(P)\cap X_{G}\not=\emptyset$ for $P\in X_{I}$
if and only if $\opt(I)\cap X_{G}\not=\emptyset$. Note that $\opt(P)\cap X_{G}\not=\emptyset$
when there exists a $g\in G\setminus I$ such that $P\cup\{g\}$ generates
$G$.

First, assume that $\opt(I)\cap X_{G}=\emptyset$. Then $\langle P\cup\{g\}\rangle\subseteq\langle I\cup\{g\}\rangle\not=G$
for all $g\in G\setminus I$ and so $\opt(P)\cap X_{G}=\emptyset$. 

Now, assume that $\opt(I)\cap X_{G}\not=\emptyset$. Then $I\cup\{g\}\in X_{G}$
for some $g\in G\setminus I$, so that $\langle I\cup\{g\}\rangle=G$.
For a contradiction, assume that $\opt(P)\cap X_{G}=\emptyset$. Then
$H:=\langle P\cup\{g\}\rangle$ is a proper subgroup of $G$, and
so $H$ is contained is some maximal subgroup $M$. This implies that
$I$ is not a subset of $M$ since $g\in M$ and $I\cup\{g\}$ generates
$G$. Hence $P\subseteq M\cap I\in(-\infty,I)$, which contradicts
$P\in X_{I}$.
\end{proof}
\begin{cor}
\label{cor:genCompat}Assume $X_{I},X_{J}\in\mathcal{Y}$ and $P,Q\in X_{I}\ne X_{J}$.
Then $\opt(P)\cap X_{J}\not=\emptyset$ if and only if $\opt(Q)\cap X_{J}\not=\emptyset$.
\end{cor}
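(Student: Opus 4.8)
The plan is to deduce Corollary~\ref{cor:genCompat} directly from Proposition~\ref{prop:genCompat} by the same elementary argument used to pass from Proposition~\ref{prop:dngCompat} to Corollary~\ref{cor:dngCompat}. The point of Proposition~\ref{prop:genCompat} is that, for $X_{I}\neq X_{J}$ in $\mathcal{Y}$, the condition $\opt(P)\cap X_{J}\neq\emptyset$ for a position $P\in X_{I}$ does not actually depend on the choice of $P$: it holds if and only if the ``reference'' condition $\opt(I)\cap X_{J}\neq\emptyset$ holds, where $I$ is the largest element of $X_{I}$. (When $X_{I}=X_{G}$, the role of $I$ is played by any fixed reference position in $X_{G}$; the statement of Proposition~\ref{prop:genCompat} is phrased so this causes no trouble, since the only case needing new proof is $J=G$, and there $I$ is an honest subgroup.)

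The key steps are: first, apply Proposition~\ref{prop:genCompat} with the position $P$ to obtain $\opt(P)\cap X_{J}\neq\emptyset \iff \opt(I)\cap X_{J}\neq\emptyset$; second, apply Proposition~\ref{prop:genCompat} again with the position $Q$ to obtain $\opt(Q)\cap X_{J}\neq\emptyset \iff \opt(I)\cap X_{J}\neq\emptyset$; third, chain the two biconditionals through the common middle term $\opt(I)\cap X_{J}\neq\emptyset$ to conclude $\opt(P)\cap X_{J}\neq\emptyset \iff \opt(Q)\cap X_{J}\neq\emptyset$. Both $P$ and $Q$ lie in $X_{I}$ and $X_{I}\neq X_{J}$, so the hypotheses of Proposition~\ref{prop:genCompat} are met in each application. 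That is the entire argument.

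There is essentially no obstacle here — this corollary is a formal consequence of its predecessor, exactly mirroring Corollary~\ref{cor:dngCompat}. The only thing to be mildly careful about is the case $X_{I}=X_{G}$: since $X_{G}$ has no canonical ``largest element,'' one should note that Proposition~\ref{prop:genCompat} was in fact established for all pairs $X_{I}\neq X_{J}$ in $\mathcal{Y}$ (the new content being $J=G$, and the case $I=G$ being covered either by that proposition's stated generality or by observing that when $X_{I}=X_{G}$ both $P$ and $Q$ are terminal so $\opt(P)=\opt(Q)=\emptyset$ and the claim is trivial). With that remark in place, the transitivity chain goes through verbatim, and the purpose of the corollary — licensing the definition of $X_{J}\in\opt(X_{I})$ for the structure digraph of $\gen(G)$ — is served.
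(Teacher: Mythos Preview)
Your proposal is correct and matches the paper's approach: the corollary is stated immediately after Proposition~\ref{prop:genCompat} with no separate proof, since it follows by applying that proposition once with $P$ and once with $Q$ and chaining through the common condition $\opt(I)\cap X_{J}\neq\emptyset$, exactly as Corollary~\ref{cor:dngCompat} follows from Proposition~\ref{prop:dngCompat}. Your extra remark handling the degenerate case $X_{I}=X_{G}$ (where $P$ and $Q$ are terminal and the claim is vacuous) is a reasonable bit of care that the paper leaves implicit.
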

As with $\dng(G)$, it is convenient to append the nim-number data
for each structure class to the structure digraph for $\gen(G)$,
which we visualize in a structure diagram. The following proposition
guarantees that we may define the simplified structure diagram of
$\gen(G)$ analogously to that of $\dng(G)$. 
\begin{prop}
If $P,Q\in X_{I}\in\mathcal{Y}$ and $\pty(P)=\pty(Q)$, then $\nim(P)=\nim(Q)$.
\end{prop}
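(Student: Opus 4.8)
The plan is to mimic the proof of Proposition~\ref{prop:folding} almost verbatim, handling only the new possibility that the common structure class is $X_G$ or that some option lies in $X_G$. As before, I would argue by structural induction on the positions of $\gen(G)$, with the same inductive hypothesis: if $\pty(M)=\pty(N)$ and either $M,N$ lie in $X_I$ with $|M|,|N|>\min\{|P|,|Q|\}$, or $M,N$ lie in a common option class of $X_I$, then $\nim(M)=\nim(N)$. Without loss of generality assume $|P|\le|Q|$.

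First dispose of the case $X_I=X_G$. Every element of $X_G$ is a minimal generating set, hence a terminal position of $\gen(G)$ with nim-number $0$, so $\nim(P)=0=\nim(Q)$ trivially and the parity hypothesis is not even needed. Now assume $X_I\in\mathcal{X}$, so that $I$ is an actual intersection subgroup and $P,Q\in X_I$. The options of $P$ are either in $X_I$ itself, in some $X_J\in\opt(X_I)$ with $X_J\in\mathcal{X}$, or in $X_G$ (when $X_G\in\opt(X_I)$). The first two types are handled exactly as in Proposition~\ref{prop:folding}: for an option $T\in\opt(P)\cap X_I$ we pick $\widetilde T\in\opt(Q)\cap X_I$ (possible whenever $Q\ne I$, using that $I$ is the unique element of $X_I$ with no option inside $X_I$), and for $S\in\opt(P)\cap X_J$ we invoke Corollary~\ref{cor:genCompat} to get $\widetilde S\in\opt(Q)\cap X_J$; in both cases equal parity forces equal nim-number by induction. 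For an option $S\in\opt(P)\cap X_G$, Corollary~\ref{cor:genCompat} gives some $\widetilde S\in\opt(Q)\cap X_G$, and since every element of $X_G$ has nim-number $0$, we get $\nim(S)=0=\nim(\widetilde S)$ directly, with no parity condition required. Running this in both directions when $Q\ne I$ yields $\nopt(P)=\nopt(Q)$, hence $\nim(P)=\nim(Q)$.

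The remaining case is $P\ne Q=I$, which forces $|P|<|Q|$. This is the delicate case, just as in Proposition~\ref{prop:folding}, because $I$ has no options inside $X_I$ while $P$ may. I would show $\nopt(I)\subseteq\nopt(P)$ by taking each $\widetilde S\in\opt(I)$ — which lies in some $X_J\in\opt(X_I)$ (with $X_J$ possibly $X_G$) — and using Corollary~\ref{cor:genCompat} to produce $S\in\opt(P)\cap X_J$ of the same parity, so that $\nim(S)=\nim(\widetilde S)$ by induction (or directly, if $X_J=X_G$). Conversely, as in the original proof, every option of $P$ outside $X_I$ matches an option of $I$ of the same parity, while options of $P$ inside $X_I$ all share the nim-number $\nim(T')$ of a fixed $T'\in X_I$ with $I\in\opt(T')$; hence $\nopt(P)=\nopt(I)\cup\{\nim(T')\}$, with $\nim(T')\in\nopt(P)$ since $\opt(P)\cap X_I\ne\emptyset$. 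Since $\mex(\nopt(I))=\nim(I)\in\nopt(T')$, Lemma~\ref{lem:mex} gives $\mex(\nopt(P))=\mex(\nopt(I))$, i.e.\ $\nim(P)=\nim(I)=\nim(Q)$.

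The main obstacle is essentially bookkeeping rather than a new idea: one must make sure that Corollary~\ref{cor:genCompat} (not merely Corollary~\ref{cor:dngCompat}) is what licenses matching options when $X_G$ is involved, and that the fixed vertex $T'$ with $I\in\opt(T')$ still exists — it does, since $I\ne\emptyset$ forces $X_I$ to contain a proper subset of $I$, exactly as before. The genuinely new input, that every position in $X_G$ is terminal and so carries nim-number $0$, makes the $X_G$ options strictly easier to handle than ordinary options, so no real complication arises from the enlarged partition $\mathcal{Y}$.
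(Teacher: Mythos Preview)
Your proposal is correct and takes essentially the same approach as the paper, which simply states that the proof is analogous to that of Proposition~\ref{prop:folding} using Corollary~\ref{cor:genCompat} in place of Corollary~\ref{cor:dngCompat}. One minor terminological point: elements of $X_G$ are not \emph{minimal} generating sets in the usual sense (the paper only requires that removing \emph{some} element destroys generation), but this does not affect your argument since what matters is that they are terminal positions with nim-number $0$.
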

\begin{proof}
The proof is analogous to that of Proposition~\ref{prop:folding}
using Corollary~\ref{cor:genCompat}.
\end{proof}
\begin{figure}
\includegraphics{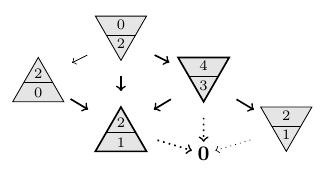}\caption{\label{fig:49gen}Simplified structure diagram for $\protect\gen(\mathbb{Z}_{6}\times\mathbb{Z}_{3})$.}
\end{figure}

Given a structure class $X_{I}$, we define $\type(X_{I})$, $\otype(X_{I})$,
and $\Otype(X_{I})$ as before. Note that by definition the type of
the terminal structure class $X_{G}$ is $\type(X_{G})=(\pty(G),0,0)$.
As in the avoidance games, we define $X_{I}$ and $X_{J}$ to be \emph{type
equivalent} if $\type(X_{I})=\type(X_{J})$ and $\Otype(X_{I})=\Otype(X_{J})$.
The simplified structure digraph of $\gen(G)$ is the identification
digraph of the structure digraph with respect to type equivalence.
The simplified structure diagram is visualized as expected, but the
structure class $X_{G}$ will be denoted by $0$, regardless of the
parity of $X_{G}$.
\begin{example}
Figure~\ref{fig:49gen} shows a simplified structure diagram for
$\gen(\mathbb{Z}_{6}\times\mathbb{Z}_{3})=*0$. Note that in the simplified
structure diagram of $\dng(\mathbb{Z}_{6}\times\mathbb{Z}_{3})$,
shown in Figure~\ref{fig:49dng}(d), all the classes with type $(0,0,1)$
were identified. This is not the case in the simplified structure
digraph of $\gen(\mathbb{Z}_{6}\times\mathbb{Z}_{3})$ since some
of these classes are connected to terminal positions while others
are not. The use of the dotted arrows is to emphasize that these arrows
are not present in $\dng(\mathbb{Z}_{6}\times\mathbb{Z}_{3})$.
\end{example}
\begin{defn}
We call a structure class $X_{I}$ \emph{semi-terminal} if the terminal
class $X_{G}$ is an option of $X_{I}$. We call $X_{I}$ \emph{non-terminal}
if $X_{I}$ is neither terminal nor semi-terminal.
\end{defn}
Note that if $I$ is a maximal subgroup, then $X_{I}$ is semi-terminal.
However, $X_{I}$ may be semi-terminal even if $I$ is not a maximal
subgroup. For example, $X_{\langle(0,1)\rangle}$ is semi-terminal
in $\gen(\mathbb{Z}_{6}\times\mathbb{Z}_{3})$ but $\langle(0,1)\rangle\cong\mathbb{Z}_{3}$
is not a maximal subgroup of $\mathbb{Z}_{6}\times\mathbb{Z}_{3}$,
as shown in Figures~\ref{fig:49dng}(a) and \ref{fig:49gen}. Also
note that a semi-terminal structure class cannot have a non-terminal
option since if $X_{J}$ is an option of $X_{I}$, then $I\le J$. 
\begin{prop}
\label{prop:genTypes}If $G$ is a nontrivial group with $|G|$ odd,
then the type of a structure class $X_{I}$ of $\gen(G)$ is in
\[
T=\{t_{0}:=(1,0,0),t_{1}:=(1,1,0),t_{2}:=(1,2,0),t_{3}:=(1,2,1)\}.
\]
\end{prop}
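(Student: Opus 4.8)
The plan is to mimic the structure of the proof of the analogous statement for avoidance games (the proposition preceding Corollary~\ref{cor:nim-value-dng-013}), namely to argue by structural induction on the structure classes, using the compatibility guaranteed by Corollary~\ref{cor:genCompat}. First I would record the base case: since $|G|$ is odd, every intersection subgroup $I$ has odd order, so $\pty(X_I)=1$ for all $I$, and likewise $\pty(G)=1$. In particular $\type(X_G)=(1,0,0)=t_0$, and any terminal structure class $X_I\in\mathcal X$ has $\type(X_I)=(1,1,0)=t_1$. So all terminal classes have type in $T$, and the first coordinate of every type appearing will be $1$; this lets me drop the parity bookkeeping that complicated the $\dng$ argument and focus entirely on the nim-number coordinates.

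The inductive step is: assuming $\otype(X_I)\subseteq T$, show $\type(X_I)\in T$. Here I would split on whether $X_I$ is terminal, semi-terminal, or non-terminal. The terminal case is the base case above. For the remaining cases, write $\type(X_I)=(1,c,d)$ where $d=\nim(I)=\mex(\nopt(I))$ and $c=\mex(\nopt(P)\cup\{d\})$ for an even-sized $P\in X_I$ (using that $I$ has odd size and that an even position $P\ne I$ has $I$-options realizing nim-value $d$, exactly as in the $Q=I$ case of Proposition~\ref{prop:folding}, together with Lemma~\ref{lem:mex}). Now $\nopt(I)$ is a subset of the set of second/third coordinates of the types in $\otype(X_I)$: an odd option lands in the trapezoid part of its class and an even option in the small-triangle part — wait, I must be careful here, since $I$ is odd, all options of $I$ have even size, so each option of $I$ contributes the \emph{second} coordinate (the even-position nim-number) of the type of the class it lands in; together with the possible option $X_G$ of type $t_0$ contributing $0$. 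So $\nopt(I)\subseteq\{0\}\cup\{\,\text{second coords of types in }\otype(X_I)\}\subseteq\{0,1,2\}$ since the second coordinates of $t_0,t_1,t_2,t_3$ are $0,1,2,2$. Hence $d=\mex(\nopt(I))\in\{0,1,2,3\}$, and then $c=\mex(\nopt(I)\cup\{d\})$; since $d\notin\nopt(I)$ and $2\in\{0,1,2,3\}$ is the only other possibly-relevant small value, a short case check on the possible subsets of $\{0,1,2\}$ that can occur as $\nopt(I)$ shows $(1,c,d)$ is always one of $t_0,t_1,t_2,t_3$. I would present this as a small table of $\nopt(I)$ versus $(1,c,d)$, exactly paralleling the $\dng$ table, and note that the pairs $(c,d)\in\{(0,0),(1,0),(2,0),(2,1)\}$ are precisely the four allowed ones.

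The main obstacle, and the place to be most careful, is pinning down exactly which values can appear in $\nopt(I)$ — i.e. checking that $d$ cannot exceed $2$ and that no forbidden pair $(c,d)$ arises. This requires the observation that every option of the odd-sized set $I$ is even-sized and therefore reads the second coordinate of its target type, the fact that the second coordinates available in $T$ are only $0,1,2$, and the fact that $X_G$ (if present as an option) contributes $0$. The other delicate point is justifying the formula $c=\mex(\nopt(I)\cup\{\nim(T')\})$ with $\nim(T')=d$, which is precisely the content of the $Q=I$ branch of Proposition~\ref{prop:folding} applied in the $\gen$ setting via Corollary~\ref{cor:genCompat}; I would cite that rather than redo it. Once the table is in place, structural induction closes the argument.
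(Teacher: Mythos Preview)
Your outline has a genuine gap, and it is precisely the place you flagged as ``the main obstacle.'' The naive enumeration over all nonempty subsets of $T$ as possible values of $\otype(X_I)$ does \emph{not} close: for instance, if $\otype(X_I)=\{t_0,t_1\}$ then the second coordinates present are $\{0,1\}$, so $d=\mex\{0,1\}=2$, while the third coordinates present are $\{0\}$, so $c=\mex(\{d\}\cup\{0\})=\mex\{2,0\}=1$, giving $\type(X_I)=(1,1,2)\notin T$. Several other subsets containing both $t_0$ and one of $t_1,t_2$ fail similarly. So structural induction on ``$\otype(X_I)\subseteq T$ implies $\type(X_I)\in T$'' is simply false without further input.

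What is missing is a structural fact you never state: if $X_I$ is semi-terminal (i.e.\ $X_G\in\opt(X_I)$), then \emph{every} option of $X_I$ is terminal or semi-terminal. (Reason: if $I\cup\{g\}$ generates $G$ and $X_J\in\opt(X_I)$ with $J\in\mathcal I$, then $I\subset J$ forces $g\notin J$, whence $J\cup\{g\}$ also generates $G$.) This is exactly what the paper uses: it shows inductively that semi-terminal classes have type $t_3$, so for semi-terminal $X_I$ one has $\otype(X_I)\subseteq\{t_0,t_3\}$; and for non-terminal $X_I$ one has $t_0\notin\otype(X_I)$, so $\otype(X_I)\subseteq\{t_1,t_2,t_3\}$. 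With this dichotomy in hand the bad subsets above are excluded and the remaining table goes through. A secondary issue: your formula $c=\mex(\nopt(I)\cup\{d\})$ is not right---$\nopt(I)$ records second coordinates (options of the odd set $I$ are even), whereas $c$ is the mex of $d$ together with the \emph{third} coordinates of the option types (options of an even $P$ are odd). The $Q=I$ branch of Proposition~\ref{prop:folding} you cite concerns odd $P$, not even $P$, so it does not yield your formula.
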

\begin{proof}
First, it is clear that $\type(X_{I})=t_{0}$ if and only if $X_{I}$
is terminal. Now, we use structural induction to argue that $\type(X_{I})=t_{3}$
if $X_{I}$ is semi-terminal and $\type(X_{I})\in\{t_{1},t_{2}\}$
if $X_{I}$ is non-terminal. 

If $X_{I}$ is semi-terminal, then every option of $X_{I}$ is either
terminal or semi-terminal, and so $\otype(X_{I})=\{t_{0}\}$ or $\otype(X_{I})=\{t_{0},t_{3}\}$
by induction. In both cases the type of $X_{I}$ must be $t_{3}$. 

If $X_{I}$ is non-terminal, then no option of $X_{I}$ has type $t_{0}$
and so $\otype(X_{I})\subseteq\{t_{1},t_{2},t_{3}\}$. In each case,
$\type(X_{I})\in\{t_{1},t_{2}\}$. The following table shows the possibilities
for $\otype(X_{I})$ and $\type(X_{I})$:

\smallskip\centerline{%
\begin{tabular}{|c|c|c|c|c|c|c|c|}
\cline{1-2} \cline{4-5} \cline{7-8} 
$\otype(X_{I})$ & $\type(X_{I})$ &  & $\otype(X_{I})$ & $\type(X_{I})$ &  & $\otype(X_{I})$ & $\type(X_{I})$\tabularnewline
\cline{1-2} \cline{4-5} \cline{7-8} 
$\{t_{1}\}$ & $t_{1}$ &  & $\{t_{1},t_{2}\}$ & $t_{1}$ &  & $\{t_{1},t_{2},t_{3}\}$ & $t_{2}$\tabularnewline
\cline{1-2} \cline{4-5} \cline{7-8} 
$\{t_{2}\}$ & $t_{1}$ &  & $\{t_{1},t_{3}\}$ & $t_{2}$ & \multicolumn{1}{c}{} & \multicolumn{1}{c}{} & \multicolumn{1}{c}{}\tabularnewline
\cline{1-2} \cline{4-5} 
$\{t_{3}\}$ & $t_{2}$ &  & $\{t_{2},t_{3}\}$ & $t_{2}$ & \multicolumn{1}{c}{} & \multicolumn{1}{c}{} & \multicolumn{1}{c}{}\tabularnewline
\cline{1-2} \cline{4-5} 
\end{tabular}$\quad$%
\begin{tabular}{|c|c|c|}
\hline 
\includegraphics[scale=0.7]{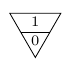} & \includegraphics[scale=0.7]{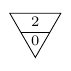} & \includegraphics[scale=0.7]{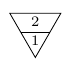}\tabularnewline
$t_{1}$ & $t_{2}$ & $t_{3}$\tabularnewline
\hline 
\multicolumn{1}{c}{} & \multicolumn{1}{c}{} & \multicolumn{1}{c}{}\tabularnewline
\end{tabular}}\smallskip
\end{proof}
\begin{cor}
\label{cor:nimOddGEN12}If $G$ is nontrivial with $|G|$ odd, then
the nim-number of $\gen(G)$ is $1$ or $2$.
\end{cor}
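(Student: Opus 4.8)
The plan is to derive Corollary~\ref{cor:nimOddGEN12} directly from Proposition~\ref{prop:genTypes}. Recall that the nim-number of $\gen(G)$ is, by definition, the nim-number of its starting position $\emptyset$, and $\emptyset \in X_{\Phi(G)}$, the structure class of the Frattini subgroup. Since $\emptyset$ has even parity, the nim-number we want is the third coordinate of $\type(X_{\Phi(G)})$.

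First I would observe that when $|G|$ is odd and $G$ is non-trivial, $\Phi(G)$ is a proper subgroup of $G$ (it is contained in every maximal subgroup, and maximal subgroups are proper), so $X_{\Phi(G)}$ is not the terminal class $X_G$; hence $\type(X_{\Phi(G)}) \neq t_0$. By Proposition~\ref{prop:genTypes}, $\type(X_{\Phi(G)})$ must therefore be one of $t_1 = (1,1,0)$, $t_2 = (1,2,0)$, or $t_3 = (1,2,1)$. In each of these three types the third coordinate is either $0$, $0$, or $1$ respectively---wait, that gives $\nim(\emptyset) \in \{0,1\}$, not $\{1,2\}$. So instead the relevant coordinate is the \emph{second} one: since $\emptyset$ is a position of odd parity? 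No---$|\emptyset| = 0$ is even, so $\pty(\emptyset) = 0$. Let me reconsider: in the triple $\type(X_I) = (\pty(I), \nim(P), \nim(Q))$ with $\pty(P) = 0$ and $\pty(Q) = 1$, the middle coordinate is the nim-number of the even positions. Thus $\nim(\emptyset)$ is the \emph{middle} coordinate of $\type(X_{\Phi(G)})$, which for $t_1, t_2, t_3$ is $1$, $2$, $2$ respectively. Hence $\nim(\gen(G)) \in \{1, 2\}$, as claimed.

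So the proof I would write is essentially: the nim-number of $\gen(G)$ equals the second digit of $\type(X_{\Phi(G)})$; since $G$ is non-trivial of odd order, $\Phi(G) \neq G$, so $X_{\Phi(G)}$ is non-terminal and $\type(X_{\Phi(G)}) \in \{t_1, t_2, t_3\}$ by Proposition~\ref{prop:genTypes}; inspection of these three types shows the second digit is $1$ or $2$.

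The main (minor) obstacle is just being careful about which coordinate of the type triple records the nim-number of the starting position---it is the middle entry because $\emptyset$ has even size---and justifying that $X_{\Phi(G)}$ is genuinely non-terminal, which follows because a non-trivial group has a proper maximal subgroup containing $\Phi(G)$, so $\Phi(G) \subsetneq G$ and $\Phi(G)$ generates a proper subgroup. This mirrors exactly the proof of Corollary~\ref{cor:nim-value-dng-013}, so the argument is short.
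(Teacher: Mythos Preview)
Your reasoning is correct and matches the paper's intended argument exactly: the nim-number of $\gen(G)$ is the second (middle) coordinate of $\type(X_{\Phi(G)})$, and since $\Phi(G)\neq G$ for non-trivial $G$, Proposition~\ref{prop:genTypes} forces that coordinate to be $1$ or $2$. The paper's own proof of the analogous Corollary~\ref{cor:nim-value-dng-013} is the one-line version of what you wrote, so once you strip out the exploratory back-and-forth your argument is identical.
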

The computer experiments \cite{WEB} of the next section hint at the
following.
\begin{conjecture}
\label{conj:nimEvenGEN01234}If $|G|$ is even, then the nim-number
of $\gen(G)$ is in $\{0,1,2,3,4\}$.
\end{conjecture}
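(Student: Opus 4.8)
The plan is to imitate the proof of Proposition~\ref{prop:genTypes}: set up a finite \emph{type calculus} for $\gen(G)$ and run a structural induction over the structure classes. Since $\emptyset\in X_{\Phi(G)}$ is a set of even size, $\nim(\gen(G))$ is the second coordinate of $\type(X_{\Phi(G)})$, so it is enough to exhibit a finite set $T$ of triples $(p,n_{0},n_{1})$, each with $n_{0}\le 4$, such that $T$ contains the terminal type $\type(X_{G})=(0,0,0)$ (here $|G|$ is even, so $\pty(G)=0$) and $T$ is \emph{closed} under type propagation: whenever $X_{I}\ne X_{G}$ has all of its options of type in $T$, then $\type(X_{I})\in T$. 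Given such a $T$, structural induction gives $\type(X_{I})\in T$ for every structure class, and in particular $n_{0}\le 4$ for $X_{\Phi(G)}$, which is the claim.

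Carrying this out breaks into three steps. First, record how $\type(X_{I})$ is determined by $\pty(X_{I})\in\{0,1\}$ and $\otype(X_{I})$. By Corollary~\ref{cor:genCompat} all positions in $X_{I}$ reach the same collection of option classes, and every non-maximal $P\in X_{I}$ also has an option inside $X_{I}$ of the opposite size parity; so if $S_{\epsilon}$ denotes the multiset of $\epsilon$-coordinates of the types of the option classes of $X_{I}$ and $n_{0},n_{1}$ are the coordinates of $\type(X_{I})$ itself, then a size-parity-$\epsilon$ position of $X_{I}$ has nim-number $\mex(S_{1-\epsilon})$ when it equals the maximal element $I$ and $\mex(S_{1-\epsilon}\cup\{n_{1-\epsilon}\})$ otherwise; consistency with Proposition~\ref{prop:folding}, via Lemma~\ref{lem:mex}, forces these to agree, so $\type(X_{I})$ is a deterministic function of $(\pty(X_{I}),\otype(X_{I}))$, exactly as in the tables of Proposition~\ref{prop:genTypes} and of the $\dng$ classification behind Corollary~\ref{cor:nim-value-dng-013}. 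Second, use the computer program of Section~\ref{sec:Algorithms} to compute $T$ across a large range of even-order groups; the conjecture predicts that this stabilizes to a short explicit list in which all five values $0,\dots,4$ occur as second coordinates. Third, for every pair $(p,\mathcal{S})$ with $p\in\{0,1\}$ and $\mathcal{S}\subseteq T$ that can occur as $(\pty(X_{I}),\otype(X_{I}))$ --- subject to the parity constraint that an odd-parity option forces $p=1$, and to the restriction that $\type(X_{G})=(0,0,0)$ lies in $\otype(X_{I})$ exactly when $X_{I}$ is semi-terminal, in which case every other option type is itself semi-terminal --- compute the resulting $\type(X_{I})$ and verify that it lies in $T$, tabulating as in Proposition~\ref{prop:genTypes}.

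The genuine obstacle is the third step, and it is the reason this is only a conjecture. The construction guarantees nothing stronger than $\nim(I)=\mex(S)$ for a multiset $S$ whose members, inductively, all lie in $\{0,1,2,3,4\}$ --- and the minimum excludant of such a multiset can be $5$. To keep the bound at $4$ one must show that the multiset of option nim-numbers feeding into any structure class never realizes all of $\{0,1,2,3,4\}$ at once; equivalently, that the experimentally obtained $T$ really is closed. This is not a formal consequence of the framework: it encodes genuine constraints on which combinations of option types can coexist below a single intersection subgroup of an even-order group --- constraints that for odd order collapse to the short table of Proposition~\ref{prop:genTypes}, but which for even order must also control the terminal class $X_{G}$ appearing as an option of semi-terminal classes of both parities (which need not be maximal, witness $\langle 4\rangle$ in $\gen(\mathbb{Z}_{8})$) as well as longer option chains. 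Turning the conjecture into a theorem therefore amounts either to (a) verifying by exhaustive case analysis that the conjectured finite $T$ is closed --- contingent on being certain the search has not missed a sporadic larger type that appears only for some large group --- or to (b) isolating an a priori structural property of the intersection-subgroup lattices of even-order groups that rules out the option-type combinations which would push a nim-number to $5$. I expect (b) to be the crux; absent such structural input there is no evident reason the second coordinate cannot eventually reach $5$.
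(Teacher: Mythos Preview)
The statement is a \emph{conjecture} in the paper; the paper offers no proof. Your proposal is likewise not a proof, and you say so explicitly. What you have written is a correct diagnosis of why the method behind Proposition~\ref{prop:genTypes} does not immediately extend: one would need a finite type set $T$ containing $(0,0,0)$ and closed under the propagation rule, but since $\mex$ applied to a subset of $\{0,1,2,3,4\}$ can return $5$, closure is not automatic and depends on showing that certain combinations of option types cannot occur beneath a single intersection subgroup. This is precisely the paper's own assessment in the sentence following the conjecture: the techniques of Proposition~\ref{prop:genTypes} do not seem sufficient, and a proof probably requires a more careful analysis of the forbidden configurations in a structure diagram. Your options (a) and (b) are reasonable framings of what such an analysis would have to accomplish, but neither is carried out, so the conjecture remains open in your write-up just as it does in the paper.
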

The techniques used to prove Proposition~\ref{prop:genTypes} do
not seem to be sufficient to settle this conjecture. A proof probably
requires a more careful analysis of the forbidden configurations in
a structure diagram. 

\section{Algorithms\label{sec:Algorithms}}

We developed a software package that computes the simplified structure
digraph of $\dng(G)$ and $\gen(G)$. We used GAP \cite{GAP} to get
the maximal subgroups and the rest of the computation is implemented
in C++. The software is efficient enough to allow us to compute the
nim-numbers for the smallest $100,000$ groups which includes all
groups up to size $511$. The result is available on our companion
web page~\cite{WEB}. The algorithms used are shown in Figures~\ref{fig:AlgorithmDNG}
and \ref{fig:AlgorithmGEN}. Both are based on the results of this
section. 
\begin{figure}
\begin{centering}
\begin{tabular}{|c|}
\hline 
\begin{tabular}{l|l|l}
\multicolumn{3}{l}{get the set $\mathcal{M}$ of maximal subgroups of $G$ from GAP //
\emph{terminal positions}}\tabularnewline
\multicolumn{3}{l}{compute the set $\mathcal{I}$ of intersection subgroups}\tabularnewline
\multicolumn{3}{l}{\textbf{for} $I,J\in\mathcal{I}$ // \emph{find arrows}}\tabularnewline
 & \multicolumn{2}{l}{\textbf{if} $I\subseteq J$ and $J\setminus\bigcup\{K\in\mathcal{I}\mid I\subseteq K\text{ and }J\not\subseteq K\}\not=\emptyset$ }\tabularnewline
 &  & add arrow $(X_{I},X_{J})$ to the structure digraph\tabularnewline
\multicolumn{3}{l}{\textbf{for} $I\in\mathcal{I}$ with $\type(X_{I})$ undetermined
// \emph{compute types}}\tabularnewline
 & \multicolumn{2}{l}{\textbf{if }$\type(X_{J})$ is determined for all $X_{J}\in\opt(X_{I})$}\tabularnewline
 &  & compute $\otype(X_{I})$\tabularnewline
 &  & $\type(X_{I}):=(\pty(I),\mex\{c\mid(a,b,c)\in\otype(X_{I})\},\mex\{b\mid(a,b,c)\in\otype(X_{I})\})$\tabularnewline
\multicolumn{3}{l}{\textbf{for} $I,J\in\mathcal{I}$ // \emph{compute the structure classes}}\tabularnewline
 & \multicolumn{2}{l}{\textbf{if} $\type(X_{I})=\type(X_{J})$ and $\Otype(X_{I})=\Otype(X_{J})$ }\tabularnewline
 &  & identify $X_{I}$ and $X_{J}$\tabularnewline
\end{tabular}\tabularnewline
\hline 
\end{tabular}
\par\end{centering}
\caption{\label{fig:AlgorithmDNG}Algorithm to compute the simplified structure
digraph of $\protect\dng(G)$.}
\end{figure}
\begin{figure}
\begin{centering}
\begin{tabular}{|c|}
\hline 
\begin{tabular}{l|l|l}
\multicolumn{3}{l}{get the set $\mathcal{M}$ of maximal subgroups of $G$ from GAP}\tabularnewline
\multicolumn{3}{l}{compute the set $\mathcal{I}$ of intersection subgroups}\tabularnewline
\multicolumn{3}{l}{\textbf{for} $I,J\in\mathcal{I}$ // \emph{find arrows}}\tabularnewline
 & \multicolumn{2}{l}{\textbf{if} $I\subseteq J$ and $J\setminus\cup\{K\in\mathcal{I}\mid I\subseteq K\text{ and }J\not\subseteq K\}\not=\emptyset$ }\tabularnewline
 &  & add arrow $(X_{I},X_{J})$ to the structure digraph\tabularnewline
\multicolumn{3}{l}{\textbf{for} $I\in\mathcal{I}$}\tabularnewline
 & \multicolumn{2}{l}{\textbf{if} $(\exists g\in G)(\forall M\in\mathcal{M})$ $I\cup\{g\}\not\subseteq M$
// $X_{I}$ \emph{is semi-terminal}}\tabularnewline
 &  & add arrow $(X_{I},X_{G})$ to the structure digraph\tabularnewline
\multicolumn{3}{l}{$\type(X_{G}):=(\pty(G),0,0)$ // \emph{the only terminal class }}\tabularnewline
\multicolumn{3}{l}{\textbf{for} $I\in\mathcal{I}$ with $\type(X_{I})$ undetermined }\tabularnewline
 & \multicolumn{2}{l}{\textbf{if }$\type(X_{J})$ is determined for all $X_{J}\in\opt(X_{I})$}\tabularnewline
 &  & compute $\otype(X_{I})$\tabularnewline
 &  & $\type(X_{I}):=(\pty(I),\mex\{c\mid(a,b,c)\in\otype(X_{I})\},\mex\{b\mid(a,b,c)\in\otype(X_{I})\})$\tabularnewline
\multicolumn{3}{l}{\textbf{for} $I,J\in\mathcal{I}$ // \emph{compute the structure classes}}\tabularnewline
 & \multicolumn{2}{l}{\textbf{if} $\type(X_{I})=\type(X_{J})$ and $\Otype(X_{I})=\Otype(X_{J})$ }\tabularnewline
 &  & identify $X_{I}$ and $X_{J}$\tabularnewline
\end{tabular}\tabularnewline
\hline 
\end{tabular}
\par\end{centering}
\caption{\label{fig:AlgorithmGEN}Algorithm to compute the simplified structure
digraph of $\protect\gen(G)$.}
\end{figure}

If $I$ and $J$ are intersection subgroups, it is useful to define
$\mathcal{K}:=\{K\in\mathcal{I}\mid I\subseteq K\text{ and }J\not\subseteq K\}$. 
\begin{lem}
\label{lem:AlgorithmGEN}If $I,J\in\mathcal{I}$ with $I\leq J$,
then $[I,J)=\{J\cap K\mid K\in\mathcal{K}\}$.
\end{lem}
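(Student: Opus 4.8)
The plan is to verify the two inclusions directly; the only ingredients are the fact that $\mathcal{I}$ is closed under intersection and the reading of the interval notation, namely $[I,J)=\{L\in\mathcal{I}\mid I\subseteq L\subset J\}$ in analogy with the already-used $(-\infty,\cdot)$. I would begin by recording the closure fact: if $I_{1}=\cap\mathcal{N}_{1}$ and $I_{2}=\cap\mathcal{N}_{2}$ with $\emptyset\neq\mathcal{N}_{1},\mathcal{N}_{2}\subseteq\mathcal{M}$, then $I_{1}\cap I_{2}=\cap(\mathcal{N}_{1}\cup\mathcal{N}_{2})\in\mathcal{I}$. In particular $J\cap K\in\mathcal{I}$ for every $K\in\mathcal{K}$, so the set on the right-hand side of the claimed identity indeed consists of elements of $\mathcal{I}$.

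For the inclusion $\{J\cap K\mid K\in\mathcal{K}\}\subseteq[I,J)$, I would take $K\in\mathcal{K}$, so that $I\subseteq K$ and $J\not\subseteq K$. Then $J\cap K\in\mathcal{I}$ by the closure remark; from $I\subseteq J$ (the hypothesis $I\leq J$) and $I\subseteq K$ we get $I\subseteq J\cap K$; and $J\cap K\subseteq J$, the containment being strict since $J\cap K=J$ would give $J\subseteq K$, contradicting $K\in\mathcal{K}$. Hence $J\cap K\in[I,J)$.

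For the reverse inclusion $[I,J)\subseteq\{J\cap K\mid K\in\mathcal{K}\}$, given $L\in[I,J)$ I would simply take $K:=L$. Then $L\in\mathcal{I}$, $I\subseteq L$, and $J\not\subseteq L$ because $L\subset J$, so $L\in\mathcal{K}$; and $J\cap L=L$ since $L\subseteq J$. Thus $L$ lies in the right-hand side.

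I do not expect a genuine obstacle here: the statement is a bookkeeping identity and the one mildly non-obvious step is realizing that an arbitrary $L$ in the interval is its own witness $K=L$. The only points needing a line of care are the intersection-closure of $\mathcal{I}$ and the degenerate case $I=J$, where both sides are empty and the arguments above apply unchanged. I would close by noting the purpose of the lemma: together with the trivial identity $\bigcup\{J\cap K\mid K\in\mathcal{K}\}=J\cap\bigcup\mathcal{K}$, it gives $J\setminus\bigcup\{L\mid L\in[I,J)\}=J\setminus\bigcup\mathcal{K}$, which is precisely the quantity tested in the ``find arrows'' step of the algorithms in Figures~\ref{fig:AlgorithmDNG} and~\ref{fig:AlgorithmGEN}.
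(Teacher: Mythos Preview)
Your proof is correct and follows essentially the same approach as the paper's: both directions are verified by noting that any $L\in[I,J)$ already lies in $\mathcal{K}$ (with $J\cap L=L$), and that $J\cap K$ for $K\in\mathcal{K}$ lands in $[I,J)$. You are more explicit than the paper, which shows $[I,J)\subseteq\mathcal{K}$ and $\{J\cap K\mid K\in\mathcal{K}\}\subseteq[I,J)$ and then asserts equality without spelling out the intermediate observation $L=J\cap L$; your added remarks on intersection-closure of $\mathcal{I}$ and the degenerate case $I=J$ are welcome clarifications.
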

\begin{proof}
Let $L\in[I,J)$. Then $L=\bigcap\mathcal{N}$ with some $\mathcal{N}\subseteq\mathcal{M}$
such that $I\subseteq L\subset J$, and so $L\in\mathcal{K}$. This
implies that $[I,J)\subseteq\mathcal{K}$. Now, suppose $K\in\mathcal{K}$.
Then $K\cap J\in[I,J)$, and hence $\{K\cap J\mid K\in\mathcal{K}\}\subseteq[I,J)$.
Therefore, $[I,J)=\{J\cap K\mid K\in\mathcal{K}\}$.
\end{proof}
The following result is used by the algorithms in Figures~\ref{fig:AlgorithmDNG}
and \ref{fig:AlgorithmGEN}. 
\begin{prop}
Let $I,J\in\mathcal{I}$. We have $X_{J}\in\opt(X_{I})$ if and only
if $I\subseteq J$ and $J\setminus\bigcup\mathcal{K}\not=\emptyset$.
\end{prop}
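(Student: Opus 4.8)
The plan is to unwind the definition of $X_J \in \opt(X_I)$ and translate it into a condition on the set $\mathcal{K} = \{K \in \mathcal{I} \mid I \subseteq K \text{ and } J \not\subseteq K\}$ using Lemma~\ref{lem:AlgorithmGEN}. By Definition~\ref{def:structureDigraph}, $X_J \in \opt(X_I)$ means $\opt(I) \cap X_J \neq \emptyset$, i.e.\ there is some $g$ with $I \cup \{g\} \in X_J$. The first step is to observe that this forces $I \subseteq J$: indeed $I \cup \{g\} \subseteq J$ gives $I \subseteq J$, so we may assume $I \subseteq J$ from the outset and only worry about the second condition. (If $I \not\subseteq J$, both sides of the claimed equivalence fail, the right side trivially.)

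Assuming $I \subseteq J$, I would show that $X_J \in \opt(X_I)$ is equivalent to the existence of $g \in J \setminus I$ with $I \cup \{g\}$ not contained in any $L \in [I,J) = (-\infty,J) \cap \{L : I \subseteq L\}$; but since any intersection subgroup containing $I \cup \{g\}$ automatically contains $I$, the relevant constraint is exactly that $I \cup \{g\}$ is not contained in any $L \in [I,J)$. Now $g \notin I$ and $g \in J$; the condition ``$I \cup \{g\} \not\subseteq L$ for all $L \in [I,J)$'' is, given $I \subseteq L$, the same as ``$g \notin L$ for all $L \in [I,J)$'', i.e.\ $g \in J \setminus \cup [I,J)$. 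So $X_J \in \opt(X_I)$ holds if and only if $J \setminus \cup[I,J) \neq \emptyset$. Finally I invoke Lemma~\ref{lem:AlgorithmGEN}, which gives $[I,J) = \{J \cap K \mid K \in \mathcal{K}\}$, hence $\cup[I,J) = \cup\{J \cap K \mid K \in \mathcal{K}\} = J \cap \cup\mathcal{K}$; since we are intersecting everything with $J$ and asking whether $J$ minus this is nonempty, $J \setminus (J \cap \cup\mathcal{K}) = J \setminus \cup\mathcal{K}$, so the condition becomes $J \setminus \cup\mathcal{K} \neq \emptyset$, as desired.

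One subtlety to handle carefully: I must make sure that when $g \in J \setminus \cup[I,J)$, the set $I \cup \{g\}$ genuinely lies in $X_J$ rather than being contained in some intersection subgroup strictly between $I$ and $J$ that happens not to contain $I$ — but no such subgroup is a concern because any $L \in \mathcal{I}$ with $I \cup \{g\} \subseteq L$ satisfies $I \subseteq L$, placing it in $[I,J)$ (if $L \subset J$) and contradicting $g \notin \cup[I,J)$. Conversely if $g \in \cup\mathcal{K}$, then $g \in J \cap K$ for some $K \in \mathcal{K}$, and $J \cap K \in [I,J)$ contains $I \cup \{g\}$, so $I \cup \{g\} \notin X_J$. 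I expect the main (though modest) obstacle to be bookkeeping the two slightly different ambient conditions — ``contained in some $L \in (-\infty,J)$'' from the definition of $X_J$ versus ``contained in some $L \in [I,J)$'' which is what matters once $I \subseteq I \cup \{g\}$ is used — and making the reduction from $\cup[I,J)$ to $\cup\mathcal{K}$ cleanly via Lemma~\ref{lem:AlgorithmGEN}. Everything else is a direct unpacking of definitions.
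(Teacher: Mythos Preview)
Your proposal is correct and follows essentially the same approach as the paper: both arguments unpack the condition $I\cup\{g\}\in X_J$, reduce the constraint from $(-\infty,J)$ to $[I,J)$ using $I\subseteq I\cup\{g\}$, and then invoke Lemma~\ref{lem:AlgorithmGEN} to pass between $[I,J)$ and $\mathcal{K}$. Your presentation as a single chain of equivalences (and your explicit computation $\cup[I,J)=J\cap\cup\mathcal{K}$) is slightly cleaner than the paper's two-direction argument, but the content is the same.
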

\begin{proof}
Assume $I\leq J$. First, assume that $X_{J}\in\opt(X_{I})$. Then
there exists $g\in G\setminus I$ such that $I\cup\{g\}\in X_{J}$.
This means there exists $g\in J\setminus I$ such that $I\cup\{g\}\subseteq J$
but $I\cup\{g\}\nsubseteq L$ for any $L\in(-\infty,J)$. Let $K\in\mathcal{K}$,
so that $I\subseteq K$ and $J\nsubseteq K$. Then by Lemma~\ref{lem:AlgorithmGEN},
$J\cap K\in[I,J)\subseteq(-\infty,J)$. Then it must be the case that
$I\cup\{g\}\nsubseteq K$ for all $K\in\mathcal{K}$, which implies
that $g\notin K$ for all $K\in\mathcal{K}$. Hence $g\notin\bigcup\mathcal{K}$.
But since $g\in J$, $J\setminus\bigcup\mathcal{K}\neq\emptyset$.

Now, assume that there exist $g\in J\setminus\bigcup\mathcal{K}$.
Then $I\cup\{g\}\subseteq J$ since $I\leq J$. Moreover, since $g\in J\setminus\bigcup\mathcal{K}$,
$g\notin K$ for all $K\in\mathcal{K}$. But then $g\notin J\cap K$
for all $K\in\mathcal{K}$. Thus, $X_{J}\in\opt(X_{I})$, as desired.
\end{proof}

\section{Cyclic groups\label{sec:Cyclic}}

In this section we study the avoidance game $\dng(G)$ and the achievement
game $\gen(G)$ for a cyclic group $G$. First, we recall some general
results about maximal subgroups.

According to \cite[Theorem 2]{Dlab1960} and \cite[Exercise 7 on page 144]{Suzuki},
we can decompose the Frattini subgroup of a direct product.
\begin{prop}
\label{prop:FrattiniProduct}If $G$ and $H$ are finite groups, then
$\Phi(G\times H)=\Phi(G)\times\Phi(H)$.
\end{prop}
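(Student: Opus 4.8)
The plan is to prove the two inclusions $\Phi(G)\times\Phi(H)\subseteq\Phi(G\times H)$ and $\Phi(G\times H)\subseteq\Phi(G)\times\Phi(H)$ separately, working from the characterization of the Frattini subgroup as the intersection of all maximal subgroups (together with the convention that $\Phi(K)=K$ when $K$ has no maximal subgroups, which cannot occur for nontrivial finite groups but is vacuously fine). The first step is to understand the maximal subgroups of $G\times H$. The key structural fact I would establish is that if $M$ is a maximal subgroup of $G\times H$, then $M$ contains either $G\times\{1\}$ or $\{1\}\times H$; equivalently, $M$ is of the form $M'\times H$ with $M'$ maximal in $G$, or $G\times M''$ with $M''$ maximal in $H$. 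This follows because $M\cdot(G\times\{1\})$ is a subgroup containing $M$, hence is either $M$ or all of $G\times H$, and similarly for $\{1\}\times H$; if $M$ contained neither normal factor then $M\cdot(G\times\{1\})=M\cdot(\{1\}\times H)=G\times H$, and a short argument using the modular law (or directly, writing a general element $(g,h)$ as a product) forces $M=G\times H$, a contradiction. I expect verifying this dichotomy cleanly to be the main obstacle, since one must be slightly careful that $M$ need not itself be a direct product a priori — but once $G\times\{1\}\subseteq M$, the correspondence theorem gives $M=M'\times H$ with $M'=\{g\in G\mid (g,1)\in M\}$ maximal in $G$.

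Granting the dichotomy, the inclusion $\Phi(G\times H)\subseteq\Phi(G)\times\Phi(H)$ follows by intersecting: the maximal subgroups of the form $M'\times H$ intersect to $\Phi(G)\times H$, those of the form $G\times M''$ intersect to $G\times\Phi(H)$, and intersecting these two gives $\Phi(G)\times\Phi(H)$; since $\Phi(G\times H)$ is the intersection over \emph{all} maximal subgroups, it is contained in this. For the reverse inclusion $\Phi(G)\times\Phi(H)\subseteq\Phi(G\times H)$, I would take an arbitrary maximal subgroup $M$ of $G\times H$ and show $\Phi(G)\times\Phi(H)\subseteq M$: by the dichotomy $M=M'\times H$ or $M=G\times M''$, and in the first case $\Phi(G)\subseteq M'$ gives $\Phi(G)\times\Phi(H)\subseteq M'\times H=M$, with the second case symmetric. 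Intersecting over all such $M$ yields the claim.

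One should also dispatch the degenerate cases where $G$ or $H$ is trivial: if $H=\mathbb{Z}_1$ then $G\times H\cong G$ and $\Phi(H)=H$, so both sides equal $\Phi(G)$, consistent with the convention that the trivial group is its own Frattini subgroup; the argument above handles this automatically since a trivial factor contributes no maximal subgroups of its "type." Alternatively, as the excerpt notes, this proposition is already available in the literature (\cite[Theorem 2]{Dlab1960}, \cite[Exercise 7 on page 144]{Suzuki}), so the self-contained argument sketched here is offered mainly for completeness, and the write-up can be kept brief.
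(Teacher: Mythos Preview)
The paper does not give its own proof of this proposition; it simply records the statement with citations to \cite[Theorem~2]{Dlab1960} and \cite[Exercise~7 on page~144]{Suzuki}. So there is nothing to compare against on the paper's side, and your self-contained argument is a reasonable thing to attempt. However, it contains a genuine error.

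The dichotomy you assert --- that every maximal subgroup $M$ of $G\times H$ contains $G\times\{1\}$ or $\{1\}\times H$ --- is false. Take $G=H=\mathbb{Z}_p$ for a prime $p$ and let $M=\{(x,x):x\in\mathbb{Z}_p\}$ be the diagonal. Then $M$ is maximal (it has index $p$) but contains neither direct factor. In this example one does have $M(G\times\{1\})=M(\{1\}\times H)=G\times H$, yet $M\neq G\times H$; so the ``short argument using the modular law'' you allude to cannot succeed. (There is also a small slip in your write-up: if $G\times\{1\}\subseteq M$ then the correspondence theorem gives $M=G\times M''$, not $M'\times H$.)

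This breaks exactly your proof of the inclusion $\Phi(G)\times\Phi(H)\subseteq\Phi(G\times H)$; your argument for the reverse inclusion is fine, since it only uses that subgroups of the form $M'\times H$ and $G\times M''$ \emph{are} maximal, not that every maximal subgroup has this form. A correct argument stays close in spirit to yours but works with the normal subgroup $\{1\}\times\Phi(H)$ rather than the full factor $\{1\}\times H$: if a maximal $M$ fails to contain $\{1\}\times\Phi(H)$, normality gives $(\{1\}\times\Phi(H))M=G\times H$; projecting to $H$ yields $\Phi(H)\,\pi_2(M)=H$, whence $\pi_2(M)=H$; then Dedekind's modular law applied to $\{1\}\times\Phi(H)\subseteq\{1\}\times H$ gives $\Phi(H)N_2=H$ with $N_2=\{h:(1,h)\in M\}$, forcing $N_2=H$ and hence $\{1\}\times H\subseteq M$, which together with $\pi_1(M)=G$ contradicts $M$ being proper. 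Symmetrically $\Phi(G)\times\{1\}\subseteq M$, and the desired inclusion follows.
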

The following result can be found in \cite[Corollary 2]{Dlab1960}
and \cite[Problem 8.1]{Dixon} and is a consequence of Proposition~\ref{prop:FrattiniProduct}.
\begin{prop}
\label{prop:FrattiniCyclic}If $n$ has prime factorization $n=p_{1}^{n_{1}}\cdots p_{k}^{n_{k}}$,
then the Frattini subgroup of $\mathbb{Z}_{n}$ is generated by $p_{1}\cdots p_{k}$
and so it is isomorphic to the cyclic group of order $p_{1}^{n_{1}-1}\cdots p_{k}^{n_{k}-1}$.
\end{prop}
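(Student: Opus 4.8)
The plan is to establish the result for prime powers first and then bootstrap to the general case using Proposition~\ref{prop:FrattiniProduct} together with the Chinese Remainder Theorem. The key structural fact is that the maximal subgroups of $\mathbb{Z}_{n}$ are exactly the subgroups of prime index, i.e. the subgroups of the form $\langle p\rangle$ where $p$ is a prime divisor of $n$; this follows because the subgroup lattice of $\mathbb{Z}_{n}$ is anti-isomorphic to the divisor lattice of $n$, so a subgroup is maximal precisely when the corresponding divisor is minimal above $1$, which means it is prime.

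First I would treat the case $n=p^{m}$. Here the only prime divisor is $p$, so there is a unique maximal subgroup, namely $\langle p\rangle$, and hence $\Phi(\mathbb{Z}_{p^{m}})=\langle p\rangle\cong\mathbb{Z}_{p^{m-1}}$, which matches the claimed formula. Next, for general $n=p_{1}^{n_{1}}\cdots p_{k}^{n_{k}}$, I would invoke the ring/group isomorphism $\mathbb{Z}_{n}\cong\mathbb{Z}_{p_{1}^{n_{1}}}\times\cdots\times\mathbb{Z}_{p_{k}^{n_{k}}}$ and apply Proposition~\ref{prop:FrattiniProduct} inductively to get
\[
\Phi(\mathbb{Z}_{n})\cong\Phi(\mathbb{Z}_{p_{1}^{n_{1}}})\times\cdots\times\Phi(\mathbb{Z}_{p_{k}^{n_{k}}})\cong\mathbb{Z}_{p_{1}^{n_{1}-1}}\times\cdots\times\mathbb{Z}_{p_{k}^{n_{k}-1}},
\]
and the last product is cyclic of order $p_{1}^{n_{1}-1}\cdots p_{k}^{n_{k}-1}$ by the Chinese Remainder Theorem, since the prime powers involved are pairwise coprime. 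This proves the order claim.

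To identify the generator explicitly as $p_{1}\cdots p_{k}$, I would argue directly inside $\mathbb{Z}_{n}$: the Frattini subgroup is the intersection of the maximal subgroups $\langle p_{i}\rangle$ for $i=1,\dots,k$, and an element $x$ lies in $\langle p_{i}\rangle$ iff $p_{i}\mid x$, so $\Phi(\mathbb{Z}_{n})=\{x\mid p_{1}\cdots p_{k}\mid x\}=\langle p_{1}\cdots p_{k}\rangle$; its order is $n/(p_{1}\cdots p_{k})=p_{1}^{n_{1}-1}\cdots p_{k}^{n_{k}-1}$, consistent with the computation above. I do not anticipate a serious obstacle; the only point requiring a little care is the justification that the maximal subgroups are precisely those of prime index (equivalently, that every proper subgroup sits inside one of the $\langle p_{i}\rangle$), which is an elementary consequence of the divisor-lattice description of subgroups of a cyclic group.
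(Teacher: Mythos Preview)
Your argument is correct. The paper does not actually prove this proposition in-line; it simply cites external references and remarks that the result is a consequence of Proposition~\ref{prop:FrattiniProduct}, which is precisely the route you take (CRT decomposition into prime-power cyclic factors, then $\Phi$ of a product is the product of the $\Phi$'s). Your additional direct computation identifying the generator as $p_{1}\cdots p_{k}$ by intersecting the maximal subgroups $\langle p_{i}\rangle$ is a nice self-contained alternative that the paper does not spell out.
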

The next result follows from \cite[Exercise 6.1.4 ]{DummitFoote}
and \cite[Problem 140(v)]{Rose}.
\begin{prop}
\label{prop:maximalIndex}A subgroup of a finite abelian group is
maximal if and only if it has prime index.
\end{prop}
Now we are ready to prove our first result about nim-numbers for finite
cyclic groups.

\begin{figure}
\begin{tabular}{ccccc}
\includegraphics{DGraphZ4k} &  & \includegraphics{DGraphZ2k+1} &  & \includegraphics{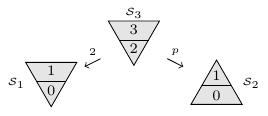}\tabularnewline
(a) $\dng(\mathbb{Z}_{4k})$ &  & (b) $\dng(\mathbb{Z}_{2k+1})$ &  & (c) $\dng(\mathbb{Z}_{4k+2})$\tabularnewline
 &  & $\ \dng(\mathbb{Z}_{2})$ &  & \tabularnewline
\includegraphics{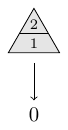} &  & \includegraphics{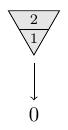} &  & \includegraphics{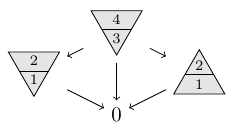}\tabularnewline
(d) $\gen(\mathbb{Z}_{4k})$ &  & (e) $\gen(\mathbb{Z}_{2k+1})$ &  & (f) $\gen(\mathbb{Z}_{4k+2})$\tabularnewline
 &  & $\,\gen(\mathbb{Z}_{2})$ &  & \tabularnewline
\end{tabular}

\caption{\label{fig:structureDiagsCyclic}Simplified structure diagrams for
cyclic groups assuming $k\ge1$. }
\end{figure}

\begin{prop}
\label{prop:DNGZ4k}If $k\ge1$, then the simplified structure diagram
of $\dng(\mathbb{Z}_{4k})$ is the one shown in Figure~\ref{fig:structureDiagsCyclic}(a),
and hence $\dng(\mathbb{Z}_{4k})=*0$.
\end{prop}
\begin{proof}
The result follows from Proposition~\ref{prop:DNGevenFrat} since
$\Phi(\mathbb{Z}_{4k})$ is even by Proposition~\ref{prop:FrattiniCyclic}.
\end{proof}
\begin{prop}
\label{prop:DNGZ4k+2}If $k\ge1$, then the simplified structure diagram
of $\dng(\mathbb{Z}_{4k+2})$ is the one shown in Figure~\ref{fig:structureDiagsCyclic}(c),
and hence $\dng(\mathbb{Z}_{4k+2})=*3$.

\end{prop}
\begin{proof}
Define the following collection of sets that form a partition of $\mathcal{X}$:
\[
\begin{aligned}\mathcal{S}_{1} & :=\{X_{I}\in\mathcal{X}\mid I=\langle2\rangle\};\\
\mathcal{S}_{2} & :=\{X_{I}\in\mathcal{X}\mid X_{I}\text{ is even}\};\\
\mathcal{S}_{3} & :=\{X_{I}\in\mathcal{X}\mid X_{I}\text{ is odd but }I\neq\langle2\rangle\}.
\end{aligned}
\]
We use structural induction on the structure classes to show that
these sets are nonempty and are the type equivalence classes of the
structure classes, and that

\[
\type(X_{I})=\begin{cases}
(1,1,0), & X_{I}\in\mathcal{S}_{1}\\
(0,0,1), & X_{I}\in\mathcal{S}_{2}\\
(1,3,2), & X_{I}\in\mathcal{S}_{3}.
\end{cases}
\]

First, note that $\langle2\rangle$ has order $2k+1$, and hence index
2. By Proposition~\ref{prop:maximalIndex}, $\langle2\rangle$ is
a maximal subgroup, which implies that $X_{\langle2\rangle}\in\mathcal{S}_{1}$
is a terminal structure class. Since $\langle2\rangle$ is odd, $\type(X_{\langle2\rangle})=(1,1,0)$.
Hence $\Otype(X_{\langle2\rangle})=\{(1,1,0)\}$.

Next, let $q$ be an odd prime divisor of $4k+2$. Then $\langle q\rangle$
has index $q$, and so $\langle q\rangle$ is a maximal subgroup by
Proposition~\ref{prop:maximalIndex}. Moreover, $\langle q\rangle$
has even order. It follows that $X_{\langle q\rangle}\in\mathcal{S}_{2}$,
and hence $\mathcal{S}_{2}\neq\emptyset$. Let $X_{I}\in\mathcal{S}_{2}$.
If $X_{I}$ is terminal, then $\type(X_{I})=(0,0,1)$ and $\otype(X_{I})=\emptyset$.
If $X_{I}$ is not terminal, then any option of $X_{I}$ must be even
since $X_{I}$ is even. In this case, $\otype(X_{I})=\{(0,0,1)\}$
by induction. In both cases, $\type(X_{I})=(0,0,1)$, and so $\Otype(X_{I})=\{(0,0,1)\}$.

For the final case, suppose $4k+2$ has prime factorization $2p_{1}^{n_{1}}\cdots p_{r}^{n_{r}}$,
where the $p_{i}$'s are distinct odd primes. By Proposition~\ref{prop:FrattiniCyclic},
$\Phi(\mathbb{Z}_{4k+2})$ is generated by $2p_{1}\cdots p_{r}$ and
is isomorphic to the cyclic group of odd order $p_{1}^{n_{1}-1}\cdots p_{r}^{n_{r}-1}\neq2k+1$.
This implies that $\Phi(\mathbb{Z}_{4k+2})\neq\langle2\rangle$. Hence
$\Phi(\mathbb{Z}_{4k+2})\in\mathcal{S}_{3}$, and so $\mathcal{S}_{3}\neq\emptyset$.\textcolor{red}{{}
}Let $X_{I}\in\mathcal{S}_{3}$ so that $I$ is an odd intersection
subgroup different from $\langle2\rangle$. Since the Frattini subgroup
is a subgroup of $I$, $I=\langle2a\rangle$ for some $a\neq1$ that
divides $p_{1}\cdots p_{r}$. This implies that $\langle I\cup\{2\}\rangle=\langle2\rangle$,
and so $X_{\langle2\rangle}\in\mathcal{S}_{1}$ is an option of $X_{I}$.
Now, let $p$ be a prime divisor of $a$, which is odd. Then $\langle I\cup\{p\}\rangle=\langle p\rangle$
has index $p$, and hence is an even maximal subgroup by Proposition~\ref{prop:maximalIndex}.
This implies that $X_{\langle p\rangle}\in\mathcal{S}_{2}$ is an
option of $X_{I}$. Thus, $\otype(X_{I})$ is either $\{(1,1,0),(0,0,1)\}$
or $\{(1,1,0),(0,0,1),(1,3,2)\}$ by induction. In both cases, $\type(X_{I})=(1,3,2)$.
Therefore, $\Otype(X_{I})=\{(1,1,0),(0,0,1),(1,3,2)\}$. 

It follows that the simplified structure diagram of $\dng(\mathbb{Z}_{4k+2})$
is the one shown in Figure~\ref{fig:structureDiagsCyclic}(c), and
so $\dng(\mathbb{Z}_{4k+2})=*3$. 
\end{proof}
\begin{example}
Figure~\ref{fig:DZ6} shows a representative game digraph and the
simplified structure diagram for $\dng(\mathbb{Z}_{6})$.
\end{example}
\begin{figure}
\begin{tabular}{cccc}
\includegraphics{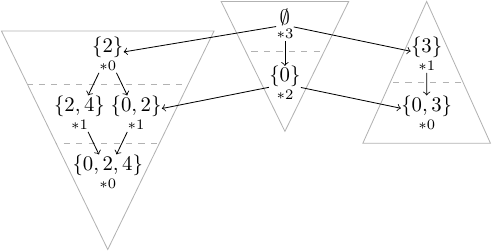} &  &  & \raisebox{1.5cm}{\includegraphics{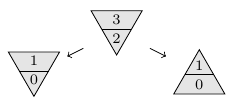}}\tabularnewline
(a) &  &  & (b)\tabularnewline
\end{tabular}

\caption{\label{fig:DZ6}A representative game digraph and the simplified structure
diagram for $\protect\dng(\mathbb{Z}_{6})$.}
\end{figure}

An easy calculation in the $\mathbb{Z}_{2}$ case together with Propositions~\ref{prop:DNGZ4k},
\ref{prop:DNGZ4k+2}, and \ref{prop:DNGodd} immediately yield the
following result.
\begin{cor}
\label{cor:DNGZn}The nim-number of $\dng(\mathbb{Z}_{2})$ is $1$.
If $n\ge3$, then 
\[
\dng(\mathbb{Z}_{n})=\begin{cases}
*1, & n\equiv_{2}1\\
*0, & n\equiv_{4}0\\
*3, & n\equiv_{4}2
\end{cases}.
\]
\end{cor}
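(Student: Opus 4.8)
The plan is to assemble Corollary~\ref{cor:DNGZn} from the three cases already established, matched to the residue of $n$ modulo $4$. First I would handle $\mathbb{Z}_2$ and the odd case together: when $n$ is odd (including the degenerate small cases $n\geq 3$ odd, and separately $n=2$ which needs its own mention), the group $\mathbb{Z}_n$ has odd order, so Proposition~\ref{prop:DNGodd} applies directly to give $\dng(\mathbb{Z}_n)=*1$; this covers $n\equiv_2 1$. For $n=2$, one checks by hand (or notes it already appears in Figure~\ref{fig:structureDiagsCyclic}(b)) that $\dng(\mathbb{Z}_2)=*1$: the only non-generating subset is $\emptyset$, so the first player has no legal move and the previous player wins with nim-number... wait, actually the first player loses, so it is a $P$-position with nim-number... let me just say it follows from the structure diagram in Figure~\ref{fig:structureDiagsCyclic}(b), which the corollary statement explicitly bundles with the odd case.

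\begin{proof}
The claim for $\mathbb{Z}_2$ follows from the structure diagram in Figure~\ref{fig:structureDiagsCyclic}(b). Now assume $n\geq 3$. If $n\equiv_2 1$, then $\mathbb{Z}_n$ has odd order, so Proposition~\ref{prop:DNGodd} gives $\dng(\mathbb{Z}_n)=*1$. If $n\equiv_4 0$, write $n=4k$ with $k\geq 1$; then Proposition~\ref{prop:DNGZ4k} gives $\dng(\mathbb{Z}_n)=*0$. If $n\equiv_4 2$, write $n=4k+2$ with $k\geq 1$ (using $n\geq 3$, in fact $n\geq 6$); then Proposition~\ref{prop:DNGZ4k+2} gives $\dng(\mathbb{Z}_n)=*3$. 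These three cases exhaust the residues of $n$ modulo $4$ among integers $n\geq 3$, which establishes the stated formula.
\end{proof}

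There is essentially no obstacle here: the corollary is a bookkeeping consequence of Propositions~\ref{prop:DNGZ4k}, \ref{prop:DNGZ4k+2}, and \ref{prop:DNGodd}, exactly as the sentence preceding the corollary in the excerpt already announces. The only mild subtlety is the boundary case $n=2$, which is even but not of the form $4k$ or $4k+2$ with $k\geq 1$, and so must be cited separately (its nim-number being $*1$, matching the odd-order diagram). The one thing worth double-checking is that the hypothesis $k\geq 1$ in Propositions~\ref{prop:DNGZ4k} and \ref{prop:DNGZ4k+2} is genuinely satisfied once $n\geq 3$: for $n\equiv_4 0$ we need $n\geq 4$ (automatic), and for $n\equiv_4 2$ we need $n\geq 6$ (automatic since $n=2$ is excluded). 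No new ideas are required.
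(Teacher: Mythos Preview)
Your formal proof is correct and matches the paper exactly: the paper simply states that Propositions~\ref{prop:DNGZ4k}, \ref{prop:DNGZ4k+2}, and \ref{prop:DNGodd} immediately yield the corollary, and your case split does precisely this. One minor note: your pre-proof musing about $\mathbb{Z}_2$ is garbled (the non-generating subsets are $\emptyset$ and $\{0\}$, so the first player \emph{can} move and wins), but your actual proof wisely sidesteps this by citing Figure~\ref{fig:structureDiagsCyclic}(b), which is exactly how the paper handles it.
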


\begin{prop}
The nim-number of $\gen(\mathbb{Z}_{n})$ with $n\ge2$ is one larger
than the nim-number of $\dng(\mathbb{Z}_{n})$.
\end{prop}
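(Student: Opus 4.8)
The plan is to establish the claimed relationship $\gen(\mathbb{Z}_n) = *(\nim(\dng(\mathbb{Z}_n))+1)$ for $n \ge 2$ by computing the simplified structure diagrams of $\gen(\mathbb{Z}_n)$ in each residue class and reading off the nim-number, just as was done for $\dng$. Concretely, I would prove the three propositions parallel to Propositions~\ref{prop:DNGZ4k}, \ref{prop:DNGZ4k+2}, and \ref{prop:DNGodd} that assert the simplified structure diagrams of $\gen(\mathbb{Z}_n)$ are the ones drawn in Figure~\ref{fig:structureDiagsCyclic}(d), (e), (f), giving $\gen(\mathbb{Z}_{4k})=*1$, $\gen(\mathbb{Z}_{2k+1})=*2$, and $\gen(\mathbb{Z}_{4k+2})=*4$ (together with the small base cases $\gen(\mathbb{Z}_2)$, handled directly). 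Comparing with Corollary~\ref{cor:DNGZn} then yields the stated ``one larger'' conclusion term by term; the $n\equiv_2 1$ case also needs $n=3$ covered, which falls under the odd proposition or is checked by hand.

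The technical heart is a structural induction on the structure classes of $\gen(\mathbb{Z}_n)$, entirely analogous to the proof of Proposition~\ref{prop:DNGZ4k+2}. The key new ingredient relative to $\dng$ is the terminal class $X_G$ and the semi-terminal classes: I would identify which intersection subgroups $I$ of $\mathbb{Z}_n$ are semi-terminal, i.e.\ for which there is $g$ with $\langle I\cup\{g\}\rangle = \mathbb{Z}_n$. Since every maximal subgroup of $\mathbb{Z}_n$ has prime index (Proposition~\ref{prop:maximalIndex}), $I=\langle d\rangle$ is semi-terminal precisely when $d$ is squarefree; this controls which classes attach to $X_G$. For $n=4k$ (so $\Phi$ is even), every structure class is even and one shows by induction $\type(X_I)=(0,0,0)$ for semi-terminal classes and $(0,1,0)$ otherwise, giving nim-number $1$ at the source. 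For $n=2k+1$, every class is odd and one tracks types in $\{t_0,t_1,t_2,t_3\}$ from Proposition~\ref{prop:genTypes}, concluding the source has type $t_2=(1,2,0)$ hence nim-number $2$. For $n=4k+2$, I would mimic the partition $\mathcal{S}_1,\mathcal{S}_2,\mathcal{S}_3$ of Proposition~\ref{prop:DNGZ4k+2}, augmented with the data about which classes in each piece are semi-terminal, and push through the $\mex$ computations to obtain nim-number $4$ at $X_{\Phi(\mathbb{Z}_{4k+2})}$.

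The main obstacle I anticipate is the $n\equiv_4 2$ case: the presence of the terminal class $X_G$ breaks the clean three-class partition used for $\dng(\mathbb{Z}_{4k+2})$, because a structure class $X_I$ and another class with the same type in the $\dng$ sense may differ in $\gen$ according to whether $X_G$ is among their options (exactly the phenomenon noted in the $\gen(\mathbb{Z}_6\times\mathbb{Z}_3)$ example). So I would need a finer partition — likely splitting each of $\mathcal{S}_1,\mathcal{S}_2,\mathcal{S}_3$ into semi-terminal and non-semi-terminal parts — and verify that the resulting (at most six) classes really are the type-equivalence classes, checking nonemptiness of each and the induced option structure. Establishing that the diagram stabilizes to exactly Figure~\ref{fig:structureDiagsCyclic}(f) and that no further collapse or splitting occurs is the delicate bookkeeping step; once the diagram is pinned down, reading off $\gen(\mathbb{Z}_{4k+2})=*4$ and comparing with $\dng(\mathbb{Z}_{4k+2})=*3$ is immediate. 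An alternative, less diagram-heavy route would be to prove the ``one larger'' statement directly by exhibiting, for each residue class, that adding the semi-terminal arrows to $X_G$ shifts the source nim-number by exactly one; but I expect the diagram computation to be cleaner and to reuse the machinery already built.
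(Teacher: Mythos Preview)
Your case-by-case approach would work, but it is much heavier than what the paper actually does. The paper's proof is essentially the ``alternative, less diagram-heavy route'' you mention at the end and then set aside: it argues directly on the game digraph, uniformly in $n$, without any split into residue classes. The key observation is that because $\mathbb{Z}_n$ is cyclic, \emph{every} non-generating subset $P$ (i.e.\ every position of $\dng(\mathbb{Z}_n)$) has an option $P\cup\{g\}$ that generates, simply by taking $g$ to be any generator of $\mathbb{Z}_n$. Thus in passing from $\dng$ to $\gen$, every old position acquires a new terminal option with nim-number $0$. A straightforward induction down the game digraph then shows that every old nim-number increases by exactly one: if all old options of $P$ have had their nim-numbers shifted up by one, then $\nopt_{\gen}(P)=\{0\}\cup\{j+1:j\in\nopt_{\dng}(P)\}$, whose mex is $\mex(\nopt_{\dng}(P))+1$.

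Your proposal also contains a small misconception that would have surfaced during the case analysis: you write that $I=\langle d\rangle$ is semi-terminal ``precisely when $d$ is squarefree.'' In fact every intersection subgroup of $\mathbb{Z}_n$ already has squarefree $d$ (intersections of the maximal subgroups $\langle p\rangle$ give exactly the $\langle d\rangle$ with $d$ a squarefree divisor of $n$), and more to the point every such $I$ is semi-terminal, since adding a generator of $\mathbb{Z}_n$ always works. So there is no genuine semi-terminal/non-semi-terminal split to track, which is precisely why the paper's uniform argument goes through so cleanly. Your finer partitions in the $n\equiv_4 2$ case would collapse accordingly.
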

\begin{proof}
The game digraph of $\gen(\mathbb{Z}_{n})$ is an extension of the
game digraph of $\dng(\mathbb{Z}_{n})$. In this extension every old
vertex in the game digraph of $\dng(\mathbb{Z}_{n})$ gets a new option
that is a generating set since $\mathbb{Z}_{n}$ is cyclic. These
generating sets are terminal positions with nim-number $0$. Figures~\ref{fig:GENZ4Full}(a)
and \ref{fig:GENZ4Full}(b) show this extension for $\mathbb{Z}_{4}$.
After the extension, the nim-number of every old vertex is increased
by one. Figure~\ref{fig:structureDiagsCyclic} shows how the structure
diagrams change during the extension.
\end{proof}
\begin{cor}
The nim-number of $\gen(\mathbb{Z}_{2})$ is $2$. If $n\ge3$, then
\[
\gen(\mathbb{Z}_{n})=\begin{cases}
*2, & n\equiv_{2}1\\
*1, & n\equiv_{4}0\\
*4, & n\equiv_{4}2
\end{cases}.
\]
\end{cor}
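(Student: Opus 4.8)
The final statement to prove is the corollary giving $\gen(\mathbb{Z}_2)=*2$ and the three-case formula for $\gen(\mathbb{Z}_n)$ with $n\ge 3$. The plan is to combine the two results immediately preceding it: Corollary~\ref{cor:DNGZn}, which records $\dng(\mathbb{Z}_2)=*1$ and the analogous three-case formula for $\dng(\mathbb{Z}_n)$, and the proposition stating that $\nim(\gen(\mathbb{Z}_n))$ exceeds $\nim(\dng(\mathbb{Z}_n))$ by exactly one for every $n\ge 2$. So this is a one-line deduction rather than a fresh argument.

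Concretely, I would proceed as follows. First, apply the preceding proposition with $n=2$: since $\dng(\mathbb{Z}_2)=*1$, we get $\gen(\mathbb{Z}_2)=*2$. Next, for $n\ge 3$, invoke the same proposition together with the three-case formula of Corollary~\ref{cor:DNGZn}: in the case $n\equiv_2 1$ we have $\dng(\mathbb{Z}_n)=*1$, hence $\gen(\mathbb{Z}_n)=*2$; in the case $n\equiv_4 0$ we have $\dng(\mathbb{Z}_n)=*0$, hence $\gen(\mathbb{Z}_n)=*1$; and in the case $n\equiv_4 2$ we have $\dng(\mathbb{Z}_n)=*3$, hence $\gen(\mathbb{Z}_n)=*4$. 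These three subcases exhaust all $n\ge 3$ since every integer is either odd, divisible by $4$, or congruent to $2$ mod $4$, which is exactly the case split already used in Corollary~\ref{cor:DNGZn}. One should also note that the figure reference in the preceding proposition (Figure~\ref{fig:structureDiagsCyclic}) already displays the corresponding simplified structure diagrams for $\gen$, so no further diagram work is needed.

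There is essentially no obstacle here: the real content lives in the proposition relating the $\gen$ and $\dng$ game digraphs (where one checks that adjoining, to each old non-generating position, a single new terminal option of nim-number $0$ raises every old position's nim-value by exactly one), and that proposition is already proved. The only thing to be mildly careful about is bookkeeping of the congruence classes and the special role of $n=2$ — for $\mathbb{Z}_2$ the avoidance game has the degenerate structure diagram in Figure~\ref{fig:structureDiagsCyclic}(b),(e), so it is listed separately, but the arithmetic $1+1=2$ is the same. I would write the proof as a short paragraph: "This follows immediately from Corollary~\ref{cor:DNGZn} and the previous proposition," followed by the explicit case-by-case increment, and stop there.
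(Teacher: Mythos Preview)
Your proposal is correct and matches the paper's approach exactly: the paper states this result as an immediate corollary of the preceding proposition (that $\nim(\gen(\mathbb{Z}_n))=\nim(\dng(\mathbb{Z}_n))+1$ for $n\ge2$) together with Corollary~\ref{cor:DNGZn}, and supplies no separate argument beyond that.
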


\section{Dihedral groups\label{sec:Dihedral}}

In this section we study the avoidance game $\dng(G)$ and the achievement
game $\gen(G)$ for a dihedral group $G$. The following result is
folklore.
\begin{prop}
\label{prop:MaximalDihedral}The subgroups of the dihedral group $\mathbb{D}_{n}=\langle r,f\mid r^{n}=f^{2}=e,rf=fr^{n-1}\rangle$
are either dihedral or cyclic. The maximal subgroups of $\mathbb{D}_{n}$
are the cyclic group $\langle r\rangle$ and the dihedral groups of
the form $\langle r^{p},r^{i}f\rangle\cong\mathbb{D}_{n/p}$ with
prime divisors $p$ of $n$. 
\end{prop}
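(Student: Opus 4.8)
The plan is to prove Proposition~\ref{prop:MaximalDihedral} by a direct case analysis on subgroups of $\mathbb{D}_n$, exploiting the normal cyclic subgroup $\langle r\rangle$ of index $2$. First I would set up notation: every element of $\mathbb{D}_n$ is either a rotation $r^i$ or a reflection $r^i f$, and the rotations form the cyclic subgroup $R=\langle r\rangle$. Given a subgroup $H\le\mathbb{D}_n$, let $H_0 := H\cap R$ be its rotation part; this is a subgroup of the cyclic group $R$, hence $H_0=\langle r^d\rangle$ for the unique $d\mid n$ with $|H_0|=n/d$. There are exactly two possibilities: either $H\subseteq R$, in which case $H=H_0$ is cyclic; or $H$ contains some reflection $r^i f$, in which case $[H:H_0]=2$ (since the reflections in $H$ form a single coset of $H_0$), and $H=\langle r^d, r^i f\rangle$. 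In the latter case one checks that $H$ is itself a dihedral group: conjugating $r^if$ by $r^d$ and multiplying reflections shows $(r^if)r^d(r^if)^{-1}=r^{-d}$ and $(r^if)^2=e$, so $H\cong\mathbb{D}_{n/d}$. This establishes the first sentence of the proposition.

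Next I would identify the maximal subgroups among these. The cyclic subgroup $R=\langle r\rangle$ has index $2$, so it is certainly maximal. For the remaining candidates, consider a proper subgroup $H$ containing a reflection, so $H=\langle r^d, r^if\rangle$ with $d>1$ a proper divisor of $n$. I claim $H$ is maximal if and only if $d=p$ is prime. If $d$ is composite, say $d=p d'$ with $p$ prime and $d'>1$, then $H\subsetneq\langle r^{p}, r^if\rangle\subsetneq\mathbb{D}_n$, so $H$ is not maximal. Conversely, if $d=p$ is prime, I would show any subgroup $K$ with $H\subsetneq K\subseteq\mathbb{D}_n$ equals $\mathbb{D}_n$: its rotation part $K_0=\langle r^e\rangle$ satisfies $e\mid p$, so $e\in\{1,p\}$; if $e=p$ then $K_0=H_0$ and $K=H$ (both contain a reflection and have the same rotation part, forcing equality), contradiction; hence $e=1$, so $r\in K$, and together with the reflection $r^if\in K$ this gives $K=\mathbb{D}_n$. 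Finally I would observe that different primes $p$, and different residues $i$ modulo $p$, can give genuinely distinct (though conjugate) subgroups $\langle r^p, r^if\rangle$, and that the list $\{\langle r\rangle\}\cup\{\langle r^p, r^if\rangle : p\mid n \text{ prime}, 0\le i<p\}$ exhausts the maximal subgroups — every proper subgroup either lies in $R$ (hence in the maximal subgroup $R$) or, if it contains a reflection, is contained in some $\langle r^p, r^if\rangle$ with $p$ a prime divisor of $d$.

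The main obstacle, and the step requiring the most care, is the structural claim that a subgroup containing a reflection has the form $\langle r^d, r^i f\rangle$ with $d\mid n$, together with the verification that two such subgroups with the same $d$ and the same coset $r^iH_0$ of reflections coincide; one must be careful that the reflections in $H$ form exactly one coset of $H_0$ in $H$ (because the product of two reflections is a rotation), which pins down $H$ once $d$ and one reflection are known. The edge cases $n=1,2$ (where $\mathbb{D}_n$ degenerates) and the case $d=n$ (the trivial rotation part, $H=\langle r^if\rangle\cong\mathbb{Z}_2$, which is maximal only when $n$ is prime so that it sits directly below $\mathbb{D}_n$) should be noted but are routine. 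Everything else is a straightforward computation with the dihedral relations $r^n=f^2=e$ and $fr=r^{-1}f$.
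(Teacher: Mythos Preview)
Your proof is correct and complete. The paper does not actually prove this proposition; it simply states it as folklore, so there is nothing to compare your argument against beyond noting that your direct analysis via the normal rotation subgroup $R=\langle r\rangle$ is the standard route to this well-known classification.
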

The following result can be found in \cite[Problem 8.1]{Dixon}.

\begin{prop}
\label{prop:FrattiniDihedral}If $n$ has prime factorization $n=p_{1}^{n_{1}}\cdots p_{k}^{n_{k}}$,
then the Frattini subgroup of $\mathbb{D}_{n}$ is a cyclic group
of order $p_{1}^{n_{1}-1}\cdots p_{k}^{n_{k}-1}$.
\end{prop}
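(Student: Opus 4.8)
The plan is to compute $\Phi(\mathbb{D}_n)$ directly as the intersection of all maximal subgroups, using the explicit classification provided by Proposition~\ref{prop:MaximalDihedral}. By that proposition, the maximal subgroups of $\mathbb{D}_n$ are the rotation subgroup $\langle r\rangle$ together with the dihedral subgroups $\langle r^p, r^i f\rangle$ for each prime divisor $p$ of $n$ and each $i\in\{0,1,\dots,p-1\}$. The intersection of all the maximal subgroups is therefore contained in $\langle r\rangle$, so $\Phi(\mathbb{D}_n)=\langle r^d\rangle$ for some $d\mid n$, and the task reduces to pinning down $d$. First I would observe that a rotation $r^j$ lies in $\langle r^p, r^i f\rangle$ if and only if $r^j\in\langle r^p\rangle$ (the non-rotation coset contributes no rotations beyond those in $\langle r^p\rangle$), i.e.\ if and only if $p\mid j$. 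Hence $r^j\in\Phi(\mathbb{D}_n)$ if and only if $p\mid j$ for every prime divisor $p$ of $n$, which is exactly the condition that $r^j$ be a power of $r^{p_1\cdots p_k}$. Therefore $\Phi(\mathbb{D}_n)=\langle r^{p_1\cdots p_k}\rangle$, a cyclic group whose order is $n/(p_1\cdots p_k)=p_1^{n_1-1}\cdots p_k^{n_k-1}$.

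An alternative, and perhaps cleaner, route is to invoke Proposition~\ref{prop:FrattiniCyclic} together with the general fact that the Frattini subgroup behaves well under quotients in this setting: the maximal subgroups containing $\langle r\rangle$-related data correspond to maximal subgroups of the cyclic quotient, while the extra maximal subgroups $\langle r^p, r^i f\rangle$ are precisely the preimages of the index-$p$ subgroups. Concretely, I would note that $\bigcap_i \langle r^p, r^i f\rangle = \langle r^p\rangle$ already for a single fixed prime $p$ (intersecting over the $p$ translates $i=0,\dots,p-1$ kills all reflections and leaves exactly $\langle r^p\rangle$), so intersecting over all prime divisors $p$ of $n$ gives $\bigcap_p \langle r^p\rangle = \langle r^{\mathrm{lcm \ of \ the} \ r^p} \rangle = \langle r^{p_1\cdots p_k}\rangle$; intersecting further with $\langle r\rangle$ changes nothing. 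Either way one lands on $\Phi(\mathbb{D}_n)\cong \mathbb{Z}_{p_1^{n_1-1}\cdots p_k^{n_k-1}}$.

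The only genuinely delicate point — and the step I would single out as the main obstacle — is the claim that $\langle r^p, r^i f\rangle \cap \langle r\rangle = \langle r^p\rangle$, i.e.\ that adjoining a reflection $r^i f$ to $\langle r^p\rangle$ introduces no new rotations. This is a small structural computation: an arbitrary element of $\langle r^p, r^i f\rangle$ is either of the form $r^{pa}$ or $r^{pa}(r^i f) = r^{pa+i} f$, and the latter is never a rotation; so the rotations in this subgroup are exactly $\langle r^p\rangle$. One must also be mildly careful that $\langle r^p, r^i f\rangle$ really has order $2n/p$ and is a genuine (proper, maximal) dihedral subgroup, but this is covered by Proposition~\ref{prop:MaximalDihedral}. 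Once this lemma-sized observation is in hand, the rest is the number-theoretic bookkeeping of intersecting the cyclic groups $\langle r^p\rangle$ over the primes $p\mid n$, which is routine. I should also dispatch the trivial edge cases $n=1$ (where $\mathbb{D}_1\cong\mathbb{Z}_2$ has trivial Frattini subgroup, consistent with the empty product) and $n=2$ separately if the ambient conventions require it, but these fall out of the same formula.
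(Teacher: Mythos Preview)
Your argument is correct. The paper does not supply its own proof of this proposition; it simply records the statement with a citation to \cite[Problem 8.1]{Dixon}. Your direct computation, intersecting the maximal subgroups listed in Proposition~\ref{prop:MaximalDihedral} and using the observation that the rotations in $\langle r^{p}, r^{i}f\rangle$ are exactly $\langle r^{p}\rangle$, is a clean and fully self-contained route to the result. The step you flag as delicate---that adjoining a reflection to $\langle r^{p}\rangle$ introduces no new rotations---is handled correctly by your coset description, and the remaining intersection $\bigcap_{p\mid n}\langle r^{p}\rangle=\langle r^{p_{1}\cdots p_{k}}\rangle$ is indeed routine number theory. Your treatment of the edge cases is also fine, though strictly speaking the paper only uses this proposition for $n\geq 3$.
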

\begin{figure}
\begin{tabular}{ccccc}
\includegraphics{DGraphZ4k} &  & \includegraphics{DGraphd2k+1} &  & \includegraphics{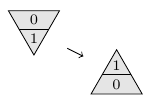}\tabularnewline
(a) $\dng(\mathbb{D}_{4k})$ &  & (b) $\dng(\mathbb{D}_{2k+1})$ &  & (c) $\dng(\mathbb{D}_{4k+2})$\tabularnewline
 &  &  &  & \tabularnewline
\includegraphics{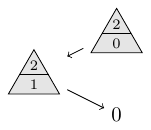} &  & \includegraphics{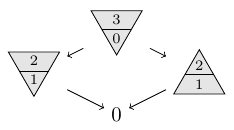} &  & \includegraphics{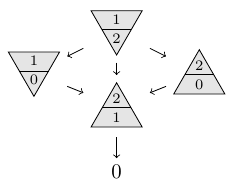}\tabularnewline
(d) $\gen(\mathbb{D}_{4k})$ &  & (e) $\gen(\mathbb{D}_{2k+1})$ &  & (f) $\gen(\mathbb{D}_{4k+2})$\tabularnewline
\end{tabular}

\caption{\label{fig:structureDiagsDihedral}Simplified structure diagrams for
the dihedral groups assuming $k\ge1$.}
\end{figure}

\begin{prop}
If $k\ge1$, then the simplified structure diagrams of $\dng(\mathbb{D}_{4k})$
and $\dng(\mathbb{D}_{4k+2})$ are the ones shown in Figures~\ref{fig:structureDiagsDihedral}(a)
and \ref{fig:structureDiagsDihedral}(c), respectively, and hence
$\dng(\mathbb{D}_{2k+2})=*0$.
\end{prop}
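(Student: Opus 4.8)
The plan is to mimic the structure-class/type-equivalence arguments already used for cyclic groups (Propositions~\ref{prop:DNGZ4k} and \ref{prop:DNGZ4k+2}), exploiting the fact that, by Proposition~\ref{prop:MaximalDihedral}, the only cyclic maximal subgroup of $\mathbb{D}_{2k}$ is $\langle r\rangle$ while all other maximal subgroups are dihedral of the form $\langle r^{p}, r^{i}f\rangle$. For $\mathbb{D}_{4k}$, Proposition~\ref{prop:FrattiniDihedral} gives $|\Phi(\mathbb{D}_{4k})| = $ an even number (since $n=2k$ has $2$ as a prime divisor with exponent $\ge 2$, or more directly $p_1^{n_1-1}\cdots$ still carries a factor of $2$ when $4\mid n$; in fact one only needs $|\Phi|$ even here), so every intersection subgroup contains $\Phi(\mathbb{D}_{4k})$ and hence is even. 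Then a straightforward structural induction shows $\type(X_I)=(0,0,1)$ and $\Otype(X_I)=\{(0,0,1)\}$ for every $X_I\in\mathcal{X}$, exactly as in Proposition~\ref{prop:DNGZ4k}, giving the diagram in Figure~\ref{fig:structureDiagsDihedral}(a) and $\dng(\mathbb{D}_{4k})=*0$.

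For $\mathbb{D}_{4k+2}$ the analysis parallels Proposition~\ref{prop:DNGZ4k+2}. Here $n=2k+1$ is odd, so Proposition~\ref{prop:FrattiniDihedral} says $\Phi(\mathbb{D}_{4k+2})$ is cyclic of odd order, hence an odd intersection subgroup. The relevant trichotomy of structure classes is: $\mathcal{S}_1 := \{X_I \mid I = \langle r\rangle\}$ (a single terminal class, with $\langle r\rangle$ of odd order $2k+1$, so $\pty=1$ and $\type = (1,1,0)$); $\mathcal{S}_2 := \{X_I \mid X_I \text{ is even}\}$; and $\mathcal{S}_3 := \{X_I \mid X_I \text{ is odd but } I \neq \langle r\rangle\}$. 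One checks $\mathcal{S}_2 \neq \emptyset$ (any dihedral maximal subgroup $\langle r^p, r^i f\rangle$ has even order) and $\mathcal{S}_3 \neq \emptyset$ (it contains $X_{\Phi(\mathbb{D}_{4k+2})}$). The induction then shows $\type(X_I) = (0,0,1)$ for $X_I\in\mathcal{S}_2$, $\type(X_I)=(1,1,0)$ for $X_I\in\mathcal{S}_1$, and $\type(X_I)=(1,3,2)$ for $X_I\in\mathcal{S}_3$; the key point for $\mathcal{S}_3$ is that an odd intersection subgroup $I\neq\langle r\rangle$ must be a cyclic subgroup $\langle r^d\rangle$ with $d\ne 1$ (it cannot be dihedral since all dihedral subgroups here have even order, and it cannot be $\langle r\rangle$), so adjoining a suitable generator of $\langle r\rangle$ reaches $\mathcal{S}_1$ while adjoining a reflection or an element of even order reaches $\mathcal{S}_2$, forcing $\otype(X_I) \in \{\{(1,1,0),(0,0,1)\}, \{(1,1,0),(0,0,1),(1,3,2)\}\}$ and hence $\type(X_I)=(1,3,2)$. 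Reading off the type of $X_{\Phi}$ gives $\dng(\mathbb{D}_{4k+2}) = *3$, which combined with the $\mathbb{D}_{4k}$ case yields $\dng(\mathbb{D}_{2k}) = *0$ as the proposition states --- here I would note that the claim $\dng(\mathbb{D}_{2k})=*0$ at the end should presumably summarize both cases with the appropriate values $*0$ and $*3$ per the figures, following the pattern of Corollary~\ref{cor:DNGZn}.

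The main obstacle I anticipate is establishing the group-theoretic facts that make the induction go through cleanly, rather than the induction itself: specifically, (i) confirming that $|\Phi(\mathbb{D}_{4k})|$ is even --- which requires care about the prime factorization of $2k$ and is where Proposition~\ref{prop:FrattiniDihedral} must be applied correctly; and (ii) the structural claim that every odd intersection subgroup of $\mathbb{D}_{4k+2}$ other than $\langle r\rangle$ is of the form $\langle r^d\rangle$ with $d>1$, together with the existence of the two types of options (one landing in $\mathcal{S}_1$, one in $\mathcal{S}_2$) from every such $I$. Once these are in hand, the type computations are routine applications of the $\mex$ recursion and the transition table in the proof of the proposition bounding $\type(X_I)\in\{t_1,t_2,t_3,t_4\}$, and the identification of type-equivalent classes produces exactly the diagrams in Figure~\ref{fig:structureDiagsDihedral}.
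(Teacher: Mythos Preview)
Your treatment of $\mathbb{D}_{4k}$ is correct and matches the paper. The $\mathbb{D}_{4k+2}$ case, however, contains a genuine error: in $\mathbb{D}_{4k+2}$ the rotation subgroup $\langle r\rangle$ has order $4k+2$, which is \emph{even}, not $2k+1$. (The subscript $n$ in $\mathbb{D}_n$ is the order of $r$, and here $n=4k+2$.) Consequently \emph{every} maximal subgroup of $\mathbb{D}_{4k+2}$ is even --- both $\langle r\rangle$ and all the dihedral ones --- so there is no odd terminal structure class and your class $\mathcal{S}_1$ does not exist. Your trichotomy, the type $(1,3,2)$, and the conclusion $\dng(\mathbb{D}_{4k+2})=*3$ are all incorrect; what you have written is essentially the analysis for $\mathbb{D}_{2k+1}$ (the next proposition), where $\langle r\rangle$ is indeed the unique odd maximal subgroup.

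The paper's argument for $\mathbb{D}_{4k+2}$ is instead a simple dichotomy. All terminal classes are even, with type $(0,0,1)$; the Frattini subgroup is odd (your computation of that is right), so odd intersection subgroups exist but are never maximal. An odd intersection subgroup $I$ is a proper cyclic subgroup of $\langle r\rangle$, and every option of $X_I$ is either odd or even; since the even ones all have type $(0,0,1)$ and the odd ones inductively have type $(1,0,1)$, one finds $\type(X_I)=(1,0,1)$ for every odd $X_I$. Reading off the second coordinate of $\type(X_{\Phi})$ gives $\dng(\mathbb{D}_{4k+2})=*0$, which is why the proposition can correctly assert $\dng(\mathbb{D}_{2k})=*0$ for both residues of $k$ --- there is no hidden $*3$ case here.
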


\begin{proof}
The Frattini subgroup of $\mathbb{D}_{4k}$ is even by Proposition~\ref{prop:FrattiniDihedral}.
Thus, the simplified structure diagram of $\dng(\mathbb{D}_{4k})$
is the one depicted in Figure~\ref{fig:structureDiagsDihedral}(a)
by Proposition~\ref{prop:DNGevenFrat}.

The terminal structure classes of $\dng(\mathbb{D}_{4k+2})$ are even
since the maximal subgroups are even by Proposition~\ref{prop:MaximalDihedral}.
On the other hand, the Frattini subgroup is odd by Proposition~\ref{prop:FrattiniDihedral}.
Structural induction on the structure classes shows that $\type(X_{I})=(0,0,1)$
and $\Otype(X_{I})=\{(0,0,1)\}$ if $X_{I}$ is even, while $\type(X_{I})=(1,0,1)$
and $\Otype(X_{I})=\{(1,0,1)\}$ if $X_{I}$ is odd. Every position
that is not terminal has a terminal option since by Proposition~\ref{prop:MaximalDihedral}
we can add an appropriate rotation to the position that creates a
terminal position. Therefore, the simplified structure diagram of
$\dng(\mathbb{D}_{4k+2})$ is the one depicted in Figure~\ref{fig:structureDiagsDihedral}(c).
\end{proof}
\begin{prop}
If $k\ge1$, then the simplified structure diagram of $\dng(\mathbb{D}_{2k+1})$
is the one shown in Figure~\ref{fig:structureDiagsDihedral}(b),
and hence $\dng(\mathbb{D}_{2k+1})=*3$. 
\end{prop}
\begin{proof}
This argument is essentially the same as the one for $\dng(\mathbb{Z}_{4k+2})$
(see the proof of Proposition~\ref{prop:DNGZ4k+2}), where we replace
$2$ with $r$. The only maximal subgroup with odd order is $\langle r\rangle$.
Every other maximal subgroup has even order. 
\end{proof}
\begin{cor}
For $n\ge3$, we have
\[
\dng(\mathbb{D}_{n})=\begin{cases}
*3, & n\equiv_{2}1\\
*0, & n\equiv_{2}0
\end{cases}.
\]
\end{cor}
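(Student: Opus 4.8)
The plan is to read the corollary off the two preceding propositions by a short case analysis on the residue of $n$ modulo $4$, with a moment's care for the smallest dihedral group. Recall that those propositions determine the simplified structure diagrams of $\dng(\mathbb{D}_{4k})$, $\dng(\mathbb{D}_{4k+2})$, and $\dng(\mathbb{D}_{2k+1})$ for $k\ge 1$ by structural induction on the structure classes, yielding $\dng(\mathbb{D}_{2k})=*0$ and $\dng(\mathbb{D}_{2k+1})=*3$; the corollary just repackages this as a single formula in terms of the parity of $n$.

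First I would dispose of the odd case: if $n\ge 2$ is odd then $n\ge 3$, so $n=2k+1$ with $k\ge 1$, and the proposition computing $\dng(\mathbb{D}_{2k+1})$ gives $\dng(\mathbb{D}_n)=*3$, matching the first branch of the claim. Next the even case: if $n\ge 2$ is even then either $n\equiv 0\pmod 4$, in which case $n=4k$ with $k\ge 1$, or $n\equiv 2\pmod 4$, in which case $n=4k+2$ with $k\ge 0$. For $k\ge 1$ the preceding proposition on even dihedral groups gives $\dng(\mathbb{D}_{4k})=\dng(\mathbb{D}_{4k+2})=*0$, hence $\dng(\mathbb{D}_n)=*0$. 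The one residual value is $n=2$: here $\mathbb{D}_2\cong\mathbb{Z}_2\times\mathbb{Z}_2$, which is nontrivial, has even order, and is not isomorphic to $\mathbb{Z}_{2j}$ with $j$ odd, so Corollary~\ref{cor:HararyBarnes} (equivalently Proposition~\ref{prop:Harary}) forces $\dng(\mathbb{D}_2)=*0$, again consistent with the second branch. Assembling these observations gives the stated value of $\dng(\mathbb{D}_n)$ for every $n\ge 2$.

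There is no genuine obstacle here: all of the substance already resides in the two structural propositions, and what remains is the bookkeeping over $n\bmod 4$ together with the degenerate group $\mathbb{D}_2$ (which could instead simply be excluded under the convention that dihedral groups have $n\ge 3$). The only point worth being careful about is that the $\mathbb{D}_{4k+2}$ proposition is stated for $k\ge 1$, which is why $\mathbb{D}_2$ is handled separately via the abelian criterion rather than via that proposition.
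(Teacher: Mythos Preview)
Your proposal is correct and matches the paper's approach: the corollary is stated without proof there, as an immediate consequence of the two preceding propositions on $\dng(\mathbb{D}_{4k})$, $\dng(\mathbb{D}_{4k+2})$, and $\dng(\mathbb{D}_{2k+1})$. Your extra care with the boundary case $n=2$ (handling $\mathbb{D}_2\cong\mathbb{Z}_2\times\mathbb{Z}_2$ via the abelian criterion, since the $\mathbb{D}_{4k+2}$ proposition is only stated for $k\ge1$) is a welcome clarification that the paper glosses over.
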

The outcome of $\gen(\mathbb{D}_{4k})$ can be determined using \cite[Section 3.3]{Barnes}.
We provide information about the structure diagram of the game. 
\begin{prop}
If $k\ge1$, then the simplified structure diagram of $\gen(\mathbb{D}_{4k})$
is the one shown in Figure~\ref{fig:structureDiagsDihedral}(d),
and hence $\gen(\mathbb{D}_{4k})=*0$.
\end{prop}
\begin{proof}
Define the following collection of sets that form a partition of $\mathcal{Y}$:
\[
\begin{aligned}\mathcal{S}_{1} & :=\{X_{I}\in\mathcal{Y}\mid X_{I}\text{ is terminal}\};\\
\mathcal{S}_{2} & :=\{X_{I}\in\mathcal{Y}\mid X_{I}\text{ is semi-terminal}\};\\
\mathcal{S}_{3} & :=\{X_{I}\in\mathcal{Y}\mid X_{I}\text{ is non-terminal}\}.
\end{aligned}
\]
We use structural induction on the structure classes to show that
these sets are nonempty and are the type equivalence classes of the
structure classes, and that
\[
\type(X_{I})=\begin{cases}
(0,0,0), & X_{I}\in\mathcal{S}_{1}\\
(0,1,2), & X_{I}\in\mathcal{S}_{2}\\
(0,0,2), & X_{I}\in\mathcal{S}_{3}
\end{cases}.
\]

First, observe that the Frattini subgroup has even order by Proposition~\ref{prop:FrattiniDihedral}.
This implies that every structure class is even, as well. 

It is clear that $\mathcal{S}_{1}=\{X_{G}\}\neq\emptyset$ and $\type(X_{G})=(0,0,0)$. 

Consider the even maximal subgroup $R=\langle r\rangle$ of $\mathbb{D}_{4k}$.
Then $X_{R}\in\mathcal{S}_{2}$ since $\langle R\cup\{f\}\rangle=\mathbb{D}_{4k}$,
and so $\mathcal{S}_{2}\neq\emptyset$. If $X_{I}\in\mathcal{S}_{2}$,
then $X_{I}$ is semi-terminal, and so $\otype(X_{I})$ is either
$\{(0,0,0)\}$ or $\{(0,0,0),(0,1,2)\}$ by induction. In either case,
$\type(X_{I})=(0,1,2)$, and so $\Otype(X_{I})=\{(0,0,0),(0,1,2)\}$.

For the final case, observe that $\mathcal{S}_{3}\neq\emptyset$ since
the empty position is non-terminal and belongs to $X_{\Phi(\mathbb{D}_{4k})}$.
If $X_{I}\in\mathcal{S}_{3}$, then every option of $X_{I}$ must
be semi-terminal or non-terminal, and so $\otype(X_{I})$ is either
$\{(0,0,2)\}$ or $\{(0,0,2),(0,1,2)\}$ by induction. In either case,
$\type(X_{I})=(0,0,2)$, and hence $\Otype(X_{I})=\{(0,0,2),(0,1,2)\}$.

It follows that the simplified structure diagram of $\gen(\mathbb{D}_{4k})$
is the one shown in Figure~\ref{fig:structureDiagsDihedral}(d),
and so $\gen(\mathbb{D}_{4k})=*0$.
\end{proof}
\begin{prop}
If $k\ge1$, then the simplified structure diagram of $\gen(\mathbb{D}_{2k+1})$
is the one shown in Figure~\ref{fig:structureDiagsDihedral}(e),
and hence $\gen(\mathbb{D}_{2k+1})=*3$.
\end{prop}
\begin{proof}
Let $G:=\mathbb{D}_{2k+1}$. Define the following collection of sets
that form a partition of $\mathcal{Y}$:
\[
\begin{aligned}\mathcal{S}_{1} & :=\{X_{I}\in\mathcal{Y}\mid X_{I}\text{ is terminal}\};\\
\mathcal{S}_{2} & :=\{X_{I}\in\mathcal{Y}\mid X_{I}\text{ is odd and semi-terminal}\};\\
\mathcal{S}_{3} & :=\{X_{I}\in\mathcal{Y}\mid X_{I}\text{ is even and semi-terminal}\};\\
\mathcal{S}_{4} & :=\{X_{I}\in\mathcal{Y}\mid X_{I}\text{ is non-terminal}\}.
\end{aligned}
\]
We use structural induction on the structure classes to show that
these sets are nonempty and are the type equivalence classes of the
structure classes, and that
\[
\type(X_{I})=\begin{cases}
(0,0,0), & X_{I}\in\mathcal{S}_{1}\\
(1,2,1), & X_{I}\in\mathcal{S}_{2}\\
(0,1,2), & X_{I}\in\mathcal{S}_{3}\\
(1,3,0), & X_{I}\in\mathcal{S}_{4}
\end{cases}.
\]

It is clear that $\mathcal{S}_{1}=\{X_{G}\}\neq\emptyset$ and $\type(X_{G})=(0,0,0)$. 

Next, consider the maximal subgroup $R=\langle r\rangle$ of $G$.
The only odd maximal subgroup is $R$ by Proposition~\ref{prop:MaximalDihedral},
and so $\mathcal{S}_{2}=\{X_{R}\}$. Any $T\in\opt(R)$ generates
$G$ since $R$ is a maximal subgroup. So $R$ has no option in a
semi-terminal structure class, and hence $\otype(X_{R})=\{(0,0,0)\}$.
Thus, $\type(X_{R})=(1,2,1)$, and so $\Otype(X_{R})=\{(0,0,0),(1,2,1)\}$.

For the third case, consider the even subgroup $F=\langle f\rangle$.
Then $F$ is a position of an even semi-terminal structure class since
$\langle F\cup\{r\}\rangle=G$, and so $\mathcal{S}_{3}\neq\emptyset$.
Suppose $X_{I}\in\mathcal{S}_{3}$. Since $X_{I}$ is semi-terminal,
$\otype(X_{I})$ is either $\{(0,0,0)\}$ or $\{(0,0,0),(0,1,2)\}$
by induction. In either case, $\type(X_{I})=(0,1,2)$, and so $\Otype(X_{I})=\{(0,0,0),(0,1,2)\}$.

For the final case, note that $\Phi(G)$ is a proper subgroup of $\langle r\rangle$
by Proposition~\ref{prop:FrattiniDihedral}. If $g$ is a rotation
in $G$, then $\Phi(G)\cup\{g\}$ generates a subgroup of $\langle r\rangle$.
If $g$ is a reflection in $G$, then $\Phi(G)\cup\{g\}$ generates
a subgroup $H$ of $G$. It is easy to see that $H$ is isomorphic
to a dihedral group whose order is twice the order of $\Phi(G)$.
Thus $H\ne G$ which means $X_{\Phi(G)}\in\mathcal{S}_{4}$, and so
$\mathcal{S}_{4}\neq\emptyset$. Let $X_{I}\in\mathcal{S}_{4}$. Since
$X_{I}$ is non-terminal, $I$ does not contain $r$ or any reflections.
Then $X_{I}$ must be odd by Cauchy's Theorem because there are no
even order rotations in $G$. Hence $\langle I\cup\{r\}\rangle=\langle r\rangle$
is an option of $X_{I}$ in $\mathcal{S}_{2}$ and $\langle I\cup\{f\}\rangle$
is an option of $X_{I}$ in $\mathcal{S}_{3}$. So $\otype(X_{I})$
is either $\{(1,2,1),(0,1,2)\}$ or $\{(1,2,1),(0,1,2),(1,3,0)\}$
by induction. In either case, $\type(X_{I})=(1,3,0)$, and hence $\Otype(X_{I})=\{(1,2,1),(0,1,2),(1,3,0)\}$.

It follows that the simplified structure diagram of $\gen(G)$ is
the one shown in Figure~\ref{fig:structureDiagsDihedral}(e), and
so $\gen(G)=*3$. 
\end{proof}
\begin{example}
Figure~\ref{fig:DigraphGenD3} shows a representative game digraph
for $\gen(\mathbb{D}_{3})$ and Figure~\ref{fig:structureDiagsDihedral}(e)
shows the simplified structure diagram for $\gen(\mathbb{D}_{3})$.
\end{example}
\begin{figure}
\begin{tabular}{c}
\includegraphics{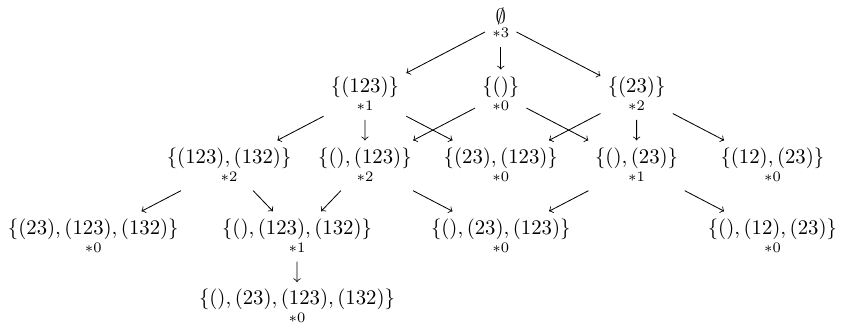}\tabularnewline
\end{tabular}\caption{\label{fig:DigraphGenD3}Representative game digraph of $\protect\gen(\mathbb{D}_{3})$.
We use permutation notation after the identification of $\mathbb{D}_{3}$
with $S_{3}$.}
\end{figure}

\begin{prop}
If $k\ge1$, then the simplified structure diagram of $\gen(\mathbb{D}_{4k+2})$
is the one shown in Figure~\ref{fig:structureDiagsDihedral}(f),
and hence $\gen(\mathbb{D}_{4k+2})=*1$.
\end{prop}
\begin{proof}
Let $G:=\mathbb{D}_{4k+2}$. Define the following collection of sets
that form a partition of $\mathcal{Y}$:
\[
\begin{aligned}\mathcal{S}_{1} & :=\{X_{I}\in\mathcal{Y}\mid X_{I}\text{ is terminal}\};\\
\mathcal{S}_{2} & :=\{X_{I}\in\mathcal{Y}\mid X_{I}\text{ is semi-terminal}\};\\
\mathcal{S}_{3} & :=\{X_{I}\in\mathcal{Y}\mid X_{I}\text{ is even and non-terminal}\};\\
\mathcal{S}_{4} & :=\{X_{I}\in\mathcal{Y}\mid X_{I}\text{ is odd and non-terminal without any even non-terminal option}\};\\
\mathcal{S}_{5} & :=\{X_{I}\in\mathcal{Y}\mid X_{I}\text{ is odd and non-terminal with an even non-terminal option}\}.
\end{aligned}
\]
We use structural induction on the structure classes to show that
these sets are nonempty and are the type equivalence classes of the
structure classes and that
\[
\type(X_{I})=\begin{cases}
(0,0,0), & X_{I}\in\mathcal{S}_{1}\\
(0,1,2), & X_{I}\in\mathcal{S}_{2}\\
(0,0,2), & X_{I}\in\mathcal{S}_{3}\\
(1,1,0), & X_{I}\in\mathcal{S}_{4}\\
(1,1,2), & X_{I}\in\mathcal{S}_{5}
\end{cases}.
\]

As usual, we have $\mathcal{S}_{1}=\{X_{G}\}\ne\emptyset$ and $\type(X_{G})=(0,0,0)$.

Next, consider the even maximal subgroup $R:=\langle r\rangle$. Then
$X_{R}\in\mathcal{S}_{2}$ since $\langle R\cup\{f\}\rangle=G$, and
so $\mathcal{S}_{2}\neq\emptyset$. Suppose $X_{I}\in\mathcal{S}_{2}$.
Then $I=R$ or $I$ contains a reflection, and so $X_{I}$ must be
even. Since $X_{I}$ is semi-terminal, $\otype(X_{I})$ is one of
$\{(0,0,0)\}$ and $\{(0,0,0),(0,1,2)\}$ by induction. In either
case, $\type(X_{I})=(0,1,2)$, and so $\Otype(X_{I})=\{(0,0,0),(0,1,2)\}$.

For the third case, consider the even subgroup $Q=\langle r^{2k+1}\rangle=\{e,r^{2k+1}\}$.
Then $Q\in\mathcal{S}_{3}$ since for all $h\in G$, $\langle Q\cup\{h\}\rangle\neq G$,
and so $\mathcal{S}_{3}\neq\emptyset$. Let $X_{I}\in\mathcal{S}_{3}$.
Then every option of $X_{I}$ is even. Moreover, it must be the case
that $\langle I\cup\{x\}\rangle$ is an option in a structure class
belonging to $\mathcal{S}_{2}$ exactly when $x$ is a reflection\textcolor{red}{{}
}\textcolor{black}{or $x$ is a rotation such that $\langle I\cup\{x\}\rangle$
is the full rotation subgroup.} Otherwise, $\langle I\cup\{x\}\rangle$
is an option in a structure class belonging to $\mathcal{S}_{3}$.
This shows that $\otype(X_{I})$ is either $\{(0,1,2)\}$ or $\{(0,1,2),(0,0,2)\}$
by induction. In either case, $\type(X_{I})=(0,0,2)$, and hence $\Otype(X_{I})=\{(0,1,2),(0,0,2)\}$.

To show that $\mathcal{S}_{4}\neq\emptyset$, consider the odd subgroup
$P=\langle r^{2}\rangle$ and let $x$ be any even order element of
$G$. Then either $x$ is a reflection or equal to $r^{m}$, where
$m$ is odd. In either case, $\langle P\cup\{x\}\rangle$ is maximal,
and hence $\langle P\cup\{x\}\rangle$ must be an element of a semi-terminal
structure class in $\mathcal{S}_{2}$. Hence $P\in\mathcal{S}_{4}$.
Now, let $X_{I}\in\mathcal{S}_{4}$, so that $X_{I}$ is an odd non-terminal
structure class without any even non-terminal options. We see that
$\langle I\cup\{r\}\rangle$ contains the maximal subgroup $\langle r\rangle$,
which has even order. So $X_{I}$ has an option in $\mathcal{S}_{2}$.
Next, we show that $X_{I}$ has no option in $\mathcal{S}_{5}$. For
a contradiction, suppose $X_{I}$ has an option $X_{J}$ in $\mathcal{S}_{5}$.
By the definition of $\mathcal{S}_{5}$, $X_{J}$ has an option $X_{H}$
in $\mathcal{S}_{3}$. Let $t$ be an element of order 2 in $H$ and
let $I\cup\{t\}\in X_{K}$. Then $K$ is even and $K\leq H\leq G$.
So $X_{K}$ is an option of $X_{I}$ in $\mathcal{S}_{3}$, which
is a contradiction. This shows that $\otype(X_{I})$ is either $\{(0,1,2)\}$
or $\{(0,1,2),(1,1,0)\}$ by induction. In either case, $\otype(X_{I})=\{(1,1,0)\}$,
and hence $\Otype(X_{I})=\{(0,1,2),(1,1,0)\}$.

For the final case, consider the non-terminal structure class $X_{\Phi(G)}$,
which contains the empty position $\emptyset$ and is odd by Proposition~\ref{prop:FrattiniDihedral}.
We see that $\langle\emptyset\cup\{r^{2k+1}\}\rangle=\{e,r^{2k+1}\}$
is an option in a structure class belonging to $\mathcal{S}_{3}$.
This shows that $X_{\Phi(G)}\in\mathcal{S}_{5}$, and so $\mathcal{S}_{5}\ne\emptyset$.
Suppose $X_{I}\in\mathcal{S}_{5}$. Since $I$ is of odd order, it
must be a subgroup of $\langle r^{2}\rangle$. However, since $\langle r^{2}\rangle$
is an element of a structure class belonging to $\mathcal{S}_{4}$,
$I$ must be a proper subgroup of $\langle r^{2}\rangle$. We see
that $\langle I\cup\{r\}\rangle=\langle r\rangle$ and $\langle I\cup\{r^{2}\}\rangle=\langle r^{2}\rangle$,
which are elements of structure classes belonging to $\mathcal{S}_{2}$
and $\mathcal{S}_{4}$, respectively. As a consequence, $\otype(X_{I})$
is either $\{(0,0,2),(1,1,0),(0,1,2)\}$ or $\{(0,0,2),(1,1,0),(0,1,2),(1,1,2)\}$
by induction. In either case, $\type(X_{I})=(1,1,2)$, and hence $\Otype(X_{I})=\{(0,0,2),(1,1,0),(0,1,2),(1,1,2)\}$.

It follows that the simplified structure diagram of $\gen(G)$ is
the one shown in Figure~\ref{fig:structureDiagsDihedral}(f), which
implies that $\gen(G)=*1$.  
\end{proof}
\begin{cor}
For $n\ge3$, we have
\[
\gen(\mathbb{D}_{n})=\begin{cases}
*3, & n\equiv_{2}1\\
*0, & n\equiv_{4}0\\
*1, & n\equiv_{4}2
\end{cases}.
\]
\end{cor}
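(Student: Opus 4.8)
The plan is to obtain the corollary as a direct consequence of the three immediately preceding propositions by splitting on the residue of $n$ modulo $4$; the only point requiring any care is to confirm that the hypothesis $k\ge 1$ of each of those propositions is automatically satisfied when $n\ge 3$.

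Concretely, I would argue as follows. If $n\ge 3$ is odd, write $n=2k+1$ with $k\ge 1$; the proposition computing $\gen(\mathbb{D}_{2k+1})$ then gives $\gen(\mathbb{D}_n)=*3$. If $n\equiv_{4}0$, write $n=4k$ with $k\ge 1$; the proposition computing $\gen(\mathbb{D}_{4k})$ gives $\gen(\mathbb{D}_n)=*0$. Finally, if $n\equiv_{4}2$ and $n\ge 3$, then in fact $n\ge 6$, so $n=4k+2$ with $k\ge 1$, and the proposition computing $\gen(\mathbb{D}_{4k+2})$ gives $\gen(\mathbb{D}_n)=*1$. Since every integer $n\ge 3$ lies in exactly one of these three residue classes, the piecewise formula follows, and one reads off each nim-number from the trapezoid of the source vertex of the corresponding simplified structure diagram in Figure~\ref{fig:structureDiagsDihedral}(d)--(f).

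There is essentially no obstacle: all of the combinatorial content has already been discharged in establishing the shapes of those three simplified structure diagrams via structural induction on structure classes. The only remark worth including is why the range $n\ge 3$ is the natural one here: for $n=1,2$ the group $\mathbb{D}_n$ is abelian ($\mathbb{D}_1\cong\mathbb{Z}_2$ and $\mathbb{D}_2\cong\mathbb{Z}_2\times\mathbb{Z}_2$) and is handled by the cyclic and abelian analyses rather than the dihedral one, and restricting to $n\ge 3$ is precisely what forces $k\ge 1$ in each of the three cases above, so no separate small-case bookkeeping is needed.
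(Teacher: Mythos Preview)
Your proposal is correct and matches the paper's approach exactly: the paper states this corollary without proof, treating it as an immediate consequence of the three preceding propositions, and your case split on $n\bmod 4$ together with the check that $k\ge 1$ in each case is precisely the intended (trivial) argument.
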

The simplified structure diagrams for $\gen(\mathbb{D}_{n})$ are
shown in Figure~\ref{fig:structureDiagsDihedral}.

\section{Abelian groups\label{sec:Abelian}}

In this section, we study the avoidance game $\dng(G)$ and the achievement
game $\gen(G)$ for a finite abelian group $G$. The following result
is from \cite[Corollary on page 141]{Suzuki}.
\begin{prop}
\label{prop:SubgroupRelPrime}If $G$ and $H$ are finite groups of
relatively prime orders, then any subgroup of $G\times H$ is of the
form $K\times L$ for some subgroups $K\le G$ and $L\le H$.
\end{prop}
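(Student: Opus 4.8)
The plan is to use the two coordinate projections together with the coprimality of $|G|$ and $|H|$ to show that a subgroup of $G\times H$ is completely determined by its images under these projections. Let $S\le G\times H$, and let $\pi_{G}\colon G\times H\to G$ and $\pi_{H}\colon G\times H\to H$ be the projection homomorphisms. Set $K:=\pi_{G}(S)$ and $L:=\pi_{H}(S)$; these are subgroups of $G$ and $H$ respectively, being homomorphic images of the subgroup $S$, and clearly $S\subseteq K\times L$. So it remains to establish the reverse inclusion $K\times L\subseteq S$.

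The key step is to show that $\{e\}\times L\subseteq S$ and, symmetrically, $K\times\{e\}\subseteq S$. For the first, fix $\ell\in L$ and choose $g\in K$ with $(g,\ell)\in S$. Since $\gcd(|G|,|H|)=1$, there are integers $a,b$ with $a|G|+b|H|=1$. Then $(g,\ell)^{a|G|}=(g^{a|G|},\ell^{a|G|})=(e,\ell^{a|G|})$, because $g^{|G|}=e$ by Lagrange's theorem, and $\ell^{a|G|}=\ell^{1-b|H|}=\ell$ because $\ell^{|H|}=e$. Hence $(e,\ell)\in S$, and since $\ell\in L$ was arbitrary we get $\{e\}\times L\subseteq S$. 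The argument for $K\times\{e\}\subseteq S$ is identical with the roles of $G$ and $H$ interchanged.

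Finally, for any $(g,h)\in K\times L$ we have $(g,h)=(g,e)(e,h)\in S$, since both factors lie in $S$ and $S$ is closed under the group operation; thus $K\times L\subseteq S$. Combined with $S\subseteq K\times L$, this gives $S=K\times L$, which is of the desired form.

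I do not anticipate a genuine obstacle here; the only point requiring care is the elementary number-theoretic manipulation that kills the $G$-coordinate of $(g,\ell)$ while preserving its $H$-coordinate. This is exactly where the coprimality hypothesis enters, and it is essential: without it the statement fails, as the diagonal subgroup of $\mathbb{Z}_{2}\times\mathbb{Z}_{2}$ shows.
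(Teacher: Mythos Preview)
Your proof is correct and complete; the Bezout trick to annihilate one coordinate while fixing the other is the standard argument and your execution is clean. Note, however, that the paper does not actually prove this proposition: it is quoted as a known result from Suzuki's textbook (the paper writes ``The following result is from~\cite[Corollary on page 141]{Suzuki}'') and no proof is given. So there is nothing in the paper to compare your argument against; you have simply supplied a self-contained proof where the paper chose to cite.
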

The next result completely characterizes the nim-numbers for $\dng(G)$
for finite abelian groups $G$.
\begin{prop}
If $G$ is a finite abelian group, then
\[
\dng(G)=\begin{cases}
*1, & G\text{ is nontrivial of odd order}\\
*1, & G\cong\mathbb{Z}_{2}\\
*3, & G\cong\mathbb{Z}_{2}\times\mathbb{Z}_{2k+1}\text{ with }k\ge1\\
*0, & \text{else}
\end{cases}.
\]
\end{prop}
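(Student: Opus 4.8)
The plan is to reduce the classification to cases we have already essentially handled, using the structure theorem for finite abelian groups together with the Frattini product formula (Proposition~\ref{prop:FrattiniProduct}) and Proposition~\ref{prop:SubgroupRelPrime}. Write $G \cong A \times B$ where $|A|$ is a power of $2$ (possibly trivial) and $|B|$ is odd. The odd-order case ($A$ trivial, $B$ nontrivial) is immediate from Proposition~\ref{prop:DNGodd}, and the trivial-group case is excluded from $\dng$ entirely, so we may assume $|A| \ge 2$. The cases $G \cong \mathbb{Z}_2$ and $G \cong \mathbb{Z}_2 \times \mathbb{Z}_{2k+1}$ are covered by the corollary to Corollary~\ref{cor:HararyBarnes}: indeed $\dng(G) \ne *0$ precisely when $G$ has odd order or $G \cong \mathbb{Z}_{2k}$ with $k$ odd, which by the structure theorem means $G \cong \mathbb{Z}_2 \times H$ with $H$ of odd order (including $H$ trivial, i.e. $G\cong\mathbb{Z}_2$). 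By Corollary~\ref{cor:nim-value-dng-013} such a game has nim-number $1$ or $3$, and we must separate these two subcases and confirm the remaining ``else'' groups give $*0$.

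For the $*0$ cases: every abelian group not of the above forms either has $|A| \equiv_4 0$, i.e. a cyclic factor $\mathbb{Z}_{2^j}$ with $j \ge 2$ or at least two even cyclic factors, so that $4 \mid |G|$. I claim the $2$-part of the Frattini subgroup is then nontrivial: by Proposition~\ref{prop:FrattiniProduct}, $\Phi(G) = \Phi(A) \times \Phi(B)$, and for $|A|$ a power of $2$ with $|A|\ge 4$ one has $\Phi(A)$ nontrivial (each $\mathbb{Z}_{2^j}$ with $j\ge 2$ contributes $\Phi(\mathbb{Z}_{2^j}) = \mathbb{Z}_{2^{j-1}}$, and $\Phi(\mathbb{Z}_2 \times \mathbb{Z}_2) = \mathbb{Z}_2\times\mathbb Z_2$ is nontrivial — the only case with $\Phi(A)$ trivial is $A \cong \mathbb{Z}_2$). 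In particular $\Phi(G)$ has even order, hence every intersection subgroup, containing $\Phi(G)$, has even order, so every structure class is even. A structural induction on $\mathcal{X}$ — exactly as in the proof of Proposition~\ref{prop:DNGZ4k} — then shows $\type(X_I) = (0,0,1)$ and $\Otype(X_I) = \{(0,0,1)\}$ for every $X_I$; reading off the second coordinate of $\type(X_{\Phi(G)})$ via Corollary~\ref{cor:nim-value-dng-013} gives $\dng(G) = *0$.

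It remains to show $\dng(G) = *1$ rather than $*3$ when $G \cong \mathbb{Z}_2$ or $G\cong \mathbb{Z}_2 \times \mathbb{Z}_{2k+1}$. For $G \cong \mathbb{Z}_2$ the game has a single empty position and its sole option is terminal, giving $\dng(\mathbb{Z}_2) = *1$ directly. For $G \cong \mathbb{Z}_2 \times H$ with $H$ nontrivial odd: by Proposition~\ref{prop:SubgroupRelPrime} every subgroup, hence every maximal subgroup and every intersection subgroup, splits as $K \times L$ with $K \le \mathbb{Z}_2$, $L \le H$; the maximal subgroups are $\{0\}\times H$ (odd order) and $\mathbb{Z}_2 \times M$ for $M$ maximal in $H$ (even order), and $\Phi(G) = \{0\} \times \Phi(H)$ is odd. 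This is structurally the same situation as $\dng(\mathbb{Z}_{4k+2})$ in Proposition~\ref{prop:DNGZ4k+2} — one odd maximal subgroup and the rest even — and I would run the identical three-part partition argument ($\mathcal S_1$: the structure class of the unique odd maximal subgroup, with type $(1,1,0)$; $\mathcal S_2$: even structure classes, type $(0,0,1)$; $\mathcal S_3$: odd structure classes below the odd maximal subgroup, type $(1,3,2)$), using Proposition~\ref{prop:genCompat}-style option analysis to verify each non-terminal class reaches both an $\mathcal S_1$ and an $\mathcal S_2$ option. Then $\type(X_{\Phi(G)}) = (1,3,2)$ and $\dng(G) = *3$. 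The main obstacle is this last case: one must check that from a small odd intersection subgroup $I = \{0\}\times L$ one can always both (i) reach the odd maximal subgroup $\{0\}\times H$ by adjoining a suitable odd-order element — which works because $H$ is cyclic-by-nothing only if $H$ is generated over $L$, so one may need $H$ not itself cyclic here and instead argue that $\langle I \cup \{g\}\rangle$ for appropriate $g$ of odd order lands strictly between $I$ and $\{0\}\times H$, still odd — and (ii) reach an even structure class by adjoining an element of even order; (ii) is easy since any element of the form $(1,0)$ works, but (i) requires care when $H$ is not cyclic, and one should instead only claim the weaker statement that $X_I \in \mathcal S_3$ has some odd option and some even option, which suffices to pin down $\type(X_I) = (1,3,2)$ by the $\otype$ table in the proof of Proposition~\ref{prop:DNGZ4k+2}.
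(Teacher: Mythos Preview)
Your treatment of the $*0$ case contains a genuine error. You claim that any even abelian group in the ``else'' case has its $2$-part $A$ of order divisible by $4$, but this fails: for instance $G=\mathbb{Z}_2\times\mathbb{Z}_3\times\mathbb{Z}_3$ is in the ``else'' case (it is even and not isomorphic to any $\mathbb{Z}_2\times\mathbb{Z}_{2k+1}$) yet has $A\cong\mathbb{Z}_2$. Even granting $|A|\ge 4$, your assertion that $\Phi(A)$ is then nontrivial is also false: $\Phi(\mathbb{Z}_2\times\mathbb{Z}_2)$ is the intersection of the three order-$2$ subgroups, which is trivial (or use Proposition~\ref{prop:FrattiniProduct} directly: $\Phi(\mathbb{Z}_2)\times\Phi(\mathbb{Z}_2)$ is trivial). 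More generally $\Phi(\mathbb{Z}_2^n)$ is trivial for every $n$, so the ``every intersection subgroup is even'' argument borrowed from Proposition~\ref{prop:DNGZ4k} cannot cover groups with elementary-abelian $2$-part. The paper sidesteps this entirely: Corollary~\ref{cor:HararyBarnes} (for abelian $G$) already says $\dng(G)=*0$ in every case outside odd order and $\mathbb{Z}_{2k}$ with $k$ odd, so no Frattini analysis is needed.

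For the $*3$ case you are working far too hard and getting tangled. In the statement $G\cong\mathbb{Z}_2\times\mathbb{Z}_{2k+1}$ with $k\ge 1$, so $H=\mathbb{Z}_{2k+1}$ is cyclic; by the Chinese Remainder Theorem $G\cong\mathbb{Z}_{4k+2}$, and Corollary~\ref{cor:DNGZn} gives $\dng(\mathbb{Z}_{4k+2})=*3$ directly. Your discussion of ``$H$ not itself cyclic'' is addressing a case that does not arise, and the worries about reaching the odd maximal subgroup are unnecessary once you observe $G$ is literally the cyclic group already analyzed.
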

\begin{proof}
The case when $G$ is nontrivial of odd order is proved in Proposition~\ref{prop:DNGodd}.
Corollary~\ref{cor:DNGZn} covers both the $G\cong\mathbb{Z}_{2}$
and $G\cong\mathbb{Z}_{2}\times\mathbb{Z}_{2k+1}$ cases. In every
other case, $\dng(G)=*0$ since the second player has a winning strategy
as was shown in \cite[Section 3]{anderson.harary:achievement} and
in \cite[Section 2.1]{Barnes} (see Proposition~\ref{prop:Harary}).
\end{proof}
The remainder of this section tackles $\gen(G)$ for finite abelian
groups $G$. Our analysis involving groups with non-zero nim-numbers
handles three cases, which are addressed in Propositions~\ref{prop:abelianGEN-oddPrimes},
\ref{prop:abelianGEN-mkRelPrime}, and \ref{prop:abelianGEN-mkNotRelPrime}. 

Recall that every finite abelian group $G$ has an invariant factor
decomposition $G\cong\mathbb{Z}_{\alpha_{1}}\times\cdots\times\mathbb{Z}_{\alpha_{k}}$
where the positive non-unit elementary divisors $\alpha_{1},\ldots,\alpha_{k}$
are uniquely determined by $G$ and satisfy $\alpha_{i}\mid\alpha_{i+1}$
for $i\in\{1,\ldots,k-1\}$. Our proof of Proposition~\ref{prop:abelianGEN-oddPrimes}
requires the following notion and the lemmas that follow.
\begin{defn}
We define the \emph{spread} $\spr(G)$ of the finite abelian group
$G$ to be the number of elementary divisors in the invariant factor
decomposition of $G$. 
\end{defn}
Note that the trivial group has spread $\spr(\mathbb{Z}_{1})=0$.
If $G=\mathbb{Z}_{p_{1}^{r_{1}}}\times\cdots\times\mathbb{Z}_{p_{k}^{r_{k}}}$
is an abelian group with primes $p_{1},\ldots,p_{k}$, then $\spr(G)=\max\{n_{i}\mid i\in\{1,\ldots,k\}\}$
where $n_{i}:=|\{j\mid p_{i}=p_{j}\}|$. In this case, $G$ is isomorphic
to the direct product of $\spr(G)$-many cyclic groups, but it is
not isomorphic to the direct product of fewer cyclic groups. It follows
that the spread of $G$ is the minimum size of a generating set of
$G$.
\begin{example}
Let $G=\mathbb{Z}_{3}\times\mathbb{Z}_{9}\times\mathbb{Z}_{5}\times\mathbb{Z}_{49}\times\mathbb{Z}_{7}$.
Then $\spr(G)=2$. If $g=(1,1,0,1,0)$, then $G/\langle g\rangle\cong\mathbb{Z}_{3}\times\mathbb{Z}_{5}\times\mathbb{Z}_{7}$,
and so $\spr(G/\langle g\rangle)=1=\spr(G)-1$. If $h=(0,3,1,1,0)$,
then $G/\langle h\rangle\cong\mathbb{Z}_{3}\times\mathbb{Z}_{3}\times\mathbb{Z}_{7}$,
and so $\spr(G/\langle h\rangle)=2=\spr(G)$. If $k=(1,3,1,0,1)$,
then $G/\langle k\rangle\cong\mathbb{Z}_{9}\times\mathbb{Z}_{49}$,
and so $\spr(G/\langle k\rangle)=1=\spr(G)-1$.
\end{example}
The following is an easy consequence of the definitions.
\begin{lem}
\label{lem:lnf}Let $G=\mathbb{Z}_{p_{1}^{r_{1}}}\times\cdots\times\mathbb{Z}_{p_{k}^{r_{k}}}$
be an abelian group with primes $p_{1},\ldots,p_{k}$. If $g=(g_{1},\ldots,g_{k})$
with
\[
g_{i}:=\begin{cases}
1, & i=\min\{j\mid p_{i}=p_{j}\}\\
0, & \text{else}
\end{cases},
\]
then $\spr(G/\langle g\rangle)=\spr(G)-1$.
\end{lem}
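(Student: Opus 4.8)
The plan is to reduce the computation to the prime-by-prime (Sylow) decomposition of $G$ and of the cyclic subgroup $\langle g\rangle$, and then to read off spreads from the counting description $\spr(G)=\max_i n_i$ recorded immediately before the lemma. Throughout I assume $G$ is nontrivial, which is the only case in which the asserted identity makes sense and the only case in which the lemma is applied.

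First I would write $G=\prod_{p}G_{p}$ as the internal direct product of its Sylow subgroups, where $G_{p}$ is the product of those factors $\mathbb{Z}_{p_{j}^{r_{j}}}$ with $p_{j}=p$; let $m_{p}$ denote the number of such factors, so that $\spr(G)=\max_{p}m_{p}$. Decomposing $g=\sum_{p}g_{p}$ with $g_{p}\in G_{p}$, the factors $G_{p}$ have pairwise coprime orders, so repeated application of Proposition~\ref{prop:SubgroupRelPrime} gives $\langle g\rangle=\prod_{p}\langle g_{p}\rangle$, and therefore $G/\langle g\rangle\cong\prod_{p}(G_{p}/\langle g_{p}\rangle)$. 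Since each quotient $G_{p}/\langle g_{p}\rangle$ is a $p$-group and distinct primes contribute independently to the formula for $\spr$, this yields $\spr(G/\langle g\rangle)=\max_{p}\spr(G_{p}/\langle g_{p}\rangle)$.

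It remains to compute $\spr(G_{p}/\langle g_{p}\rangle)$ for each fixed prime $p$. By the definition of $g$, its component $g_{p}$ has entry $1$ in the factor indexed by $i_{p}:=\min\{j\mid p_{j}=p\}$ and entry $0$ in every other factor of $G_{p}$; hence $\langle g_{p}\rangle$ is precisely that entire cyclic factor $\mathbb{Z}_{p^{r_{i_{p}}}}\times\{0\}\times\cdots\times\{0\}$, and $G_{p}/\langle g_{p}\rangle$ is isomorphic to the product of the remaining $m_{p}-1$ cyclic factors. Thus $\spr(G_{p}/\langle g_{p}\rangle)=m_{p}-1$ for every $p$, including the case $m_{p}=1$ where the quotient is trivial, and combining with the previous step gives
\[
\spr(G/\langle g\rangle)=\max_{p}(m_{p}-1)=\Bigl(\max_{p}m_{p}\Bigr)-1=\spr(G)-1 .
\]
The argument is entirely routine; the only place demanding a moment's care is checking in this last step that $g_{p}$ generates the \emph{whole} cyclic factor at index $i_{p}$ — so that passing to the quotient genuinely deletes a factor rather than merely shrinking one — and that the index picked out by the $\min$ in the definition of $g$ does lie in the $p$-part, both of which follow immediately from unwinding the definitions.
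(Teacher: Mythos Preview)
Your argument is correct. The paper itself gives no proof beyond the remark ``The following is an easy consequence of the definitions,'' and your Sylow-by-Sylow reduction does exactly what that remark invites: identify $\langle g\rangle$ as the direct product of one full cyclic factor from each $G_p$, quotient it out, and read off the spread from the formula $\spr(G)=\max_p m_p$ stated just before the lemma. There is nothing substantively different to compare; you have simply supplied the omitted routine details, and the only mild overkill is invoking Proposition~\ref{prop:SubgroupRelPrime} where the Chinese Remainder Theorem for the cyclic group $\langle g\rangle$ would already give $\langle g\rangle=\prod_p\langle g_p\rangle$ directly.
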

\begin{defn}
If $A$ is an $m\times n$ integer matrix, then we define $\ag(A)$
to be the abelian group with generators $g_{1},\ldots,g_{n}$ satisfying
the relations
\[
A\left[\begin{matrix}g_{1}\\
\vdots\\
g_{n}
\end{matrix}\right]=\left[\begin{matrix}0\\
\vdots\\
0
\end{matrix}\right].
\]

It is well-known that $\ag(A)\cong\ag(\snf(A))$ where $\snf(A)$
is the Smith Normal Form of $A$. The diagonal entries $\alpha_{1},\ldots,\alpha_{k}$
of $\snf(A)$ can be calculated as $\alpha_{1}=d_{1}(A)$ and $\alpha_{i}=d_{i}(A)/d_{i-1}(A)$
for $i\in\{2,\ldots,k\}$, where $d_{i}(A)$ is the greatest common
divisor of all $i\times i$ minors of $A$ \cite[Theorem 3.9]{Jacobson}.
Elementary integer row and column operations can be used on $A$ to
get $\snf(A)$. The non-unit diagonal elements of $\snf(A)$ are the
invariant factors of $\ag(A)$ and so $\spr(\ag(A))$ is the number
of non-unit elements of $\{\alpha_{1},\ldots,\alpha_{k}\}$.
\end{defn}
The \emph{truncated Smith Normal Form} $\tsnf(A)$ of $A$ is created
from $\snf(A)$ by removing every zero row and every column containing
a single nonzero entry equal to 1. We clearly have $\ag(A)\cong\ag(\tsnf(A))$.
It is also clear that $\spr(\ag(A))$ is equal to the size of $\tsnf(A)$. 
\begin{example}
Let $G=\mathbb{Z}_{3}\times\mathbb{Z}_{15}\cong\ag\left(\left[\begin{smallmatrix}3 & 0\\
0 & 15
\end{smallmatrix}\right]\right)$. Then $\spr(G)=2$. Now, let $g=(1,2)\in G$ and $A=\left[\begin{smallmatrix}3 & 0\\
0 & 15\\
1 & 2
\end{smallmatrix}\right]$. Then $\snf(A)=\left[\begin{smallmatrix}1 & 0\\
0 & 3\\
0 & 0
\end{smallmatrix}\right]$ and $\tsnf(A)=\left[\begin{smallmatrix}3\end{smallmatrix}\right]$,
and so $G/\langle g\rangle\cong\ag(A)\cong\ag(\snf(A))\cong\mathbb{Z}_{3}$.
Hence $\spr(G/\langle g\rangle)=1=\spr(G)-1$.
\end{example}

\begin{defn}
Let $\mathbb{Z}_{r_{1}}\times\cdots\times\mathbb{Z}_{r_{k}}$ be the
invariant factor decomposition of $G$ and $e_{i}:=(0,\ldots,0,1,0,\ldots,0)$
for $i\in\{1,\ldots,k\}$, where $1$ occurs in the $i$th component.
For $g\in G$ we let $\hat{g}:=\left[\begin{matrix}\hat{g}_{1} & \cdots & \hat{g}_{k}\end{matrix}\right]\in\mathbb{Z}^{1\times k}$
such that $\hat{g_{i}}$ is the smallest nonnegative integer satisfying
\[
g=\sum\hat{g_{i}}e_{i}.
\]
\end{defn}
\begin{lem}
\label{lem:spr-1}If $g$ is an element of the abelian group $G$,
then $\spr(G/\langle g\rangle)\ge\spr(G)-1$. 
\end{lem}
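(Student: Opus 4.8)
The plan is to reduce the statement to the elementary fact, recorded immediately before the lemma, that $\spr(H)$ equals the minimum size of a generating set of a finite abelian group $H$. Write $N:=\langle g\rangle$ and set $n:=\spr(G/N)$, so there are elements $y_{1},\dots,y_{n}\in G$ whose cosets $y_{1}N,\dots,y_{n}N$ generate $G/N$. First I would show that $\langle y_{1},\dots,y_{n},g\rangle=G$: the image of this subgroup under the quotient map $\pi\colon G\to G/N$ contains $\pi(y_{1}),\dots,\pi(y_{n})$ and hence is all of $G/N$, while the subgroup itself contains $g$ and therefore all of $N$; so by the correspondence theorem it must equal $\pi^{-1}(G/N)=G$. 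Consequently $G$ has a generating set of size $n+1$, which gives $\spr(G)\le n+1=\spr(G/N)+1$, i.e.\ $\spr(G/\langle g\rangle)\ge\spr(G)-1$.

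In this route there is essentially no obstacle; the only point requiring care is the routine observation that a subgroup of $G$ containing $N$ is recovered from its image in $G/N$, which is exactly the correspondence theorem. I expect the bulk of a written-out proof to be this single paragraph.

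Alternatively, and more in keeping with the $\hat g$/Smith-normal-form machinery just introduced, one can argue matricially. Let $\mathbb{Z}_{r_{1}}\times\cdots\times\mathbb{Z}_{r_{k}}$ be the invariant factor decomposition of $G$, so that $k=\spr(G)$, and let $A$ be the $(k+1)\times k$ integer matrix obtained by appending the row $\hat g=[\hat g_{1}\ \cdots\ \hat g_{k}]$ to $\diag(r_{1},\dots,r_{k})$; then $G/\langle g\rangle\cong\ag(A)$ and $\spr(G/\langle g\rangle)$ is the number of non-unit diagonal entries $\alpha_{1}\mid\alpha_{2}\mid\cdots$ of $\snf(A)$. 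Since these divide one another, the units among them form an initial segment, so it suffices to show at most one is a unit, equivalently (when $k\ge2$) that $d_{2}(A)=\alpha_{1}\alpha_{2}>1$. This is immediate: picking a prime $p$ dividing $r_{1}$ — one exists since $\spr(G)=k\ge2$ forces $r_{1}$ to be a non-unit — the chain $r_{1}\mid r_{i}$ gives $p\mid r_{i}$ for every $i$, so $p$ divides every entry of the diagonal block of $A$ and hence every $2\times2$ minor of $A$, each of which is of the form $\pm r_{i}r_{j}$ or $\pm r_{i}\hat g_{j}$. The cases $\spr(G)\le1$ are trivial because the spread is always nonnegative. Here the only thing that might look like an obstacle is recognizing that $d_{2}(A)$ is the right quantity to bound and that the divisibility chain makes controlling it enough.
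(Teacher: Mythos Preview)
Your second argument is essentially the paper's own proof: append $\hat g$ to $\diag(r_1,\dots,r_k)$, and observe that every $2\times2$ minor of the resulting matrix is divisible by $r_1>1$, so $\alpha_2=d_2/d_1$ cannot be a unit and at most one invariant factor of $G/\langle g\rangle$ is trivial.

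Your first argument is correct and genuinely different. The paper records, just before the lemma, that $\spr(H)$ equals the minimum size of a generating set of $H$, but then does not use this for the lemma; you do. Lifting $n=\spr(G/N)$ generators of $G/N$ and adjoining $g$ gives a generating set of $G$ of size $n+1$, hence $\spr(G)\le n+1$. This route is shorter, avoids the Smith normal form computation entirely, and in fact works for any finite group $G$ (indeed any finitely generated group) once ``spread'' is read as minimal number of generators. What the paper's matricial approach buys is uniformity with the proof of the next proposition (Proposition~\ref{prop:spr}), where one mods out by an arbitrary subgroup $H$ rather than a cyclic one and the Smith-normal-form bookkeeping is reused verbatim; your generating-set argument extends there too (lift generators of $G/\langle H\cup\{g\}\rangle$ and adjoin $g$ to get generators of $G/H$), so either method suffices for both results.
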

\begin{proof}
Let $\mathbb{Z}_{r_{1}}\times\cdots\times\mathbb{Z}_{r_{k}}$ be the
invariant factor decomposition of $G$. Note that $1\ne r_{1}\mid r_{2}\mid\cdots\mid r_{k}$.
Then $G\cong\ag(A)$ with $A=\diag(r_{1},\ldots,r_{k})$ and $G/\langle g\rangle\cong\ag(B)\cong\ag(\snf(B))$
where
\[
B=\left[\begin{array}{c}
A\\
\hline \hat{g}
\end{array}\right]=\left[\begin{array}{ccc}
r_{1} &  & 0\\
 & \ddots\\
0 &  & r_{k}\\
\hat{g}_{1} & \cdots & \hat{g}_{k}
\end{array}\right].
\]
Let $\alpha_{1},\ldots,\alpha_{k}$ be the diagonal elements of $\snf(B)$.
If $\spr(G/\langle g\rangle)\le\spr(G)-2=k-2$, then $d_{1}(B)=\alpha_{1}=\alpha_{2}=1$.
This is impossible since it is easy to see that every $2\times2$
minor of $B$ is divisible by $r_{1}$ and so $\alpha_{2}=d_{2}(B)/d_{1}(B)=d_{2}(B)$
is divisible by $r_{1}$.
\end{proof}
\begin{prop}
\label{prop:spr}If $H$ is a subgroup and $g$ is an element of the
finite abelian group $G$, then
\[
\spr(G/\langle H\cup\{g\}\rangle)\ge\spr(G/H)-1.
\]
 
\end{prop}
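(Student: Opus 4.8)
The plan is to deduce this immediately from Lemma~\ref{lem:spr-1} by passing to the quotient group $\bar G := G/H$. Write $\bar g := gH\in\bar G$ for the image of $g$ under the canonical projection $\pi\colon G\to\bar G$. Since $H$ is a subgroup, $\langle H\cup\{g\}\rangle$ contains $H = \ker\pi$, so quotienting by it makes sense, and the first step is to identify this subgroup via the correspondence theorem. Concretely, $\pi(\langle H\cup\{g\}\rangle)$ is generated by $\pi(H\cup\{g\}) = \{\bar e\}\cup\{\bar g\}$ and hence equals $\langle\bar g\rangle$; and because $\langle H\cup\{g\}\rangle\supseteq\ker\pi$, this subgroup is the full preimage $\pi^{-1}(\langle\bar g\rangle)$. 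In other words, generating and then projecting coincides with projecting and then generating.

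The second step is the Third Isomorphism Theorem: from $\langle H\cup\{g\}\rangle = \pi^{-1}(\langle\bar g\rangle)$ we obtain $G/\langle H\cup\{g\}\rangle \cong (G/H)/\langle\bar g\rangle = \bar G/\langle\bar g\rangle$. Since $\spr$ depends only on the invariant factor decomposition and is therefore an isomorphism invariant, this yields $\spr(G/\langle H\cup\{g\}\rangle) = \spr(\bar G/\langle\bar g\rangle)$. Finally, applying Lemma~\ref{lem:spr-1} to the finite abelian group $\bar G$ and the element $\bar g\in\bar G$ gives $\spr(\bar G/\langle\bar g\rangle)\ge\spr(\bar G)-1 = \spr(G/H)-1$, and chaining the three steps produces the claimed inequality.

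I do not expect a genuine obstacle here: the entire content of the proposition is the reduction of the case of adjoining a subgroup together with one element to the single-element case already settled in Lemma~\ref{lem:spr-1}. The only point requiring any care is the routine bookkeeping in the first step, namely that $\langle H\cup\{g\}\rangle$ is precisely the preimage under $\pi$ of $\langle\bar g\rangle$ (which uses that a subgroup containing the kernel equals the preimage of its image), after which the Third Isomorphism Theorem and isomorphism-invariance of $\spr$ do the rest.
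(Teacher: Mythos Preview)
Your proof is correct. Both your argument and the paper's reduce the statement to Lemma~\ref{lem:spr-1}, but you carry out the reduction more cleanly: you pass to the quotient $\bar G=G/H$, identify $\langle H\cup\{g\}\rangle$ with the preimage of $\langle\bar g\rangle$ via the correspondence theorem, and then invoke the Third Isomorphism Theorem to obtain $G/\langle H\cup\{g\}\rangle\cong\bar G/\langle\bar g\rangle$, so that Lemma~\ref{lem:spr-1} applies directly.

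The paper instead stays at the level of Smith Normal Form matrices: it represents $G/H$ by the matrix $B$ obtained from $\diag(r_1,\dots,r_k)$ by appending the rows $\hat h_i$, appends one more row $\hat g$ for the quotient by $\langle H\cup\{g\}\rangle$, and then uses elementary integer row and column operations to reduce to the matrix $\left[\begin{smallmatrix}\tsnf(B)\\ \tilde g\end{smallmatrix}\right]$, at which point the computation from the proof of Lemma~\ref{lem:spr-1} finishes the job. Your approach avoids this matrix bookkeeping and makes transparent that the proposition is nothing more than Lemma~\ref{lem:spr-1} applied to $G/H$; the paper's approach has the minor advantage of staying within the computational framework already set up, but is otherwise doing the same reduction in coordinates.
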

\begin{proof}
Let $\mathbb{Z}_{r_{1}}\times\cdots\times\mathbb{Z}_{r_{k}}$ be the
invariant factor decomposition of $G$ and $H=\{h_{1},\ldots,h_{m}\}$.
Then $G\cong\ag(A)$ with $A=\diag(r_{1},\ldots,r_{k})$ and $G/H\cong\ag(B)\cong\ag(\tsnf(B))$
with
\[
B=\left[\begin{array}{c}
A\\
\hline \vphantom{\int^{\int}}\hat{H}
\end{array}\right]\text{ and }\hat{H}=\left[\begin{array}{c}
\widehat{\ h_{1}}\\
\hline \vdots\\
\hline \vphantom{\int^{\int^{\int}}}\widehat{h_{m}}
\end{array}\right].
\]
Note that $\widehat{h_{i}}=\left[\begin{matrix}\widehat{h_{i}}_{_{1}} & \cdots & \widehat{h_{i}}_{_{m}}\end{matrix}\right]$.
Applying elementary integer row and column operations gives
\[
G/\langle H\cup\{g\}\rangle\cong\ag\left[\begin{array}{c}
B\\
\hline \hat{g}
\end{array}\right]\cong\ag\left[\begin{array}{c|c}
I & 0\\
\hline 0 & \tsnf(B)\\
\hline 0 & \tilde{g}
\end{array}\right]\cong\ag\left[\begin{array}{c}
\tsnf(B)\\
\hline \tilde{g}
\end{array}\right]
\]
for some $\tilde{g}$. \textcolor{black}{So, $G/\langle H\cup\{g\}\rangle$
is isomorphic to a quotient of $G/H$ by a cyclic subgroup.}\textcolor{blue}{{}
}The result now follows as in the proof of Lemma~\ref{lem:spr-1}. 
\end{proof}
\begin{figure}
\begin{tabular}{ccccccc}
\includegraphics{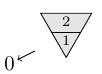} &  & \includegraphics{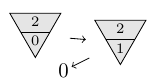} &  & \includegraphics{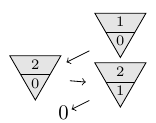} &  & \includegraphics{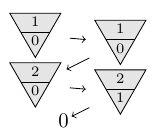}\tabularnewline
(a) $\spr(G)=1$ &  & (b) $\spr(G)=2$ &  & (c) $\spr(G)=3$ &  & (d) $\spr(G)\ge4$\tabularnewline
\end{tabular}

\caption{\label{fig:OddAbelian}Simplified structure diagrams for $\protect\gen(G)$
with $G=\mathbb{Z}_{p_{1}^{r_{1}}}\times\cdots\times\mathbb{Z}_{p_{k}^{r_{k}}}$
and odd primes $p_{1},\ldots,p_{k}$.}
\end{figure}

\begin{prop}
\label{prop:abelianGEN-oddPrimes}If $G=\mathbb{Z}_{p_{1}^{r_{1}}}\times\cdots\times\mathbb{Z}_{p_{k}^{r_{k}}}$
is an abelian group with odd primes $p_{1},\ldots,p_{k}$, then the
simplified structure diagram of $\gen(G)$ is one of the diagrams
shown in Figure~\ref{fig:OddAbelian}, and hence
\[
\gen(G)=\begin{cases}
*0, & \spr(G)=0\\
*2, & 1\le\spr(G)\le2\\
*1, & 3\le\spr(G)
\end{cases}.
\]
\end{prop}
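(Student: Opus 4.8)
The plan is to mimic the structure of the proof of Proposition~\ref{prop:genTypes} and the dihedral achievement proofs: partition $\mathcal{Y}$ into a handful of sets, guess the $\type$ of each, and verify the guesses by a single structural induction that simultaneously checks the partition blocks are exactly the type-equivalence classes. The crucial structural feature to exploit is the spread $\spr$. Since all $p_i$ are odd, every subgroup of $G$ has odd order, so every structure class $X_I$ is odd, i.e.\ $\pty(I)=1$; this immediately eliminates the parity bookkeeping and means only the second and third coordinates of each $\type$ are in play. The natural stratification is by $\spr(G/\langle I\rangle)$: when it is $0$ the class is terminal ($I$ already generates $G$), when it is $1$ the class is semi-terminal (one more well-chosen element finishes the game, by Lemma~\ref{lem:lnf}), and when it is $\ge 2$ the class is non-terminal, because by Proposition~\ref{prop:spr} adding a single element drops the spread by at most one, so $\langle I\cup\{g\}\rangle$ can never be all of $G$.

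First I would record the easy facts: $X_G$ is the unique terminal class with $\type(X_G)=(1,0,0)$; $X_I$ is semi-terminal iff $\spr(G/\langle I\rangle)=1$; and $X_I$ is non-terminal iff $\spr(G/\langle I\rangle)\ge 2$. The case $\spr(G)=0$ is the trivial group, already handled by the example giving $\gen(\mathbb{Z}_1)=*0$. For $\spr(G)=1$ the group is cyclic of odd order and the result is $\gen(\mathbb{Z}_n)=*2$ for odd $n$, already established in Section~\ref{sec:Cyclic}; I would either cite that or observe that the structure diagram has just $X_{\Phi(G)}$ (semi-terminal, since every nontrivial proper quotient of a cyclic group of odd order is still cyclic) feeding $X_G$, forcing $\type(X_{\Phi(G)})=(1,2,0)$. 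The substantive cases are $\spr(G)=2$ and $\spr(G)\ge 3$.

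For $\spr(G)\ge 3$: set $\mathcal{S}_1=\{X_G\}$, $\mathcal{S}_2$ the semi-terminal classes, and $\mathcal{S}_3$ the non-terminal classes, and claim $\type$ is $(1,0,0)$, $(1,2,1)$, $(1,\ast,\ast)$ respectively — I will need to pin down the non-terminal type by the induction, and the table in Proposition~\ref{prop:genTypes} suggests it stabilizes at $(1,2,0)$ or similar, ultimately yielding nim-number $1$. The key input is that a non-terminal class $X_I$ (with $\spr(G/\langle I\rangle)=d\ge 2$) always has options of every "lower" kind: choosing $g$ as in Lemma~\ref{lem:lnf} gives $\spr(G/\langle I\cup\{g\}\rangle)=d-1$, and iterating shows the option-type set of a non-terminal class with $d\ge 3$ contains a non-terminal type, while a non-terminal class with $d=2$ has only semi-terminal options (spread drops to exactly $1$, never to $0$, by Proposition~\ref{prop:spr}). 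This two-tier behavior among non-terminal classes — $d=2$ versus $d\ge 3$ — may require splitting $\mathcal{S}_3$ into two blocks to make the induction close cleanly, exactly as $\mathcal{S}_4,\mathcal{S}_5$ were split in the $\gen(\mathbb{D}_{4k+2})$ proof; I would check the $\mex$ computations and collapse blocks only if they turn out type-equivalent. The analogous but shorter argument for $\spr(G)=2$ uses that $X_{\Phi(G)}$ is non-terminal with $\spr(G/\langle\Phi(G)\rangle)=2$ (since $\Phi(G)$ has index divisible by at most... — here one invokes Proposition~\ref{prop:FrattiniProduct} to see $G/\Phi(G)\cong\mathbb{Z}_{p_1}^2\times\cdots$ has spread $2$), so all its options are semi-terminal, giving $\otype(X_{\Phi(G)})=\{(1,2,1)\}$ and hence $\type(X_{\Phi(G)})=(1,2,0)$, i.e.\ nim-number $2$.

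The main obstacle I anticipate is not the group theory but the combinatorial reconciliation: making sure the proposed partition blocks are \emph{exactly} the type-equivalence classes, which forces me to verify both that same-block classes have equal $\Otype$ and that different-block classes do not — the latter is what dictates how finely $\mathcal{S}_3$ must be subdivided. Concretely, I must confirm that for $\spr(G)\ge 3$ the non-terminal classes with $\spr(G/\langle I\rangle)=2$ and those with $\spr(G/\langle I\rangle)\ge 3$ either genuinely share an $\Otype$ or get separate blocks, and then run the $\mex$ recursion $\type(X_I)=(1,\mex\{c\},\mex\{b\})$ over $\otype$ to see that the second coordinate stabilizes at the value producing $*1$ rather than $*2$. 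I would nail this down by first drawing the small diagrams in Figure~\ref{fig:OddAbelian} backward from the claimed answer and then checking each arrow is realized by an explicit element via Lemma~\ref{lem:lnf} and each non-arrow is forbidden via Proposition~\ref{prop:spr}.
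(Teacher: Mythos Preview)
Your plan is essentially the paper's: stratify $\mathcal{Y}$ by $\spr(G/I)$, use Lemma~\ref{lem:lnf} to produce an option with spread exactly one lower and Proposition~\ref{prop:spr} to forbid drops of two or more, then run the $\mex$ recursion. The paper carries this out with four types $t_0=(1,0,0)$, $t_1=(1,2,1)$, $t_2=(1,2,0)$, $t_3=(1,1,0)$ for $\spr(G/I)=0,1,2,\ge3$ respectively, which is precisely the split you anticipate needing.

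One slip to fix: your claim that a class with $\spr(G/I)=2$ ``has only semi-terminal options'' is false, since spread can stay at $2$ as well as drop to $1$. For instance, in $G=\mathbb{Z}_3\times\mathbb{Z}_3\times\mathbb{Z}_5$ the intersection subgroup $I=\{0\}\times\{0\}\times\mathbb{Z}_5$ has $\spr(G/I)=2$ and $X_I$ is a genuine option of $X_{\Phi(G)}$. This does not hurt the outcome---the paper simply allows $\otype(X_I)\in\{\{t_{j-1}\},\{t_{j-1},t_j\}\}$ at each level $j\in\{1,2,3\}$ and checks that both possibilities yield $\type(X_I)=t_j$---so your planned $\mex$ verification would have caught and absorbed it.
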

\begin{proof}
The Frattini subgroup is $\Phi(G)\cong\mathbb{Z}_{p_{1}^{r_{1}-1}}\times\cdots\times\mathbb{Z}_{p_{k}^{r_{k}-1}}$\textcolor{red}{{}
}\textcolor{black}{by Propositions~\ref{prop:FrattiniProduct} and
\ref{prop:FrattiniCyclic}}, and so $G/\Phi(G)\cong\mathbb{Z}_{p_{1}}\times\cdots\times\mathbb{Z}_{p_{k}}$.
\textcolor{black}{One can show that this implies that every proper
subgroup of $G$ containing $\Phi(G)$ is an intersection subgroup.
}We use structural induction on the structure classes to show that
the simplified structure diagram is only dependent on $\spr(G)$ as
shown in Figure~\ref{fig:OddAbelian}, and 
\[
\type(X_{I})=\begin{cases}
t_{0}:=(1,0,0), & \spr(G/I)=0\\
t_{1}:=(1,2,1), & \spr(G/I)=1\\
t_{2}:=(1,2,0), & \spr(G/I)=2\\
t_{3}:=(1,1,0), & \spr(G)\ge\spr(G/I)\ge3
\end{cases}.
\]
The statement is clearly true when $\spr(G/I)=0$, that is, $I=G$.
Assume $\spr(G/I)=i\ge1$. Then $X_{I}$ has an option $X_{K}$ such
that $\spr(G/K)=i-1$ by Lemma~\ref{lem:lnf} \textcolor{black}{and
the proof of Proposition~\ref{prop:spr}.} Now, suppose $g\in G\setminus I$.
By Proposition~\ref{prop:spr}, $\spr(G/\langle I\cup\{g\}\rangle)\geq\spr(G/I)-1$,
which implies that if $X_{J}$ is an option of $X_{I}$, then $\spr(G/J)\in\{\spr(G/I),\spr(G/I)-1\}$. 

First, assume $j:=\spr(G/I)\in\{1,2,3\}$. Then $\otype(X_{I})=\{t_{j-1}\}$
or $\otype(X_{I})=\{t_{j-1},t_{j}\}$ by induction. In either case,
$\type(X_{I})=t_{j}$, and so $\Otype(X_{I})=\{t_{j-1},t_{j}\}$.
Now, assume $\spr(G/I)\ge4$. Then $\otype(X_{I})=\{t_{3}\}$ by induction.
Hence $\type(X_{I})=t_{3}$, and so $\Otype(X_{I})=\{t_{3}\}$. 
\end{proof}
\begin{figure}
\begin{tabular}{ccccccccc}
\includegraphics{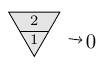} &  & \includegraphics{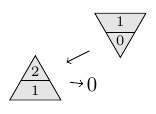} &  & \includegraphics{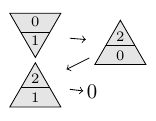} &  & \includegraphics{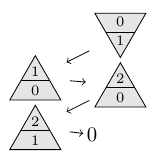} &  & \includegraphics{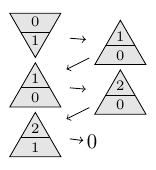}\tabularnewline
(a) $n=1$ &  & (b) $n=2$ &  & (c) $n=3$ &  & (d) $n=4$ &  & (e) $n\ge5$\tabularnewline
\end{tabular}

\caption{\label{fig:GenZ2ton}Simplified structure diagrams for $\protect\gen(\mathbb{Z}_{2}^{n})$.}
\end{figure}

\begin{example}
It is easy to check that the simplified structure diagram of $\gen(\mathbb{Z}_{2}^{n})$,
shown in Figure~\ref{fig:GenZ2ton}, follows a pattern somewhat similar
to that of $\gen(\mathbb{Z}_{p_{1}^{r_{1}}}\times\cdots\times\mathbb{Z}_{p_{k}^{r_{k}}})$,
shown in Figure~\ref{fig:OddAbelian}. Note that the only odd order
subgroup is the Frattini subgroup, which is the trivial subgroup.
\end{example}
The following result of \cite{Barnes} is an easy consequence of the
Third Isomorphism Theorem.
\begin{lem}
\label{lem:semiTerminalCyclic}If $I$ is a proper intersection subgroup
of the abelian group $G$, then $X_{I}$ is semi-terminal in $\gen(G)$
if and only if $G/I$ is cyclic.
\end{lem}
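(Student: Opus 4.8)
The plan is to unwind the definition of \emph{semi-terminal} and reduce the claim to the elementary observation that a single coset generates $G/I$ exactly when it lifts to an element $g$ with $\langle I\cup\{g\}\rangle=G$. The only technical Lemma needed is the correspondence (Third Isomorphism) theorem, as the paper already indicates.

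First I would record the bookkeeping at the start. By Definition~\ref{def:structureDigraph} (applied to $\gen(G)$) together with the description of $X_G$, the structure class $X_I$ is semi-terminal precisely when $\opt(I)\cap X_G\neq\emptyset$, i.e. when there is $g\in G\setminus I$ with $I\cup\{g\}\in X_G$. Now $I\cup\{g\}\in X_G$ means $\langle I\cup\{g\}\rangle=G$ while $\langle(I\cup\{g\})\setminus\{s\}\rangle\neq G$ for some $s$; choosing $s=g$ and using $\langle I\rangle=I\neq G$ (here $I$ is proper), one sees that $X_I$ is semi-terminal if and only if there exists $g\in G\setminus I$ with $\langle I\cup\{g\}\rangle=G$.

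For the forward direction, suppose such a $g$ exists. Since $G$ is abelian, $I$ is normal in $G$, and the canonical projection $\pi\colon G\to G/I$ carries $\langle I\cup\{g\}\rangle$ onto $\langle\pi(g)\rangle$; hence $G/I=\langle\pi(g)\rangle$ is cyclic. For the converse, suppose $G/I$ is cyclic, say $G/I=\langle gI\rangle$ for some $g\in G$. Because $I$ is proper, $G/I$ is nontrivial, so $gI\neq I$ and thus $g\in G\setminus I$. The subgroup $H:=\langle I\cup\{g\}\rangle$ contains $I=\ker\pi$ and satisfies $\pi(H)=G/I$, so by the correspondence theorem $H=\pi^{-1}(G/I)=G$. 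Thus $I\cup\{g\}\in X_G$ with this $g$, whence $X_G\in\opt(X_I)$ and $X_I$ is semi-terminal.

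I do not expect any real obstacle here: the content is entirely routine group theory once the definition of $X_G$ is unpacked. The one spot deserving a sentence of care is the first paragraph, where the minimality clause in the definition of $X_G$ must be checked (with the deleted element taken to be $g$) so that membership in $X_G$ collapses to the single condition $\langle I\cup\{g\}\rangle=G$.
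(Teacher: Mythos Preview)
Your proof is correct and follows exactly the route the paper indicates: the paper gives no proof at all beyond attributing the lemma to Barnes and remarking that it is ``an easy consequence of the Third Isomorphism Theorem,'' and your argument is precisely the standard unpacking of that remark via the correspondence theorem. Your care in verifying the minimality clause of $X_G$ (by deleting $g$ and using $\langle I\rangle=I\neq G$) is appropriate and fills in a detail the paper leaves implicit.
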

\begin{figure}
\begin{tabular}{ccc}
\includegraphics{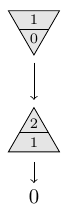} &  & \includegraphics{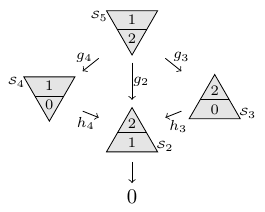}\tabularnewline
(a) $\gcd(m,k)=1$ &  & (b) $\gcd(m,k)\ne1$\tabularnewline
\end{tabular}\caption{\label{fig:mkodd}Simplified structure diagram for $\protect\gen(\mathbb{Z}_{2}\times\mathbb{Z}_{2}\times\mathbb{Z}_{m}\times\mathbb{Z}_{k})$
with odd $m$ and $k$.}
\end{figure}

\begin{prop}
\label{prop:abelianGEN-mkRelPrime}If $G=\mathbb{Z}_{2}\times\mathbb{Z}_{2}\times\mathbb{Z}_{m}\times\mathbb{Z}_{k}$
such that $m$ and $k$ are odd and relatively prime, then the simplified
structure diagram of $\gen(G)$ is the one shown in Figure~\ref{fig:mkodd}(a),
and hence $\gen(G)=*1$.
\end{prop}
\begin{proof}
Define the following collection of sets that form a partition of $\mathcal{Y}$:
\[
\begin{aligned}\mathcal{S}_{1} & :=\{X_{I}\in\mathcal{Y}\mid X_{I}\text{ is terminal}\};\\
\mathcal{S}_{2} & :=\{X_{I}\in\mathcal{Y}\mid X_{I}\text{ is semi-terminal}\};\\
\mathcal{S}_{3} & :=\{X_{I}\in\mathcal{Y}\mid X_{I}\text{ is non-terminal}\}.
\end{aligned}
\]
We use structural induction on the structure classes to show that
these sets are nonempty and are the type equivalence classes of the
structure classes, and that
\[
\type(X_{I})=\begin{cases}
(0,0,0), & X_{I}\in\mathcal{S}_{1}\\
(0,1,2), & X_{I}\in\mathcal{S}_{2}\\
(1,1,0), & X_{I}\in\mathcal{S}_{3}
\end{cases}.
\]

First, note that since the orders of $\mathbb{Z}_{2}\times\mathbb{Z}_{2}$
and $\mathbb{Z}_{m}\times\mathbb{Z}_{k}$ are relatively prime, every
subgroup is of the form $I=H\times K$ for some $H\leq\mathbb{Z}_{2}\times\mathbb{Z}_{2}$
and $K\leq\mathbb{Z}_{m}\times\mathbb{Z}_{k}$ by Proposition~\ref{prop:SubgroupRelPrime}. 

We show that if $I$ is an intersection subgroup, then $X_{I}$ is
even if and only if $X_{I}$ is semi-terminal. Suppose that $X_{I}$
is even. Then $H$ is nontrivial, which implies that $(\mathbb{Z}_{2}\times\mathbb{Z}_{2})/H$
is isomorphic to $\langle0\rangle$ or $\mathbb{Z}_{2}$. Since $m$
and $k$ are relatively prime, $\mathbb{Z}_{m}\times\mathbb{Z}_{k}$
is cyclic, and hence $(\mathbb{Z}_{m}\times\mathbb{Z}_{k})/K$ is
cyclic, as well. This implies that 
\[
G/I=G/(H\times K)\cong(\mathbb{Z}_{2}\times\mathbb{Z}_{2})/H\times(\mathbb{Z}_{m}\times\mathbb{Z}_{k})/K
\]
is cyclic since the orders of $(\mathbb{Z}_{2}\times\mathbb{Z}_{2})/H$
and $(\mathbb{Z}_{m}\times\mathbb{Z}_{k})/K$ are relatively prime
and each group is cyclic. Hence $X_{I}$ is semi-terminal by Lemma~\ref{lem:semiTerminalCyclic}.
On the other hand, if $X_{I}$ is semi-terminal, reversing the above
argument presents no difficulties, so $X_{I}$ is even.

It is clear that $\mathcal{S}_{1}=\{X_{G}\}\neq\emptyset$ and $\type(X_{G})=(0,0,0)$. 

The maximal subgroup $R=\langle0\rangle\times\mathbb{Z}_{2}\times\mathbb{Z}_{m}\times\mathbb{Z}_{k}$
is even. Hence $X_{R}\in\mathcal{S}_{2}$, and so $\mathcal{S}_{2}\neq\emptyset$.
Let $X_{I}\in\mathcal{S}_{2}$. Then by the above, $X_{I}$ is even.
Hence $\otype(X_{I})$ is either $\{(0,0,0)\}$ or $\{(0,0,0),(0,1,2)\}$
by induction. In either case, $\type(X_{I})=(0,1,2)$, and so $\Otype(X_{I})=\{(0,0,0),(0,1,2)\}$.

For the final case, observe that $X_{\Phi(G)}\in\mathcal{S}_{3}$
since $\Phi(G)$ is odd by Propositions~\ref{prop:FrattiniProduct}
and \ref{prop:FrattiniCyclic}, and so $\mathcal{S}_{3}\neq\emptyset$.
Let $X_{I}\in\mathcal{S}_{3}$. Then $X_{I}$ must be odd. Moreover,
$I\cup\{(1,0,0,0)\}$ is an option of $I$ that lies in a semi-terminal
structure class. Hence $\otype(X_{I})$ is either $\{(0,1,2)\}$ or
$\{(0,1,2),(1,1,0)\}$ by induction. In either case, $\type(X_{I})=(1,1,0)$,
and hence $\Otype(X_{I})=\{(0,1,2),(1,1,0)\}$.

It follows that the simplified structure diagram of $\gen(G)$ is
the one shown in Figure~\ref{fig:mkodd}(a), and so $\gen(G)=*1$.
\end{proof}
\begin{prop}
\label{prop:abelianGEN-mkNotRelPrime}If $G=\mathbb{Z}_{2}\times\mathbb{Z}_{2}\times\mathbb{Z}_{m}\times\mathbb{Z}_{k}$
such that $m$ and $k$ are odd and not relatively prime, then the
simplified structure diagram of $\gen(G)$ is the one shown in Figure~\ref{fig:mkodd}(b),
and hence $\gen(G)=*1$.
\end{prop}
\begin{proof}
Define the following collection of sets that form a partition of $\mathcal{Y}$:
\[
\begin{aligned}\mathcal{S}_{1} & :=\{X_{I}\in\mathcal{Y}\mid X_{I}\text{ is terminal}\};\\
\mathcal{S}_{2} & :=\{X_{I}\in\mathcal{Y}\mid X_{I}\text{ is semi-terminal}\};\\
\mathcal{S}_{3} & :=\{X_{I}\in\mathcal{Y}\mid X_{I}\text{ is even and non-terminal}\};\\
\mathcal{S}_{4} & :=\{X_{I}\in\mathcal{Y}\mid X_{I}\text{ is odd and non-terminal without any even non-terminal option}\};\\
\mathcal{S}_{5} & :=\{X_{I}\in\mathcal{Y}\mid X_{I}\text{ is odd and non-terminal with an even non-terminal option}\}.
\end{aligned}
\]
We use structural induction on the structure classes to show that
these sets are nonempty and are the type equivalence classes of the
structure classes and that
\[
\type(X_{I})=\begin{cases}
(0,0,0), & X_{I}\in\mathcal{S}_{1}\\
(0,1,2), & X_{I}\in\mathcal{S}_{2}\\
(0,0,2), & X_{I}\in\mathcal{S}_{3}\\
(1,1,0), & X_{I}\in\mathcal{S}_{4}\\
(1,1,2), & X_{I}\in\mathcal{S}_{5}
\end{cases}.
\]

It is clear that $\mathcal{S}_{1}=\{X_{G}\}\ne\emptyset$ and $\type(X_{G})=(0,0,0)$.

Next, consider the even maximal subgroup $R=\langle0\rangle\times\mathbb{Z}_{2}\times\mathbb{Z}_{m}\times\mathbb{Z}_{k}$.
Then $X_{R}\in\mathcal{S}_{2}$ since $\langle R\cup\{(1,0,0,0)\}\rangle=G$,
and so $\mathcal{S}_{2}\neq\emptyset$. Let $X_{I}\in\mathcal{S}_{2}$.
Note that the orders of $\mathbb{Z}_{2}\times\mathbb{Z}_{2}$ and
$\mathbb{Z}_{m}\times\mathbb{Z}_{k}$ are relatively prime, so $I=H\times K$
for some $H\leq\mathbb{Z}_{2}\times\mathbb{Z}_{2}$ and $K\leq\mathbb{Z}_{m}\times\mathbb{Z}_{k}$
by Proposition~\ref{prop:SubgroupRelPrime}. Since $X_{I}$ is semi-terminal,
the quotient
\[
G/I=G/(H\times K)\cong(\mathbb{Z}_{2}\times\mathbb{Z}_{2})/H\times(\mathbb{Z}_{m}\times\mathbb{Z}_{k})/K
\]
is cyclic by Lemma~\ref{lem:semiTerminalCyclic}. This happens only
if
\[
H\in\{\langle0\rangle\times\mathbb{Z}_{2},\mathbb{Z}_{2}\times\langle0\rangle,\langle(1,1)\rangle,\mathbb{Z}_{2}\times\mathbb{Z}_{2}\},
\]
which shows that $X_{I}$ must be even. Hence $\otype(X_{I})$ is
either $\left\{ (0,0,0)\right\} $ or $\left\{ (0,0,0),(0,1,2)\right\} $
by induction since $X_{I}$ is semi-terminal. In either case, $\type(X_{I})=(0,1,2)$,
and hence $\Otype(X_{I})=\left\{ (0,0,0),(0,1,2)\right\} $.

For the third case, consider the even subgroups $R_{1}=\mathbb{Z}_{2}\times\langle0\rangle\times\langle0\rangle\times\langle0\rangle$,
$R_{2}=\langle0\rangle\times\mathbb{Z}_{2}\times\langle0\rangle\times\langle0\rangle$,
and $R_{3}=\langle(1,1)\rangle\times\langle0\rangle\times\langle0\rangle$.
Let $X_{I_{i}}$ be the structure class containing $R_{i}$. Since
$m$ and $k$ are not relatively prime, we can choose $m'$ and $k'$
such that $m/m'=k/k'$ is a prime. Then $I_{i}$ is contained in the
sets
\[
\begin{aligned}M & :=\mathbb{Z}_{2}\times\mathbb{Z}_{2}\times\langle m/m'\rangle\times\mathbb{Z}_{k}\cong\mathbb{Z}_{2}\times\mathbb{Z}_{2}\times\mathbb{Z}_{m'}\times\mathbb{Z}_{k}\\
N & :=\mathbb{Z}_{2}\times\mathbb{Z}_{2}\times\mathbb{Z}_{m}\times\langle k/k'\rangle\cong\mathbb{Z}_{2}\times\mathbb{Z}_{2}\times\mathbb{Z}_{m}\times\mathbb{Z}_{k'}.
\end{aligned}
\]
By Proposition~\ref{prop:maximalIndex}, $M$ and $N$ are maximal
subgroups. Since $M\cap N\cong\mathbb{Z}_{2}\times\mathbb{Z}_{2}\times\mathbb{Z}_{m'}\times\mathbb{Z}_{k'}$,
$G/M\cap N\cong\mathbb{Z}_{m/m'}^{2}$ is not cyclic. Hence each $G/I_{i}$
is not cyclic. So each $X_{I_{i}}$ is even and non-terminal by Lemma~\ref{lem:semiTerminalCyclic}.
Thus each $R_{i}$ is an element of an even non-terminal structure
class, which shows that $\mathcal{S}_{3}\neq\emptyset$. Let $X_{I}\in\mathcal{S}_{3}$.
We show that $X_{I}$ has an option in $\mathcal{S}_{2}$. Since $I$
is even, at least one of $R_{1}$, $R_{2}$, or $R_{3}$ is a subgroup
of $I$. Let $h_{3}=(0,0,1,0)$. Since $\gcd(2,k)=1$, each $G/\langle R_{i}\cup\{h_{3}\}\rangle\cong\mathbb{Z}_{2}\times\mathbb{Z}_{m}$
is cyclic. This implies that $G/\langle I\cup\{h_{3}\}\rangle$ is
cyclic, and hence $I\cup\{h_{3}\}$ is an element of an even semi-terminal
structure class. It is clear that every option of $X_{I}$ is in $\mathcal{S}_{2}\cup\mathcal{S}_{3}$.
Therefore, $\otype(X_{I})$ is either $\left\{ (0,1,2)\right\} $
or $\left\{ (0,1,2),(0,0,2)\right\} $ by induction. In either case,
$\type(X_{I})=(0,0,2)$, and so $\Otype(X_{I})=\{(0,1,2),(0,0,2)\}$.

For the fourth case, consider the odd subgroup $Q:=\langle0\rangle\times\langle0\rangle\times\mathbb{Z}_{m}\times\langle0\rangle$.
Then $Q$ is contained in an odd structure class $X_{J}$ since $Q$
is a subset of the maximal subgroups $\langle0\rangle\times\mathbb{Z}_{2}\times\mathbb{Z}_{m}\times\mathbb{Z}_{k}$
and $\mathbb{Z}_{2}\times\langle0\rangle\times\mathbb{Z}_{m}\times\mathbb{Z}_{k}$.
We show that $X_{J}\in\mathcal{S}_{4}$. For a contradiction, assume
that $Q\cup\{h\}$ is in a structure class inside $\mathcal{S}_{3}$
for some $h$. Then $\langle Q\cup\{h\}\rangle\cong\mathbb{Z}_{2}\times\mathbb{Z}_{m}\times U$,
where $U\leq\mathbb{Z}_{k}$. This implies that $G/\langle Q\cup\{h\}\rangle\cong\mathbb{Z}_{2}\times(\mathbb{Z}_{k}/U)$,
which is cyclic since $k$ is odd. Then $Q\cup\{h\}$ is in a structure
class in $\mathcal{S}_{2}$ by Lemma~\ref{lem:semiTerminalCyclic}.
This is a contradiction, and so $\mathcal{S}_{4}\neq\emptyset$. Now
consider $X_{I}\in\mathcal{S}_{4}$. If $h_{4}$ is any element of
$G$ with even order, then $I\cup\{h_{4}\}$ is contained in an even
semi-terminal structure class by the definition of $\mathcal{S}_{4}$.
So $X_{I}$ has an option in $\mathcal{S}_{2}$. Next, we show that
$X_{I}$ has no option in $\mathcal{S}_{5}$. For a contradiction,
suppose $X_{I}$ has an option $X_{J}$ in $\mathcal{S}_{5}$. By
the definition of $\mathcal{S}_{5}$, $X_{J}$ has an option $X_{H}$
in $\mathcal{S}_{3}$. Let $t$ be an element of order 2 in $H$ and
let $I\cup\{t\}\in X_{K}$. Then $K$ is even and $K\leq H\leq G$.
So $X_{K}$ is an option of $X_{I}$ in $\mathcal{S}_{3}$, which
is a contradiction. Hence any option of $X_{I}$ is in $\mathcal{S}_{2}$
or $\mathcal{S}_{4}$. Thus, $\otype(X_{I})$ is either $\left\{ (0,1,2)\right\} $
or $\left\{ (0,1,2),(1,1,0)\right\} $ by induction. In either case,
$\type(X_{I})=(1,1,0)$ and so $\Otype(X_{I})=\{(0,1,2),(1,1,0)\}$.

To show that $\mathcal{S}_{5}\neq\emptyset$, consider the empty position
$\emptyset$, which is an element of the odd structure class $X_{\Phi(G)}$.
Since $G$ is not cyclic, $X_{\Phi(G)}$ must be a non-terminal structure
class. Let $g_{3}:=(1,0,0,0)$. The empty position has the option
$\{g_{3}\}$ that belongs to a structure class in $\mathcal{S}_{3}$
by Lemma~\ref{lem:semiTerminalCyclic}, since
\[
G/\langle\{g_{3}\}\rangle=G/(\mathbb{Z}_{2}\times\langle0\rangle\times\langle0\rangle\times\langle0\rangle)\cong\mathbb{Z}_{2}\times\mathbb{Z}_{m}\times\mathbb{Z}_{k}
\]
is not cyclic as $\gcd(m,k)\ne1$. Hence $X_{\Phi(G)}\in\mathcal{S}_{5}$.
Next, let $X_{I}\in\mathcal{S}_{5}$. Let $g_{2}=(1,0,1,0)$ and $g_{4}=(0,0,1,0)$.
Then $I\cup\{g_{2}\}$ belongs to an even semi-terminal structure
class by Lemma~\ref{lem:semiTerminalCyclic} since $G/\langle I\cup\{g_{2}\}\rangle$
is cyclic. This implies that $X_{I}$ has an option in $\mathcal{S}_{2}$.
Lastly, we will show that $I\cup\{g_{4}\}$ is an option of $I$ in
a structure class in $\mathcal{S}_{4}$. We see that $Q$ is a subgroup
of $\langle I\cup\{g_{4}\}\rangle$. We know from the fourth case
that $\langle g_{4}\rangle=Q\in X_{J}\in\mathcal{S}_{4}$. Hence the
odd structure class containing $I\cup\{g_{4}\}$ belongs to $\mathcal{S}_{4}$.
As a consequence, $\otype(X_{I})$ is either $\{(0,0,2),(1,1,0),(0,1,2)\}$
or $\{(0,0,2),(1,1,0),(0,1,2),(1,1,2)\}$ by induction. In either
case, $\type(X_{I})=(1,1,2)$, and hence $\Otype(X_{I})=\{(0,0,2),(1,1,0),(0,1,2),(1,1,2)\}$.

It follows that the simplified structure diagram of $\gen(G)$ is
the one shown in Figure~\ref{fig:mkodd}(b), which implies that $\gen(G)=*1$.
\end{proof}
It is interesting to notice that the simplified structure diagram
from the previous theorem is the same as that of $\gen(\mathbb{D}_{4k+2})$
shown in Figure~\ref{fig:structureDiagsDihedral}(f).
\begin{cor}
If $G$ is a finite abelian group, then
\[
\gen(G)=\begin{cases}
*2, & |G|\text{ is odd and }1\le\spr(G)\le2\\
*1, & |G|\text{ is odd and }\spr(G)\ge3\\
*2, & G\cong\mathbb{Z}_{2}\\
*1, & G\cong\mathbb{Z}_{4k}\text{ with }k\ge1\\
*4, & G\cong\mathbb{Z}_{4k+2}\text{ with }k\ge1\\
*1, & G\cong\mathbb{Z}_{2}\times\mathbb{Z}_{2}\times\mathbb{Z}_{m}\times\mathbb{Z}_{k}\text{ for }m,k\text{ odd}\\
*0, & \text{else}
\end{cases}.
\]
\end{cor}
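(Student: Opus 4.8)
The plan is to run through the finite abelian groups case by case, using the fundamental theorem of finite abelian groups to match each group with a result already established in the paper. First I would treat the groups of odd order: by the primary decomposition such a group is isomorphic to $\mathbb{Z}_{p_1^{r_1}}\times\cdots\times\mathbb{Z}_{p_k^{r_k}}$ with the $p_i$ odd primes, and its spread equals the largest multiplicity occurring among $p_1,\dots,p_k$, which is the invariant-factor value of $\spr$. So Proposition~\ref{prop:abelianGEN-oddPrimes} applies directly and gives $\gen(G)=*2$ when $1\le\spr(G)\le 2$, $\gen(G)=*1$ when $\spr(G)\ge 3$, and $\gen(G)=*0$ for the trivial group.

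Next I would handle the even-order groups named in the statement. The cyclic groups $\mathbb{Z}_2$, $\mathbb{Z}_{4k}$, and $\mathbb{Z}_{4k+2}$ (with $k\ge 1$) are covered by the corollary at the end of Section~\ref{sec:Cyclic}, with nim-numbers $2$, $1$, and $4$ respectively. For $G\cong\mathbb{Z}_2\times\mathbb{Z}_2\times\mathbb{Z}_m\times\mathbb{Z}_k$ with $m,k$ odd, one splits on whether $\gcd(m,k)=1$: Proposition~\ref{prop:abelianGEN-mkRelPrime} treats the coprime case and Proposition~\ref{prop:abelianGEN-mkNotRelPrime} the non-coprime case, and both give $\gen(G)=*1$.

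It then remains to show $\gen(G)=*0$ for every finite abelian group not in the list above, namely the trivial group together with the even-order abelian groups that are neither cyclic nor of the form $\mathbb{Z}_2\times\mathbb{Z}_2\times\mathbb{Z}_m\times\mathbb{Z}_k$ with $m,k$ odd. Because the starting position of a game has nim-number $0$ precisely when it is a P-position, this is equivalent to showing that the second player wins $\gen(G)$ for exactly these groups, and that is precisely the outcome determined for abelian groups by Anderson and Harary~\cite{anderson.harary:achievement} and reproved by Barnes~\cite{Barnes}. (The other cases all have nonzero nim-number, hence are N-positions, which is consistent with the first player winning there.)

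The step I expect to cause the most trouble is the bookkeeping in this last paragraph: one must verify that the explicitly listed families are pairwise disjoint, that together with the ``else'' family they exhaust all finite abelian groups, and --- the delicate part --- that the complement of the listed families is exactly the class of abelian groups for which the cited outcome result declares a second-player win. A minor point to reconcile along the way is that the trivial group has odd order and spread $0$ yet nim-number $0$, so the first clause is understood to require $1\le\spr(G)\le 2$.
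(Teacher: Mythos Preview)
Your approach is essentially the paper's: apply the earlier propositions to each named family, then invoke Barnes's outcome characterization to conclude $\gen(G)=*0$ in every remaining case. The paper's proof is a terse three-line version of exactly this argument, so your added bookkeeping about disjointness, exhaustion, and the trivial-group edge case only makes explicit what the paper leaves implicit.
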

\begin{proof}
By \cite[Section 3.2]{Barnes}, the first player wins $\gen(G)$ exactly
when $G$ is cyclic, odd, or isomorphic to $\mathbb{Z}_{2}\times\mathbb{Z}_{2}\times\mathbb{Z}_{m}\times\mathbb{Z}_{k}$
with odd $m$ and $k$. We already proved the result in these cases.
In every other case the second player wins, so in these cases $\gen(G)=*0$.
\end{proof}

\section{Symmetric and alternating groups\label{sec:SymmetricAlternating}}

\begin{table}
\begin{centering}
\begin{tabular}{|c|c|c|c|c|c|c|c|c|c|c|c|}
\cline{1-4} \cline{6-12} 
$n$ & $2$ & $3$ & $\ge4$ & $\qquad$ & $n$ & $3$ & $4$ & $5$ & $6$ & $7$ & $8$\tabularnewline
\cline{1-4} \cline{6-12} 
$\dng(S_{n})$ & $*1$ & $*3$ & $*0$ &  & $\dng(A_{n})$ & $*3$ & $*3$ & $*0$ & $*0$ & $*0$ & $*0$\tabularnewline
\cline{1-4} \cline{6-12} 
\end{tabular}\bigskip
\par\end{centering}
\caption{\label{table:DNGSnJoint}Nimbers for $\protect\dng(S_{n})$ and $\protect\dng(A_{n})$.}
\end{table}

According to \cite[Section 2.3]{Barnes}, the second player wins $\dng(S_{n})$
for $n\geq4$. Hence $\dng(S_{n})=*0$ for $n\ge4$. We already studied
the remaining cases. In particular, we know that $\dng(S_{2})=\dng(\mathbb{Z}_{2})=*1$
and $\dng(S_{3})=\dng(\mathbb{D}_{3})=*3$. These results are summarized
in Table~\ref{table:DNGSnJoint}.

The $\dng(A_{n})$ games remain a bit of a mystery. There is an incomplete
analysis of $\dng(A_{n})$ in \cite[Section 2.4]{Barnes} that involves
some fairly deep group-theoretic results. By Proposition~\ref{prop:DNGodd},
we know $\dng(A_{3})=\dng(\mathbb{Z}_{3})=*1$. Some additional computer
calculated values are listed in Table~\ref{table:DNGSnJoint}.

By \cite[Section 3.4]{Barnes}, the first player wins $\gen(S_{n})$
for $n\ge5$ and $\gen(A_{n})$ for all $n\ge4$. On the other hand,
$\gen(S_{2})=\gen(\mathbb{Z}_{2})=*2$, $\gen(S_{3})=\gen(\mathbb{D}_{3})=*3$,
$\gen(A_{3})=\gen(\mathbb{Z}_{3})=*2$, and computer calculations
show that $\gen(S_{4})=*0$. 

\begin{figure}
\begin{tabular}{ccccc}
\includegraphics{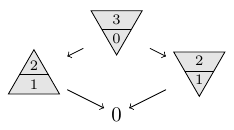} &  & \includegraphics{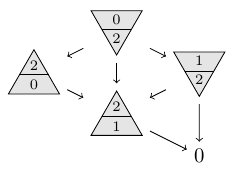} &  & \includegraphics{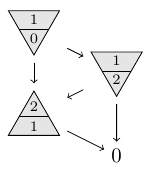}\tabularnewline
(a) $\gen(A_{4})$ &  & (b) $\gen(S_{4})$ &  & (c) $\gen(S_{n})$, $\gen(A_{n})$, $n\ge5$\tabularnewline
\end{tabular}

\vspace{2mm}
\begin{centering}
\begin{tabular}{|c|c|c|c|c|}
\hline 
$n$ & $2$ & $3$ & $4$ & $\ge5$\tabularnewline
\hline 
$\gen(S_{n})$ & $*2$ & $*3$ & $*0$ & $*1$?\tabularnewline
\hline 
$\gen(A_{n})$ &  & $*2$ & $*3$ & $*1$?\tabularnewline
\hline 
\end{tabular}
\par\end{centering}
\caption{\label{fig:Sn}Known and conjectured values and simplified structure
diagrams for $\protect\gen(S_{n})$ and $\protect\gen(A_{n})$.}
\end{figure}

\begin{conjecture}
\label{conj:SnAn}For $n\ge5$, we have $\gen(S_{n})=*1=\gen(A_{n})$.
\end{conjecture}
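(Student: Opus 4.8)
The plan is to pin down the simplified structure diagram of $\gen(S_n)$ and $\gen(A_n)$ for $n\ge 5$ --- conjecturally the one shown in Figure~\ref{fig:Sn}(c) --- and then read $\gen(G)=*1$ off the source class exactly as in Sections~\ref{sec:Dihedral} and \ref{sec:Abelian}. The first step is purely group-theoretic bookkeeping: since $\Phi(G)$ is nilpotent and normal, and for $n\ge 5$ the only proper normal subgroups of $S_n$ (resp.\ $A_n$) are $\{e\}$ and $A_n$ (resp.\ $\{e\}$), with $A_n$ non-nilpotent, we get $\Phi(S_n)=\Phi(A_n)=\mathbb{Z}_1$. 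Thus $X_{\Phi(G)}$ is the odd source class containing the even position $\emptyset$, so $\gen(G)$ equals the second coordinate of $\type(X_{\Phi(G)})$, and the whole problem reduces to computing that one type.

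The main body of the argument would mimic the dihedral and abelian proofs verbatim: partition the set $\mathcal{Y}$ of structure classes into a handful of candidate type-equivalence classes --- the terminal class $X_G$, the semi-terminal classes, and two or three flavors of non-terminal classes separated by parity and by whether they admit an even non-terminal option, matching the labels appearing in Figure~\ref{fig:Sn}(c) --- and then prove by structural induction on structure classes that every member of a given part has the prescribed $\type$ and prescribed $\Otype$. Each inductive step is the same routine $\mex$-computation used throughout Section~\ref{sec:Dihedral}, and nonemptiness of each part is checked by exhibiting explicit subgroups (e.g.\ point stabilizers and intransitive or imprimitive maximal subgroups of $S_n$ and $A_n$).

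The hard part --- and the reason this is stated as a conjecture rather than a theorem --- is supplying the group theory that makes the inductive step go through uniformly in $n$. One needs, for an arbitrary intersection subgroup $I$ of $S_n$ or $A_n$, to (i) decide whether $I$ is semi-terminal, i.e.\ whether some single element $g$ satisfies $\langle I\cup\{g\}\rangle=G$, and (ii) control which non-terminal option types $X_I$ can have. In the abelian setting Lemma~\ref{lem:semiTerminalCyclic} makes (i) trivial, but for $S_n$ and $A_n$ this demands detailed structural information about maximal subgroups (O'Nan--Scott), their pairwise intersections, and generation of the whole group by a subgroup together with one extra element --- exactly the terrain where Barnes's analysis of $\dng(A_n)$ stalls. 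I would expect the cleanest route to be: first prove that every non-terminal, non-semi-terminal structure class has a semi-terminal option, so that the small-index maximal subgroups control the top of the diagram, and then enumerate the finitely many remaining option-type combinations; establishing that first claim for all $n\ge 5$ simultaneously is where the real obstacle sits.
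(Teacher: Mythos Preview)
This statement is a \emph{conjecture} in the paper, not a theorem: the paper offers no proof at all, only computer verification for $n\in\{5,6,7,8\}$ and the remark in Section~\ref{sec:Questions} that ``structural induction on the structure classes could work to show that the simplified structure diagram of Figure~\ref{fig:Sn}(c) is correct.'' So there is no proof to compare against.

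Your proposal is not a proof either, and you say so yourself; it is an honest outline of the intended strategy together with a diagnosis of where it gets stuck. That outline is exactly the approach the paper gestures at: partition $\mathcal{Y}$ into candidate type-equivalence classes matching Figure~\ref{fig:Sn}(c), run the same structural-induction-plus-$\mex$ computation used for dihedral and abelian groups, and read off the nim-number from $\type(X_{\Phi(G)})$. Your observation that $\Phi(S_n)=\Phi(A_n)$ is trivial for $n\ge 5$ is correct and is the right starting point. Your identification of the genuine obstacle --- controlling, uniformly in $n$, which intersection subgroups are semi-terminal and which option types a non-terminal class can have, which in turn requires detailed information about maximal subgroups of $S_n$ and $A_n$ and their intersections --- is accurate and is precisely why the authors left this open. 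In short, your plan matches the paper's suggested route, and your assessment of the gap is the same as theirs.
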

We have verified the conjecture for $n\in\{5,6,7,8\}$ using our software.
The conjectured simplified structure diagrams are shown in Figure~\ref{fig:Sn}. 

\section{Further questions\label{sec:Questions}}

We conclude with a few open problems that may not be out of reach.
\begin{enumerate}
\item Is it possible to have a directed cycle in the simplified structure
digraph? This never happens for the groups we have considered, but
are there groups for which this is possible?
\item Does Conjecture~\ref{conj:SnAn} about $\gen(S_{n})$ and $\gen(A_{n})$
hold? Structural induction on the structure classes could work to
show that the simplified structure diagram of Figure~\ref{fig:Sn}(c)
is correct. 
\item What are the remaining nim-numbers for $\dng(A_{n})$? The partial
results of \cite[Theorem 2]{Barnes} are quite complicated. Settling
this question likely requires some sophisticated group theory arguments.
\item What are the nim-numbers of other families of groups? In particular,
what are the nim-numbers of generalized dihedral groups of the form
$A\rtimes\mathbb{Z}_{2}$ where $A$ is a finite abelian group?
\item Are there any general results regarding quotient groups and direct
products of groups? A positive answer would be very helpful in the
study of more complicated groups.
\item Does Conjecture~\ref{conj:nimEvenGEN01234} about the spectrum of
$\gen(G)$ hold? Is the set $\{\nim(\gen(G))\mid G\text{ is a finite group}\}$
at least bounded? 
\item Can the structure diagram approach be generalized to study $\dng$
and $\gen$ on other algebraic structures with generators (e.g., semigroups,
inverse semigroups) and compute their corresponding nim-numbers?
\item The Sprague-Grundy theory is generalized to infinite loopy games in
\cite{FraenkelPerlLoopy,SmithLoopy}. A recent description of the
theory can be found in \cite[IV.4]{SiegelBook}. Can our techniques
be generalized to find the loopy nim-numbers of some families of infinite
groups? 
\end{enumerate}
\bibliographystyle{amsplain}
\bibliography{roughdraftbib,game}

\providecommand{\bysame}{\leavevmode\hbox to3em{\hrulefill}\thinspace}
\providecommand{\MR}{\relax\ifhmode\unskip\space\fi MR }
\providecommand{\MRhref}[2]{%
  \href{http://www.ams.org/mathscinet-getitem?mr=#1}{#2}
}
\providecommand{\href}[2]{#2}
\begin{thebibliography}{10}

\bibitem{albert2007lessons}
Michael Albert, Richard Nowakowski, and David Wolfe, \emph{Lessons in play: an
  introduction to combinatorial game theory}, AMC \textbf{10} (2007), 12.

\bibitem{anderson.harary:achievement}
M.~Anderson and F.~Harary, \emph{Achievement and avoidance games for generating
  abelian groups}, Internat. J. Game Theory \textbf{16} (1987), no.~4,
  321--325.

\bibitem{Barnes}
F.~W. Barnes, \emph{Some games of {F}. {H}arary, based on finite groups}, Ars
  Combin. \textbf{25} (1988), no.~A, 21--30, Eleventh British Combinatorial
  Conference (London, 1987).

\bibitem{brandenburg:algebraicGames}
Martin Brandenburg, \emph{Algebraic games playing with groups and rings},
  Inter. J. of Game Theory (2017), 1--34.

\bibitem{Dixon}
John~D. Dixon, \emph{Problems in group theory}, Blaisdell Publishing Co. Ginn
  and Co., Waltham, Mass.-Toronto, Ont.-London, 1967.

\bibitem{Dlab1960}
Vlastimil Dlab, \emph{{The Frattini subgroups of abelian groups}}, Czechoslovak
  Mathematical Journal \textbf{10} (1960), no.~1.

\bibitem{DummitFoote}
David~S. Dummit and Richard~M. Foote, \emph{Abstract algebra}, third ed., John
  Wiley \& Sons, Inc., Hoboken, NJ, 2004.

\bibitem{WEB}
Dana~C. Ernst and N{\'a}ndor Sieben, \emph{Companion web site}, 2013,
  \texttt{http://jan.ucc.nau.edu/ns46/GroupGenGame}.

\bibitem{FraenkelPerlLoopy}
A.~S. Fraenkel and Y.~Perl, \emph{Constructions in combinatorial games with
  cycles}, Infinite and finite sets ({C}olloq., {K}eszthely, 1973; dedicated to
  {P}. {E}rd{\H o}s on his 60th birthday), {V}ol. {II}, North-Holland,
  Amsterdam, 1975, pp.~667--699. Colloq. Math. Soc. Jan\'os Bolyai, Vol. 10.

\bibitem{GAP}
The GAP~Group, \emph{{GAP -- Groups, Algorithms, and Programming, Version
  4.6.4}}, 2013.

\bibitem{Jacobson}
Nathan Jacobson, \emph{Basic algebra. {I}}, second ed., W. H. Freeman and
  Company, New York, 1985.

\bibitem{Rose}
J.S. Rose, \emph{A course on group theory}, Dover Publications, Inc., New York,
  1994.

\bibitem{SiegelBook}
Aaron~N. Siegel, \emph{Combinatorial game theory}, Graduate Studies in
  Mathematics, vol. 146, American Mathematical Society, Providence, RI, 2013.

\bibitem{SmithLoopy}
Cedric A.~B. Smith, \emph{Graphs and composite games}, J. Combinatorial Theory
  \textbf{1} (1966), 51--81.

\bibitem{Suzuki}
Michio Suzuki, \emph{Group theory. {I}}, Grundlehren der Mathematischen
  Wissenschaften, vol. 247, Springer-Verlag, Berlin, 1982.

\end{thebibliography}

\end{document}